\pgfplotsset{compat=1.14} 
\DeclareMathOperator*{\argmin}{arg\,min}
\DeclareMathOperator*{\argmax}{arg\,max}
\DeclareMathOperator*{\minimize}{minimize}
\DeclareMathOperator*{\maximize}{maximize}
\DeclareMathOperator*{\sgn}{sgn}
\newtheorem{lem}{Lemma}
\newtheorem{thm}{Theorem}
\newtheorem{pro}{Proposition}
\theoremstyle{definition}
\newtheorem{rmk}{Remark}
\newtheorem{assumption}{Assumption}
\newtheorem{exmp}{Example}
\renewenvironment{proof}[1][\proofname]{{\bfseries #1. }}{\qed} 
\newcommand{\ignore}[1]{}
\newcommand{\bA}{ \mathbf{A} }
\newcommand{\bAh}{ \mathbf{\hat{A}} }
\newcommand{\ba}{ \mathbf{a} }
\newcommand{\bab}{ \mathbf{\bar{a}} }
\newcommand{\bah}{ \mathbf{\hat{a}} }
\newcommand{\bb}{ \mathbf{b} }
\newcommand{\bc}{ \mathbf{c} }
\newcommand{\beee}{ \mathbf{e} }
\newcommand{\bff}{ \mathbf{f} }
\newcommand{\bg}{ \mathbf{g} }
\newcommand{\bs}{ \mathbf{s} }
\newcommand{\bt}{ \mathbf{t} }
\newcommand{\bu}{ \mathbf{u} }
\newcommand{\bv}{ \mathbf{v} }
\newcommand{\bw}{ \mathbf{w} }
\newcommand{\bx}{ \mathbf{x} }
\newcommand{\bxh}{ \mathbf{\hat{x}} }
\newcommand{\by}{ \mathbf{y} }
\newcommand{\bz}{ \mathbf{z} }
\newcommand{\bzero}{ \mathbf{0} }
\newcommand{\balpha}{ \boldsymbol{\alpha} }
\newcommand{\balphah}{ \boldsymbol{\hat{\alpha}} }
\newcommand{\bGamma}{ \boldsymbol{\Gamma} }
\newcommand{\bGammah}{ \boldsymbol{\hat{\Gamma}} }
\newcommand{\bpi}{ \boldsymbol{\pi} }
\newcommand{\bmu}{ \boldsymbol{\mu} }
\newcommand{\blambda}{ \boldsymbol{\lambda} }
\newcommand{\bvarphi}{ \boldsymbol{\varphi} }
\newcommand{\bxi}{ \boldsymbol{\xi} }
\newcommand{\bOmega}{ \boldsymbol{\Omega} }
\newcommand{\bTheta}{ \boldsymbol{\Theta} }
\journal{European Journal of Operational Research}
\begin{document}

\begin{frontmatter}

\title{Inverse optimization for the recovery of constraint parameters}

\author[uoftaddress]{Timothy C.Y. Chan}
\ead{tcychan@mie.utoronto.ca}

\author[uoftaddress,smhaddress]{Neal Kaw\corref{mycorrespondingauthor}}
\cortext[mycorrespondingauthor]{Corresponding author}
\ead{nkaw@mie.utoronto.ca}

\address[uoftaddress]{Department of Mechanical and Industrial Engineering, University of Toronto, Toronto, Ontario M5S 3G8, Canada}
\address[smhaddress]{Li Ka Shing Centre for Healthcare Analytics Research \& Training, St. Michael's Hospital, Toronto, Ontario M5C 3G7, Canada}

\begin{abstract}
Most inverse optimization models impute unspecified parameters of an objective function to make an observed solution optimal for a given optimization problem with a fixed feasible set. We propose two approaches to impute unspecified left-hand-side constraint coefficients in addition to a cost vector for a given linear optimization problem. The first approach identifies parameters minimizing the duality gap, while the second minimally perturbs prior estimates of the unspecified parameters to satisfy strong duality, if it is possible to satisfy the optimality conditions exactly. We apply these two approaches to the general linear optimization problem. We also use them to impute unspecified parameters of the uncertainty set for robust linear optimization problems under interval and cardinality constrained uncertainty. Each inverse optimization model we propose is nonconvex, but we show that a globally optimal solution can be obtained either in closed form or by solving a linear number of linear or convex optimization problems.
\end{abstract}

\begin{keyword}
linear programming, inverse optimization, robust optimization, parameter estimation
\end{keyword}

\end{frontmatter}

\section{Introduction}

Inverse optimization (IO) aims to determine unspecified parameters of an optimization problem (the \emph{forward problem}) that make a given observed solution optimal. To date, most of the literature has focused on determining parameters of the objective function, assuming the parameters specifying the feasible region are fixed. Depending on whether the given solution is or is not a candidate to be exactly optimal, the corresponding IO models either aim to satisfy the optimality conditions exactly (e.g., \citet{ahuja2001,iyengar2005}) or minimize a measure of suboptimality (e.g., \citet{keshavarz2011,chan2014,bertsimas2015,aswani2018,esfahani2018,chan2018trade,chan2019}), respectively. The two papers in the former category implicitly assume that we have pristine observations of an exactly optimal solution and the feasible set from which it was drawn. In contrast, the papers in the latter category allow that the observations are noisy, that the decision maker suffered from implementation error, or that the assumed forward problem is a lower-dimensional or lower-complexity approximation of the true forward problem.

Methods to impute parameters defining the feasible region, in addition to the objective function, are receiving increasing attention. A key challenge with simultaneously imputing constraint and objective function parameters is that the resulting IO models are generally nonconvex. The vast majority of these papers consider imputing only the ``right-hand-side'' of a linear optimization problem \citep{dempe2006,guler2010,chow2012,cerny2016,saez2016,saez2018,xu2018,lu2019}. Few papers have addressed the problem of imputing the unspecified parameters of the ``left-hand-side'' coefficient matrix, and those that do, exploit specific problem characteristics or assumptions to derive a tractable problem. For example, \citet{birge2017} assume partial access to both the primal and dual solutions to eliminate bilinearities in the IO problem. \citet{brucker2009} exploit the fact that the necessary and sufficient optimality conditions for their forward problem, the minimax lateness scheduling problem, can be written linearly in the parameters to be recovered.

In this paper, we investigate the IO problem of imputing left-hand-side constraint coefficients for a general (without special structure) linear optimization problem, such that an observed solution is minimally suboptimal with respect to some nonzero cost vector. Our motivation is to develop new methodology that extends the small body of work that has been completed on this problem so far. We then extend our analysis to the problem of imputing uncertainty set parameters which appear as left-hand-side coefficients in robust linear optimization problems, for two specific cases where the robust counterpart remains linear: interval uncertainty \citep{bental2000} and cardinality constrained uncertainty \citep{bertsimas2004}.

Although the robust problem can be viewed as a variant of the general problem, there is an independent motivation for studying inverse robust linear optimization. Viewed through a non-robust lens, an observed solution that is an interior point of a fixed feasible set is not a candidate to be optimal and an IO model must therefore minimize suboptimality to fit a cost vector to the data. However, if we view the forward problem as a robust optimization problem, there may be a choice of an uncertainty set, along with a cost vector, that can minimize suboptimality even further for the given interior point solution. In other words, the solution's apparent degree of suboptimality may be large because we have failed to account for uncertainty that the decision maker incorporated into her decision-making process. Such uncertainty effectively shrinks the feasible set such that the observed solution is closer to the boundary of the true, unobservable feasible region. Given the growing adoption of robust optimization in both the research and practitioner communities \citep{bertsimas2011,gabrel2014}, it may increasingly be the case that robustly optimized decisions are observed in a variety of settings and there may be a need for IO models capable of taking such observations as input. To our knowledge, only \citet{chassein2018} consider IO to recover parameters of the uncertainty set for a robust optimization problem. However, their forward problem is a binary integer program for which the linear relaxation produces an optimal solution, and which has min-max regret robustness with interval uncertainty only on the cost vector, with a fixed feasible set. Thus their IO models do not recover uncertainty set parameters controlling the size or shape of the feasible set of the forward problem.

For each of the three forward optimization problems we consider, we will formulate and analyze two IO problem variants. The primary model finds parameter values that minimize the suboptimality associated with the given observation, subject to application-driven side constraints such as non-negativity of the parameters. Note that an observed solution for a linear optimization model can always be made optimal with respect to some nonzero cost vector if the solution is on the boundary of the feasible region. Consequently, because this paper focuses on imputing parameters that determine the feasible region, exact optimality can always be guaranteed if the IO model can choose the constraint parameters without restriction. Accordingly, in the special case where there are no side constraints on the parameters to be imputed, we propose a second inverse model. The second model searches among the potentially multiple optimal solutions to choose one that minimizes the norm distance from a ``prior'' estimate of the parameters, akin to the classical approach of~\citet{ahuja2001}.

\subsection{Motivating applications}

We discuss two application settings to motivate the development of our IO models for imputing constraint coefficients. First, several IO models that impute constraint coefficients, as well as some that impute only an objective function, are motivated by applications in electricity markets. In this setting, each market participant solves an optimization problem to determine a bid which they submit to a facilitator, who in turn incorporates the bids in a market clearing optimization problem that determines prices and the consumption or production allocated to each bidder. In this context, the facilitator may impute bidders' right-hand-side constraint parameters, such as bounds on consumption, which can then be used to inform a pricing strategy that aims to maximize profit or control peak demand \citep{saez2016,saez2018,xu2018,lu2019}. Similarly, a bidder may seek to impute several unknown parameters which can be used in the process of deciding her bid. These parameters include 
cost coefficients of rivals' models \citep{chen2019}; rival bids, which are objective function coefficients in the facilitator's problem \citep{ruiz2013}; and parameters that describe the routing of energy through the network and the capacity of transmission lines, which are left-hand-side constraint parameters in the facilitator's problem \citep{birge2017}.

Second, we consider radiation therapy treatment planning as a motivation for imputing uncertainty set parameters. Radiation therapy is a cancer treatment modality that aims radiation beams from multiple angles at a tumor, with the goal of delivering an appropriate dose to the target while ensuring that neighboring organs only receive a dose within a safe limit. Treatment planners use an optimization model to determine the beam intensities, however, the quality of the treatment plan can be sensitive to uncertainties such as organ motion due to breathing, patient misalignment with the treatment machine, and the depth at which a beam delivers its peak dose \citep{bortfeld2004,lomax2008}. These uncertainties have motivated robust treatment planning models \citep{unkelbach2007,bortfeld2008} which are available in commercial systems \citep{unkelbach2018}. Independent of this recent development, there is increasing interest in knowledge-based planning, in which a database of treatment plans for historical patients is leveraged to more efficiently generate a treatment plan for a new patient \citep{chanyavanich2011,moore2011,wu2011}. As part of knowledge-based planning, IO has been used to impute objective function weights which generate historical treatment plans and which can be reused to quickly design new treatment plans, however, the work in this area has so far only focused on non-robust forward problems \citep{babier2018a,babier2018b,goli2018}. Given the availability of commerical software to generate robustly optimized treatment plans, such plans will gradually become more available for the purpose of knowledge-based planning, and it will be necessary to impute uncertainty set parameters in addition to objective function weights.

\subsection{Organization of paper and overview of main results}

To summarize, this paper presents three different forward problems, each with two inverse problem variants (see Table \ref{tab:models}). Using small numerical examples, the last subsection in each of Sections 2, 3, and 4 provides insight into the geometry associated with the solution of each model. The development of the inverse robust optimization problems are conceptually similar to the nominal inverse linear problem, so we will omit redundant details wherever possible. Unless otherwise indicated, proofs that do not appear in the body of the paper are contained in the online-only supplementary material.

\begin{table}
\begin{center}
\begin{tabular}{lcc}
\toprule
& \multicolumn{2}{c}{IO model variant} \\
\cmidrule{2-3}
Forward problem & DG & SD \\
\midrule
\multirow{2}{*}{Nominal linear optimization} & Section \ref{sec:LpDg} & Section \ref{sec:LpSd} \\
& NLO-DG & NLO-SD \\
\addlinespace[0.2cm]
Robust linear optimization with & Section \ref{sec:IuDg} & Section \ref{sec:IuSD} \\
interval uncertainty & RLO-IU-DG & RLO-IU-SD \\
\addlinespace[0.2cm]
Robust linear optimization with & Section \ref{sec:CcuDg} & Section \ref{sec:CcuSd} \\
cardinality constrained uncertainty & RLO-CCU-DG & RLO-CCU-SD \\
\bottomrule
\end{tabular}
\caption{Structure of paper and model abbreviations. DG means the duality gap is minimized as an objective. SD means strong duality is enforced as a constraint.}
\label{tab:models}
\end{center}
\end{table}

This paper provides the first comprehensive analysis of inverse linear optimization for the recovery of constraint parameters. Although on the surface each inverse problem is nonconvex, we show through algebraic and geometric analysis that each model can be solved by solving at most $2m$ convex optimization problems, where $m$ is the number of constraints in the forward problem. In certain cases, solving the inverse problem can be reduced to solving $m$ linear optimization problems or evaluating closed form expressions. Table \ref{tab:results} summarizes the complexity of the solution approach for each of the six models.

\begin{table}
\begin{center}
\begin{tabular}{lcc}
\toprule
& DG & SD \\
\midrule
NLO & $m$ convex (linear) & Closed form \\
\addlinespace[0.2cm]
RLO-IU & $m$ convex (linear) & $m$ convex (linear) \\
\addlinespace[0.2cm]
\multirow{2}{*}{RLO-CCU} & $\le m$ linear and $m$ convex & \multirow{2}{*}{$\le m$ linear} \\
& ($\le 2m$ linear) & \\
\bottomrule
\end{tabular}
\caption{The number and type of optimization problems that need to be solved to find a solution to each of the six IO models. DG means the duality gap is minimized as an objective, and SD means strong duality is enforced as a constraint. Number of constraints in forward problem is $m$. Parentheses indicate reduction in complexity if associated side constraints are linear (all DG models) or under appropriate norm choice (in RLO-IU-SD).}
\label{tab:results}
\end{center}
\end{table}

\subsection{Notation}

The following notation will be used in the rest of the paper. Let $\beee$ be the vector of all ones. Let $\beee_i$ be the unit vector with $i$-th component equal to 1. Let $\ba_i$ be the $i$-th row of a matrix $\bA$, which has $m$ rows and $n$ columns. If we have a set of vectors with common index but of differing dimensions such as $\balpha_i$ for all $i\in I$, we will sometimes abuse notation and use $\balpha$ to denote the collection of vectors $\{\balpha_i\}_{i\in I}$. In some optimization models, we will be interested in minimizing over vectors $\{\ba_i\}_{i\in I}$ of the same dimension, in which case we may abuse notation and simply refer to the collection of decision vectors using $\bA$. Thus, whenever $\balpha$ or $\bA$ appear as decision variables in an optimization model, we are optimizing over a set of vectors $\{\ba_i\}_{i\in I}$ or $\{\balpha_i\}_{i\in I}$, respectively. We define $\sgn(x) = 1$ if $x \geq 0$ and $-1$ otherwise.

\section{Nominal linear optimization}\label{sec:nominal}

In this section, we consider the general linear optimization problem
\begin{equation}\label{Lp}
\begin{aligned}
\minimize_{\bx} \;\;\; & \sum_{j\in J}c_j x_j \\
\mbox{subject to} \;\;\; & \sum_{j\in J} a_{ij} x_j \geq b_i, \quad \forall i\in I.
\end{aligned}
\end{equation}

Given $\bb$ and an observed solution $\bxh$, the IO problem aims to identify a constraint matrix $\bA$ that minimizes suboptimality of $\bxh$ with respect to the forward problem and some nonzero cost vector. In Section \ref{sec:LpDg} we first consider the problem NLO-DG, which finds constraint parameters that minimize the duality gap, subject to problem-specific side constraints. These side constraints may render it impossible to make the observed solution $\bxh$ exactly optimal. However, if NLO-DG is found to have a zero duality gap, then the observed solution was in fact optimal with respect to some $\bA$ satisfying the side constraints. In this case, if finding a solution close to some prior parameter estimates is desired, one could then improve the solution quality by solving an IO model which minimally perturbs prior estimates of the constraint parameters subject to not only the side constraints, but also the requirement that there exists a nonzero cost vector rendering the observed solution $\bxh$ exactly optimal. We omit discussion of this model because its solution method would be very similar to that of NLO-DG, however, in Section \ref{sec:LpSd} we consider the independently interesting special case NLO-SD, which finds constraint parameters that make the observed solution exactly optimal, but are not required to satisfy any side constraints.

\subsection{Inverse optimization models}\label{sec:LpDg}

Let $\bpi$ be the dual vector associated with the constraints of the forward problem \eqref{Lp}. The following formulation minimizes the duality gap, subject to some convex constraints $\bA \in \bOmega$, while enforcing primal and dual feasibility:
\begin{subequations}\label{Lpin2}
\begin{align}
\textrm{NLO-DG:}\quad \minimize_{\bA,\bc,\bpi} \;\;\; & \sum_{j \in J} c_{j} \hat{x}_{j} - \sum_{i\in I} b_i \pi_i \label{Lpin2dg} \\
\mbox{subject to} \;\;\; & \bA \in \bOmega, \label{Lpin2side} \\
& \sum_{j \in J} a_{ij} \hat{x}_{j} \geq b_i, \quad \forall i\in I, \label{Lpin1p} \\
& \sum_{i\in I}\pi_i = 1, \label{Lpin1norm} \\
& \sum_{i\in I}a_{ij} \pi_i = c_j, \quad \forall j\in J, \label{Lpin1d} \\
& \pi_i \geq 0, \quad \forall i\in I. \label{Lpin1d2}
\end{align}
\end{subequations}

Constraints \eqref{Lpin1p} and \eqref{Lpin1d}-\eqref{Lpin1d2} represent primal feasibility and dual feasibility, respectively. Notice that the dual feasibility constraints can trivially be satisfied by $(\bc,\bpi) = (\bzero,\bzero)$. NLO-DG would then artificially induce a duality gap of zero while only requiring $\bA$ to satisfy the side constraints and primal feasibility, which is insufficient to guarantee $\bxh$ is optimal with respect to some nonzero $\bc$. Accordingly, constraint \eqref{Lpin1norm} is a normalization constraint that prevents $\bpi = \bzero$ from being feasible, and as a byproduct requires $\bc$ to be in the convex hull of $\{\ba_i\}_{i\in I}$. This set of feasible cost vectors may still include $\bc = \bzero$, but whether $\bc = \bzero$ will be optimal depends on the problem data. Furthermore, it is possible that $\ba_i = \bzero$ will be optimal for some $i\in I$, effectively trivializing that constraint of the forward problem. To prevent $\bc = \bzero$ or $\ba_i = \bzero$ for any $i\in I$ from being optimal for NLO-DG, we will make the following assumption:
\begin{assumption}\label{assumption:nominal_trivial_dg}
For all $i\in I$, $b_i > 0$, or $\ba_i \ne \bzero$ for all $\bA\in\bOmega \cap \{\bA \colon \bA\bxh\geq\bb\}$.
\end{assumption}
\noindent
This assumption requires that each constraint of the forward problem satisfy at least one of two conditions: either the right-hand-side coefficient is positive, in which case a trivial left-hand-side vector would render the constraint infeasible, or the side constraints are defined such that a trivial left-hand-side vector cannot simultaneously satisfy both the side constraints and primal feasibility.

The feasibility of NLO-DG is determined by whether or not $\bOmega$ allows for primal feasibility of the forward problem; the only other constraints of NLO-DG are dual feasibility and the normalization of $\bpi$, which can be satisfied by any $\bpi$ in the unit simplex, and the $\bc$ implied in turn by constraint \eqref{Lpin1d}. We omit the proof of this result, which is straightforward to show.
\begin{pro}\label{pro:NLOSDfeas}
NLO-DG is feasible if and only if there exists $\bA \in \bOmega$ such that $\ba_i^\intercal \bxh \geq b_i$ for all $i\in I$.
\end{pro}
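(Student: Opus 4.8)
The plan is to prove the two directions of the equivalence separately, and the whole argument rests on a single observation: among the constraints of NLO-DG, only the side constraints \eqref{Lpin2side} and primal feasibility \eqref{Lpin1p} involve $\bA$, whereas the normalization \eqref{Lpin1norm}, the dual feasibility equations \eqref{Lpin1d}, and the sign constraints \eqref{Lpin1d2} can be fulfilled for any fixed $\bA$. Establishing this decoupling is essentially the entire content of the result.

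First I would dispatch the forward direction. If NLO-DG is feasible, I would take any feasible triple $(\bA,\bc,\bpi)$ and simply read off that its $\bA$-component satisfies \eqref{Lpin2side} together with \eqref{Lpin1p}, which is precisely the statement that $\bA \in \bOmega$ and $\baiT \bxh \geq b_i$ for all $i \in I$. No construction is required in this direction.

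For the reverse direction I would start from an $\bA \in \bOmega$ satisfying $\baiT \bxh \geq b_i$ for every $i$, and extend it to a full feasible point by constructing suitable $\bpi$ and $\bc$. The idea is to choose any $\bpi$ in the unit simplex—for instance $\bpi = \beee_1$—so that \eqref{Lpin1norm} and \eqref{Lpin1d2} hold automatically, and then to define $c_j := \sum_{i\in I} a_{ij}\pi_i$ for each $j \in J$, which makes \eqref{Lpin1d} hold as an identity. Since $\bc$ carries no side constraints of its own in the formulation, this assignment is always admissible, so the resulting triple $(\bA,\bc,\bpi)$ is feasible for NLO-DG.

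I do not expect a genuine obstacle here, since the substance of the claim is exactly the decoupling noted above, which reduces feasibility of NLO-DG to the bare existence of a primal-feasible $\bA \in \bOmega$. The only points I would check explicitly are that the unit simplex is nonempty (so a valid $\bpi$ exists) and that the formulation places no normalization or sign requirement on $\bc$ that could conflict with the definition $c_j = \sum_{i\in I} a_{ij}\pi_i$; both are immediate from inspecting \eqref{Lpin2}.
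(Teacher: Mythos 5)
Your proposal is correct and matches the paper's intended argument exactly: the paper omits a formal proof but sketches precisely this reasoning, noting that beyond \eqref{Lpin2side} and \eqref{Lpin1p} the remaining constraints can be satisfied by any $\bpi$ in the unit simplex together with the $\bc$ implied by \eqref{Lpin1d}. Your explicit choice $\bpi = \beee_1$, $c_j = \sum_{i\in I} a_{ij}\pi_i$ is a valid instantiation of that sketch, and the forward direction is, as you say, immediate.
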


As written, NLO-DG is nonconvex: all its constraints are linear except for the convex side constraints \eqref{Lpin2side} and the dual feasibility constraint \eqref{Lpin1d}, which is bilinear in $\bA$ and $\bpi$. Nevertheless, it is possible to develop an efficient solution method.

\begin{thm}
\label{thm:LpMip}
For all $i\in I$, let
\begin{align}
t_i = \min_{\bA} \left\{ \sum_{j\in J} a_{ij} \hat{x}_j - b_i \colon \bA \in \bOmega, \bA\bxh \geq \bb \right\}, \label{LpMipt}
\end{align}
and let $\bA^{(i)}$ be an optimal solution for \eqref{LpMipt}. Let $i^* \in \argmin_{i\in I} \{t_i\}$, and let $\bA^* = \bA^{(i^*)}$. Then the optimal value of NLO-DG is $t_{i^*}$, and an optimal solution $(\bA,\bc,\bpi)$ is
\begin{align}
\ba_i &= \ba^*_i, \quad \forall i\in I, \label{LpMipopt1} \\
\bc &= \ba^*_{i^*}, \label{LpMipopt2} \\
\bpi &= \beee_{i^*},
\end{align}
where, given Assumption \ref{assumption:nominal_trivial_dg}, $\bc \ne \bzero$ and $\ba_i \ne \bzero$ for all $i\in I$.
\end{thm}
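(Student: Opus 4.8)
The plan is to exploit the dual feasibility constraints to collapse the bilinear objective into a single linear expression in $\bpi$, after which the problem decouples across constraints. First I would substitute \eqref{Lpin1d} into the objective \eqref{Lpin2dg}, writing
\begin{equation*}
\sum_{j\in J} c_j \hat{x}_j - \sum_{i\in I} b_i \pi_i = \sum_{j\in J}\left(\sum_{i\in I} a_{ij}\pi_i\right)\hat{x}_j - \sum_{i\in I} b_i \pi_i = \sum_{i\in I} \pi_i\left(\ba_i^\intercal \bxh - b_i\right).
\end{equation*}
Constraints \eqref{Lpin1norm} and \eqref{Lpin1d2} force $\bpi$ onto the unit simplex, and primal feasibility \eqref{Lpin1p} guarantees each slack $\ba_i^\intercal \bxh - b_i$ is nonnegative, so the objective is a convex combination of nonnegative per-constraint slacks. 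This is the key reduction; everything downstream is a matter of separately minimizing the slacks and then concentrating all the simplex weight on the smallest one.

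For the lower bound, I would argue that any feasible $(\bA,\bc,\bpi)$ of NLO-DG has $\bA$ feasible for every subproblem \eqref{LpMipt}, since $\bA\in\bOmega$ and $\bA\bxh\geq\bb$ are exactly the constraints of \eqref{LpMipt}. Hence $\ba_i^\intercal \bxh - b_i \geq t_i \geq t_{i^*}$ for all $i$, and because $\sum_{i\in I}\pi_i = 1$ with $\pi_i\geq 0$, the reduced objective satisfies
\begin{equation*}
\sum_{i\in I}\pi_i\left(\ba_i^\intercal \bxh - b_i\right) \geq \sum_{i\in I}\pi_i\, t_{i^*} = t_{i^*},
\end{equation*}
so $t_{i^*}$ is a valid lower bound on the optimal value.

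To show this bound is attained, I would verify the proposed point is feasible and evaluates to $t_{i^*}$. Feasibility of \eqref{Lpin2side} and \eqref{Lpin1p} is inherited from $\bA^*=\bA^{(i^*)}$ being optimal, hence feasible, for subproblem $i^*$; the choice $\bpi=\beee_{i^*}$ satisfies \eqref{Lpin1norm} and \eqref{Lpin1d2}; and setting $\bc=\ba^*_{i^*}$ makes \eqref{Lpin1d} hold because $\sum_{i\in I} a^*_{ij}(\beee_{i^*})_i = a^*_{i^*j}$. Plugging into the reduced objective leaves only the $i^*$ term, which equals $(\ba^*_{i^*})^\intercal\bxh - b_{i^*} = t_{i^*}$ by the definition of $\bA^{(i^*)}$. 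Matching the lower bound, this point is optimal. The one thing to watch here is that $t_i$ is defined by minimizing over the entire matrix $\bA$ rather than row $i$ alone; this causes no difficulty because the optimal $\bpi$ concentrates on index $i^*$, so only row $i^*$ of $\bA^*$ enters the objective and the remaining rows need only preserve feasibility, which they do.

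Finally, for the nonzero claims I would fix any $i$ and split on Assumption \ref{assumption:nominal_trivial_dg}. Since $\bA^*$ is feasible, if the disjunct $b_i>0$ holds then primal feasibility gives $(\ba^*_i)^\intercal\bxh \geq b_i > 0$, forcing $\ba^*_i\neq\bzero$; if instead the disjunct guaranteeing $\ba_i\neq\bzero$ on $\bOmega\cap\{\bA\colon\bA\bxh\geq\bb\}$ holds, then $\ba^*_i\neq\bzero$ directly. Either way $\ba_i=\ba^*_i\neq\bzero$ for all $i$, and in particular $\bc=\ba^*_{i^*}\neq\bzero$. The main obstacle in the whole argument is really just spotting the substitution that linearizes the objective; once the duality gap is recognized as a simplex average of slacks, the decomposition into the $m$ subproblems \eqref{LpMipt} and the vertex choice of $\bpi$ follow naturally.
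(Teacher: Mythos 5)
Your proposal is correct and follows essentially the same route as the paper: substitute the dual feasibility constraint \eqref{Lpin1d} into the objective to obtain $\sum_{i\in I}\pi_i(\ba_i^\intercal\bxh - b_i)$, observe that the simplex constraint on $\bpi$ makes this a convex combination of slacks, and conclude that concentrating the weight on the constraint with minimum achievable slack $t_{i^*}$ is optimal. The only difference is presentational --- you spell out the equivalence as an explicit lower bound plus attainment, where the paper asserts the vertex optimality of $\bpi$ and the nested-minimization equivalence directly --- and your case split on the two disjuncts of Assumption \ref{assumption:nominal_trivial_dg} is a slightly more explicit rendering of the paper's closing argument.
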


\begin{rmk}\label{rmk:LpMip}
Theorem \ref{thm:LpMip} shows that an optimal solution to the nonconvex inverse problem NLO-DG can be found by solving $m$ convex optimization problems, which become linear whenever the constraints $\bA \in \bOmega$ can be written linearly.
\end{rmk}

\begin{proof}
Substituting \eqref{Lpin1d} into the objective function \eqref{Lpin2dg}, we get the problem
\begin{equation}
\begin{aligned}\label{Lpin2eq1}
\minimize_{\bA,\bpi} \;\;\; & \sum_{i\in I} \pi_i \left(\sum_{j \in J} a_{ij} \hat{x}_j - b_i\right) \\
\mbox{subject to} \;\;\; & \bA \in \bOmega, \; \bA\bxh \geq \bb, \\
& \beee^\intercal\bpi = 1, \; \bpi \geq \bzero.
\end{aligned}
\end{equation}
For a given feasible $\bA$, it is clear that an optimal $\bpi$ is $\beee_{i^*}$, where $i^* \in \argmin_{i\in I} \{\sum_{j\in J}a_{ij} \hat{x}_j - b_i \}$. Problem \eqref{Lpin2eq1} is therefore equivalent to $\min_{i\in I} \left\{ \min_{\bA} \left\{ \sum_{j\in J}a_{ij} \hat{x}_j - b_i \colon \bA \in \bOmega, \bA\bxh \geq \bb \right\} \right\}$. By definition, $\bA^{(i)}$ is an optimal solution for the inner problem, and the optimal value of the outer problem is $\min_{i\in I} \{t_i \}$. Finally, $\bpi = \beee_{i^*}$ and \eqref{Lpin1d} imply that $\bc = \ba^*_{i^*}$.

By Assumption \ref{assumption:nominal_trivial_dg}, $\ba_k = \bzero$ is infeasible for problem \eqref{LpMipt} for all $k\in I, i\in I$, and therefore the optimal solution \eqref{LpMipopt1}-\eqref{LpMipopt2} satisfies $\bc \ne \bzero$ and $\ba_i \ne \bzero$ for all $i\in I$.
\end{proof}

Theorem \ref{thm:LpMip} and its proof can be interpreted as follows. For all $i\in I$, $t_i$ is the minimum achievable surplus for constraint $i$, while respecting primal feasibility and the constraints $\bA \in \bOmega$. Because of the normalization constraint \eqref{Lpin1norm}, the duality gap is equal to a convex combination of the surpluses of the constraints of the forward problem. The minimum possible duality gap will therefore equal the surplus of some constraint $i^*$, and the optimal choice of this constraint is the one with the minimum possible surplus, i.e., $i^* \in \argmin_{i\in I}\{t_i\}$. The constraint vectors are then chosen such that the surplus of constraint $i^*$ equals $t_{i^*}$, and the cost vector is set perpendicular to constraint $i^*$.

\subsubsection{Enforcing strong duality}\label{sec:LpSd}

In this section, we propose an alternative IO model that can be used when there are no side constraints on $\bA$, in which case it may be possible achieve strong duality exactly. In this case, we let $\bah_i$ be given for all $i\in I$, and consider a model variant that minimizes the weighted deviations of the vectors $\ba_i$ from $\bah_i$, while enforcing strong duality, primal and dual feasibility, and the same normalization constraint as in NLO-DG:
\begin{subequations}\label{Lpin1}
\begin{align}
\textrm{NLO-SD:}\quad \minimize_{\bA,\bc,\bpi} \;\;\; & \sum_{i\in I}\xi_i \lVert\ba_i - \bah_i \rVert \label{Lpin1obj} \\
\mbox{subject to} \;\;\; & \sum_{j \in J} c_{j} \hat{x}_{j} - \sum_{i\in I} b_i \pi_i = 0, \label{Lpin1sd} \\
& \eqref{Lpin1p} - \eqref{Lpin1d2}.
\end{align}
\end{subequations}
In the objective function \eqref{Lpin1obj}, $\lVert \cdot \rVert$ is an arbitrary norm, and $\bxi$ is a vector of real-valued weights that is user-tunable. The procedure to estimate the prior vectors $\{\bah_i\}_{i\in I}$ will be application-dependent, and the choice of these estimates will help determine which of the multiple possible imputations that satisfy strong duality will be returned. Unlike previous IO approaches that minimize deviation of $\bc$ from some prior $\hat\bc$, we do not include such an objective since our goal is to determine a constraint matrix $\bA$ that makes $\bxh$ optimal. However, because the vector $\bc$ is still unknown, it must be included as a decision variable in the IO model to facilitate imputing the parameters of interest, i.e., to ensure $\hat\bx$ is optimal with respect to \emph{some} cost vector. Constraint \eqref{Lpin1sd} represents strong duality.

In this subsection, we make the following assumption on the problem data to prevent NLO-SD from having a trivial solution:
\begin{assumption}\label{assumption:nominal_trivial}
For all $i\in I$, $b_i \ne 0$ and $\bah_i \ne \bzero$.
\end{assumption}
\noindent
It is reasonable to expect $\bah_i \ne \bzero$ to be satisfied in most applications, but $b_i \ne 0$ may be considered a strong requirement. However, Assumption \ref{assumption:nominal_trivial} will be used as a sufficient rather than necessary condition for NLO-SD to have a non-trivial optimal solution, and therefore there may be no issue even if $b_i = 0$ for some $i\in I$. If this situation does result in a trivial solution, there are three possible circumventions (see Appendix B in the online supplement for examples). First, we can perturb $b_i$ to be nonzero, although this amounts to a modification of the original problem in which we impute not only $\bA$ but also $b_i$ for at least one constraint. Second, we can perturb $\bah_i$, although we have not characterized the nature of the perturbation necessary to return a non-trivial solution. Third, we can perturb $\bxi$, but this will not work for all problem data (e.g., Example 8 in the online supplement). A more satisfactory solution to this issue is left to future work.

NLO-SD is nonconvex for the same reason as NLO-DG, but the exclusion of constraints $\bA \in \bOmega$ will allow a less complex solution method. First, we note that NLO-SD is always feasible if $\hat\bx \ne \bzero$, and accordingly we make the following assumption for the remainder of this subsection.
\begin{assumption}\label{assumption:lpx}
$\bxh \ne \bzero$.
\end{assumption}
\noindent
This assumption suffices to guarantee feasibility of NLO-SD because if there exists $\hat{j}\in J$ such that $\hat{x}_{\hat{j}} \ne 0$, then the following is a feasible solution to NLO-SD:
\begin{align*}
\bpi &= \beee_{\hat{i}}, \quad \text{for some } \hat{i}\in I, \\
a_{ij} &= \left\{
\begin{array}{ll}
\frac{b_i}{\hat{x}_j} & \text{if } j = \hat{j}, \\
0 & \text{otherwise}, \quad \forall i\in I,
\end{array}
\right. \\
\bc &= \ba_{\hat{i}}.
\end{align*}

Next, we show that the constraints of NLO-SD effectively formalize the geometric intuition that an optimal solution for a linear program must be on the boundary of the feasible region.
\begin{lem}
\label{lem:LpFeas2}
Every feasible solution for NLO-SD satisfies
\begin{subequations}\label{Lpin1abceq}
\begin{align}
& \sum_{j\in J}a_{\hat{i}j} \hat{x}_j = b_{\hat{i}}, \quad \text{for some } \hat{i}\in I, \label{thm:Lp2algc1} \\
& \sum_{j\in J}a_{ij} \hat{x}_j \geq b_i, \quad \forall i\in I. \label{thm:Lp2algc2}
\end{align}
\end{subequations}
Conversely, for every $\bA$ satisfying \eqref{Lpin1abceq}, there exists $(\bc,\bpi)$ such that $(\bA,\bc,\bpi)$ is feasible for NLO-SD.
\end{lem}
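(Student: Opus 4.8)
The plan is to prove both directions by exploiting the complementary-slackness structure that strong duality imposes, rather than by any intricate construction. For the forward direction, note first that condition \eqref{thm:Lp2algc2} is nothing but the primal feasibility constraint \eqref{Lpin1p}, so it holds for every feasible solution automatically. The only real content is establishing \eqref{thm:Lp2algc1}, i.e.\ that at least one constraint is tight at $\bxh$.

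To obtain \eqref{thm:Lp2algc1}, I would start from strong duality \eqref{Lpin1sd} and substitute the dual feasibility relation \eqref{Lpin1d} into it. Writing $c_j = \sum_{i\in I} a_{ij}\pi_i$ and rearranging the double sum yields
\begin{equation*}
\sum_{i\in I} \pi_i \left( \sum_{j\in J} a_{ij}\hat{x}_j - b_i \right) = 0.
\end{equation*}
Now each factor $\sum_{j\in J} a_{ij}\hat{x}_j - b_i$ is nonnegative by primal feasibility \eqref{Lpin1p}, and each $\pi_i$ is nonnegative by \eqref{Lpin1d2}, so this is a sum of nonnegative terms equal to zero. Hence every term vanishes, giving the complementary-slackness conditions $\pi_i \left( \sum_{j\in J} a_{ij}\hat{x}_j - b_i \right) = 0$ for all $i\in I$. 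The normalization constraint \eqref{Lpin1norm}, $\sum_{i\in I}\pi_i = 1$, then guarantees that $\pi_{\hat{i}} > 0$ for at least one index $\hat{i}$, and for that index the corresponding surplus must be zero, which is exactly \eqref{thm:Lp2algc1}.

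For the converse, given any $\bA$ satisfying \eqref{Lpin1abceq}, I would construct an explicit feasible $(\bc,\bpi)$. Let $\hat{i}$ be the index for which constraint $\hat{i}$ is tight per \eqref{thm:Lp2algc1}, set $\bpi = \beee_{\hat{i}}$, and define $\bc = \ba_{\hat{i}}$; note this choice of $\bc$ is forced once $\bpi = \beee_{\hat{i}}$ if we want \eqref{Lpin1d} to hold, since $\sum_{i\in I} a_{ij}\pi_i = a_{\hat{i}j}$. Then normalization \eqref{Lpin1norm} and nonnegativity \eqref{Lpin1d2} hold trivially, primal feasibility \eqref{Lpin1p} is condition \eqref{thm:Lp2algc2}, and strong duality \eqref{Lpin1sd} reduces to $\sum_{j\in J} a_{\hat{i}j}\hat{x}_j - b_{\hat{i}} = 0$, which is precisely the tightness from \eqref{thm:Lp2algc1}. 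I do not expect a genuine obstacle here: the argument is a direct LP complementary-slackness computation, and the only point requiring care is to observe that the normalization constraint is what promotes the weak ``$\sum_i \pi_i(\cdot)=0$'' identity into tightness of a specific constraint, by ruling out the degenerate possibility $\bpi = \bzero$.
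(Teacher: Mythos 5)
Your proof is correct and follows essentially the same route as the paper's: both substitute the dual feasibility relation \eqref{Lpin1d} into the strong duality constraint \eqref{Lpin1sd} to obtain $\sum_{i\in I}\pi_i\bigl(\sum_{j\in J}a_{ij}\hat{x}_j - b_i\bigr) = 0$, then invoke the normalization constraint \eqref{Lpin1norm} together with nonnegativity of $\bpi$ and primal feasibility, and both use the identical construction $\bpi = \beee_{\hat{i}}$, $\bc = \ba_{\hat{i}}$ for the converse. The only cosmetic difference is that the paper argues by contradiction (assuming all surpluses are strictly positive) while you argue directly via complementary slackness, which is the same logical content.
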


Lemma \ref{lem:LpFeas2} allows us to characterize an optimal solution for NLO-SD and suggests an efficient solution method.

\begin{thm}\label{thm:LpAlg}
For all $i\in I$, let
\begin{align}
f_i & = \frac{\xi_i \left\lvert \bah_i^\intercal \bxh - b_i \right\rvert} {\lVert \bxh \rVert^*}, \label{Lp2act}
 \\
\ba^f_i & = \bah_i - \frac{\bah_i^\intercal \bxh - b_i}{\lVert \bxh \rVert^*}\bv(\bxh), \label{Lp2actsol}
 \\
g_i & = \label{Lp2feas}
	\begin{cases}
		f_i & \text{if } \bah_i^\intercal\bxh < b_i, \\
		0 & \text{otherwise},
	\end{cases} \\
\ba^g_i & = \label{Lp2feassol}
	\begin{cases}
		\ba^f_i & \text{if } \bah_i^\intercal\bxh < b_i, \\
		\bah_i & \text{otherwise},
	\end{cases}
\end{align}
where $\lVert \bxh \rVert^* = \max_{\lVert \bv \rVert = 1} \bxh^\intercal \bv$ is the dual norm of $\lVert \cdot \rVert$, and $\bv(\bxh) \in \argmax_{\lVert \bv \rVert = 1} \bxh^\intercal \bv$. Let $i^* \in \argmin_{i\in I} \{f_i - g_i \}$. Then the optimal value of NLO-SD is $f_{i^*} + \sum_{i\in I\setminus\{i^*\}}g_i$, and an optimal solution $(\bA, \bc, \bpi)$ is
\begin{align}
& \ba_i = \left\{
\begin{array}{ll}
\ba^f_i & \text{if } i=i^*, \label{Lp2actsol2}
 \\
\ba^g_i & \text{if } i\in I\setminus\{i^*\}, 
\end{array}
\right. \\
& \bc = \ba_{i^*}, \label{Lp2csol} \\
& \bpi = \beee_{i^*},
\end{align}
where, given Assumption \ref{assumption:nominal_trivial}, $\bc \ne \bzero$ and $\ba_i \ne \bzero$ for all $i\in I$.
\end{thm}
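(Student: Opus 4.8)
The plan is to use Lemma~\ref{lem:LpFeas2} to eliminate the variables $\bc$ and $\bpi$ entirely. The objective \eqref{Lpin1obj} depends only on $\bA$, and the lemma says that the projection of the feasible set of NLO-SD onto the $\bA$-space is exactly the set of matrices satisfying \eqref{Lpin1abceq}. Hence NLO-SD is equivalent to
\begin{align*}
\minimize_{\bA} \;\; & \sum_{i\in I} \xi_i \|\ba_i - \bah_i\| \\
\text{subject to} \;\; & \ba_i^\intercal \bxh \geq b_i, \;\; \forall i\in I, \\
& \ba_{\hat i}^\intercal \bxh = b_{\hat i}, \;\; \text{for some } \hat i\in I.
\end{align*}
The disjunctive ``at least one constraint tight'' requirement I would handle by enumeration: the feasible set is the union over $\hat i$ of the pieces in which row $\hat i$ is exactly tight and all other rows are merely feasible, so the minimum is $\min_{\hat i}$ of the minimum over each piece.

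For a fixed $\hat i$, both the objective and the constraints separate across the rows $\ba_i$, so the subproblem decomposes into $m$ independent row problems. The designated row solves $\min\{\xi_{\hat i}\|\ba - \bah_{\hat i}\| : \ba^\intercal \bxh = b_{\hat i}\}$, a projection onto a hyperplane under the norm $\|\cdot\|$, while every other row solves $\min\{\xi_i \|\ba - \bah_i\| : \ba^\intercal \bxh \geq b_i\}$, a projection onto a half-space. The key computation is the hyperplane projection: writing $\ba = \bah_i + \bd$, feasibility becomes $\bd^\intercal \bxh = b_i - \bah_i^\intercal \bxh$, and the H\"older inequality $|\bd^\intercal \bxh| \leq \|\bd\|\,\|\bxh\|^*$ yields the lower bound $\|\bd\| \geq |\bah_i^\intercal \bxh - b_i|/\|\bxh\|^*$, attained by taking $\bd$ proportional to the dual-norm maximizer $\bv(\bxh)$. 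This gives the optimal value $f_i$ of \eqref{Lp2act} and the minimizer $\ba^f_i$ of \eqref{Lp2actsol}. For the half-space problem, $\bah_i$ is either already feasible (optimal cost $0$, keep $\bah_i$) or infeasible, in which case the nearest feasible point lies on the boundary hyperplane (cost $f_i$, solution $\ba^f_i$); this reproduces $g_i$ and $\ba^g_i$ in \eqref{Lp2feas}--\eqref{Lp2feassol}.

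With the row-wise values established, the cost of the subproblem for fixed $\hat i$ is $f_{\hat i} + \sum_{i\neq \hat i} g_i = \sum_{i\in I} g_i + (f_{\hat i} - g_{\hat i})$, so minimizing over $\hat i$ reduces to $i^* \in \argmin_{i\in I}\{f_i - g_i\}$, giving the claimed optimal value $f_{i^*} + \sum_{i\in I\setminus\{i^*\}} g_i$ and the row assignment \eqref{Lp2actsol2}. Then I would verify directly that the stated $(\bA,\bc,\bpi)$ with $\bc = \ba_{i^*}$ and $\bpi = \beee_{i^*}$ is feasible for NLO-SD: normalization \eqref{Lpin1norm} and dual feasibility \eqref{Lpin1d} hold because $\bpi$ is a unit vector and $\bA^\intercal \beee_{i^*} = \ba_{i^*}$, primal feasibility holds by construction of $\ba^f_{i^*}$ and $\ba^g_i$, and strong duality \eqref{Lpin1sd} holds because $\ba_{i^*}^\intercal \bxh = b_{i^*} = \bb^\intercal \beee_{i^*}$. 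For the non-triviality claim, Assumption~\ref{assumption:nominal_trivial} gives that the tight row satisfies $\ba_{i^*}^\intercal \bxh = b_{i^*} \neq 0$, forcing $\ba_{i^*} = \bc \neq \bzero$, while each remaining row equals either $\bah_i \neq \bzero$ or a vector with $\ba_i^\intercal \bxh = b_i \neq 0$.

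I expect the main obstacle to be the norm-projection step: correctly identifying the minimizer $\ba^f_i$ through the dual norm $\|\bxh\|^*$ and its maximizer $\bv(\bxh)$, and confirming that the half-space projection coincides with the hyperplane projection precisely when $\bah_i$ is infeasible (the convexity argument placing the nearest feasible point on the boundary). Once Lemma~\ref{lem:LpFeas2} supplies the feasibility characterization, the separation across rows and the enumeration over the tight constraint are conceptually routine.
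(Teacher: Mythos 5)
Your proposal is correct and follows essentially the same route as the paper's proof: invoke Lemma~\ref{lem:LpFeas2} to reduce NLO-SD to an enumeration over the tight constraint $\hat{i}$, separate each subproblem row-wise into a hyperplane projection and half-space projections, and reduce the outer minimization to $i^* \in \argmin_{i\in I}\{f_i - g_i\}$, with the same non-triviality argument from Assumption~\ref{assumption:nominal_trivial}. The only difference is that you derive the norm-projection formula directly via H\"older's inequality and attainment at $\bv(\bxh)$, whereas the paper cites Theorem 2.1 of \citet{mangasarian1999} for the same values $f_i$ and minimizers $\ba^f_i$.
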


\begin{rmk}\label{rmk:LpAlg}
Theorem \ref{thm:LpAlg} shows that an optimal solution to the nonconvex inverse optimization problem NLO-SD can be found in closed form.
\end{rmk}

\begin{proof}
By Lemma \ref{lem:LpFeas2}, solving NLO-SD is equivalent to solving the following optimization problem for all $\hat{i}\in I$, and taking the minimum over all $\lvert I\rvert$ optimal values:
\begin{equation}\label{Lpin1eq}
\begin{aligned}
\minimize_{\bA} \;\;\; & \sum_{i\in I}\xi_i \lVert \ba_i - \bah_i \rVert \\
\mbox{subject to} \;\;\; & \sum_{j\in J}a_{\hat{i}j} \hat{x}_j = b_{\hat{i}}, \\
& \sum_{j\in J}a_{ij} \hat{x}_j \geq b_i, \quad \forall i\in I.
\end{aligned}
\end{equation}
Suppose we fix some $\hat{i} \in I$. Since formulation \eqref{Lpin1eq} is separable by $i$, the optimal value of the $\hat{i}$-th formulation~\eqref{Lpin1eq} is $\bar{f}_{\hat{i}} + \sum_{i\in I\setminus\{\hat{i}\}} \bar{g}_i$, where we let
\begin{align}
\bar{f}_i & = \min_{\ba_i}\left\{\xi_i \lVert\ba_i - \bah_i \rVert \colon \sum_{j\in J}a_{ij} \hat{x}_j = b_i \right\}, \label{Lp2actbar} \\
\bar{g}_i & = \min_{\ba_i}\left\{\xi_i \lVert\ba_i - \bah_i \rVert \colon \sum_{j\in J}a_{ij} \hat{x}_j \geq b_i \right\}, \label{Lp2feasbar}
\end{align}
for all $i\in I$. Because problem \eqref{Lp2actbar} is the projection of a point $\bah_i$ onto the hyperplane $\bxh^\intercal \ba_i = b_i$, it can be shown using Theorem 2.1 of \citet{mangasarian1999} that its optimal solution is $\ba^f_i$ and its optimal value is $\bar{f}_i = f_i$. Problem \eqref{Lp2feasbar} is the projection of $\bah_i$ onto the closed half-space $\bxh^\intercal \ba_i \geq b_i$, so its optimal solution consists of two cases: if $\bah_i$ is in that closed half-space then it is optimal, otherwise its projection onto the closed half-space must be on its boundary, i.e., it equals $\ba^f_i$. Accordingly, the optimal value of problem \eqref{Lp2feasbar} is $\bar{g}_i = g_i$.

Therefore, the optimal value of NLO-SD is
\begin{align*}
\min_{\hat{i}\in I} \left\{ f_{\hat{i}} + \sum_{i\in I\setminus\{\hat{i}\}}g_i \right\}.
\end{align*}
Clearly, the optimal index $i^*$ must satisfy $i^* \in \argmin_{i\in I}\{f_i - g_i \}$. An optimal $\bA$ is given by \eqref{Lp2actsol2}, which is derived from the optimal solutions of \eqref{Lp2actbar} and \eqref{Lp2feasbar}, and an optimal cost vector is $\bc = \ba_{i^*}$.

For all $i\in I$, $\ba^f_i \ne \bzero$ since $\ba_i = \bzero$ is infeasible for problem \eqref{Lp2actbar} due to the assumption that $b_i \ne 0$. For all $i\in I$, we also have $\ba^g_i \ne \bzero$ due to the assumption that $\bah_i \ne \bzero$. Therefore the optimal $\ba_i \ne \bzero$ for all $i\in I$, and the optimal $\bc \ne \bzero$.
\end{proof}

Theorem \ref{thm:LpAlg} can be interpreted as follows. For all $i\in I$, $f_i$ is the minimal value of the $i$-th term in objective function \eqref{Lpin1obj} such that constraint $i$ of the forward problem is rendered active. Similarly, $g_i$ is the minimal value for constraint $i$ to be rendered feasible; clearly, $g_i \ne 0$ only if $\bxh$ is infeasible with respect to $\bah_i$. For $\bxh$ to be optimal for the forward problem, some constraint $i^*$ must have $\ba_{i^*}$ set such that $\bxh$ is on the boundary. The optimal choice of this constraint is the one that requires the minimal additional increase in $\xi_i \lVert \ba_i - \bah_i \rVert$ for the constraint to be active rather than merely feasible, i.e., $i^* \in \argmin \{ f_i - g_i \}$. To satisfy the optimality conditions, the cost vector is set perpendicular to this active constraint.

Theorem \ref{thm:LpAlg} also draws a close parallel with one of the main results from \citet{chan2019}. There, the focus is on imputing a cost vector for a linear optimization problem, given a fixed feasible region and an observed interior point $\bxh$. It was shown that an optimal solution involves projecting $\bxh$ to the boundary $\ba^\intercal_i \bx = b_i$ of each constraint, identifying the constraint $i^*$ associated with the minimal distance, and then setting the cost vector perpendicular to that constraint. Similarly, an optimal solution to NLO-SD involves projecting $\bah_i$ to the hyperplane $\ba_i^\intercal \bxh = b_i$ for each constraint, identifying the constraint $i^*$ associated with the minimal distance, and then setting the cost vector perpendicular to that constraint. In the process, we also adjust the normal vector of the constraint $i^*$ such that the constraint is active with respect to $\bxh$. In contrast, the constraints' normal vectors are all given and fixed in \citet{chan2019}.

\subsection{Numerical examples}\label{sec:LpEx}

In this section, we provide numerical examples that illustrate solutions to NLO-DG and NLO-SD and their associated geometric characteristics.

\begin{exmp}[\textbf{NLO-DG}]\label{ex:LpDg}
Let $\bxh = (-2,6)$, $\bb = (-6,-6,-10)$, and
\begin{align*}
\bOmega = \{ & \bA \colon 1 \le a_{11} \le 1.5, \; 2 \le a_{22} \le 3, \\
& a_{12} = 0, \; a_{21} = 0, \\
& a_{31} \le -2, \; -2 \le a_{32} \le -0.5, \\
& a_{31} + 2a_{22} \le 2 \}.
\end{align*}
It is easy to check that there exists $\bA \in \bOmega$ such that $\bA\bxh \geq \bb$, so NLO-DG is feasible by Proposition~\ref{pro:NLOSDfeas}. Applying Theorem \ref{thm:LpMip} and solving formulation~\eqref{LpMipt} for $i = 1, 2, 3$, we compute that $\bt = (3, 18, 2)$, hence $i^* = 3$. An optimal solution of formulation~\eqref{LpMipt} corresponding to $i^* = 3$ is
\begin{align*}
\bA^* =
\begin{pmatrix}
1 & 0 \\
0 & 2 \\
-2 & -2
\end{pmatrix},
\end{align*}
and the optimal cost vector is $\bc = \ba^*_3 = (-2, -2)$. These results are illustrated in Figure \ref{fig:LpDg}. The observed solution $\bxh$ is an interior point of the imputed feasible region because the constraints $\bA \in \bOmega$ do not admit a feasible region that puts the observed solution on its boundary. The IO model NLO-DG instead minimizes the surplus of a single constraint, thereby minimizing the duality gap by setting the cost vector perpendicular to this constraint.
\end{exmp}

\begin{figure*}
\centering
\begin{subfigure}{0.5\textwidth}
\centering
\begin{tikzpicture}
		\begin{axis}[
			axis x line=center,
			axis y line=center,
			x=0.3cm, y=0.3cm, 
			xlabel={$x_1$},
			ylabel={$x_2$},
			xlabel style={right},
			ylabel style={above},
			xtick={-8,-4,4,8},
			ytick={-4,4,8,12,16,20,24},
			minor xtick={-8,-6,-4,...,8}, 
			minor ytick={-6,-4,...,26}, 
			grid=minor,
			legend cell align={left},
			xmin=-8.9,
			xmax=8.9,
			ymin=-6.9,
			ymax=26.9]
				
	  \filldraw (-2,6) circle (1.5pt) node[above] {$\bxh$}; 
		\draw[semithick,blue] (-6,-3) -- (-6,11) -- (8,-3) -- (-6,-3); 
		\draw[->, semithick, blue] (2.5,2.5) -- (0.5,0.5) node[above] {$\bc$}; 
		
		\addlegendimage{semithick,blue}
		\addlegendentry{Imputed feasible region}
		
		\end{axis}
\end{tikzpicture}
\caption{Example \ref{ex:LpDg}: NLO-DG.}
\label{fig:LpDg}
\end{subfigure}~~~~ 
\begin{subfigure}{0.5\textwidth}
\centering
\begin{tikzpicture}
		\begin{axis}[
			axis x line=center,
			axis y line=center,
			x=0.3cm, y=0.3cm, 
			xlabel={$x_1$},
			ylabel={$x_2$},
			xlabel style={right},
			ylabel style={above},
			xtick={-8,-4,4,8},
			ytick={-4,4,8,12,16,20,24},
			minor xtick={-8,-6,-4,...,8}, 
			minor ytick={-6,-4,...,26}, 
			grid=minor,
			legend cell align={left}, 
			xmin=-8.9,
			xmax=8.9,
			ymin=-6.9,
			ymax=26.9]
				
	  \filldraw (-2,6) circle (1.5pt) node[above] {$\bxh$}; 
		
		\draw[semithick,blue] (-8,-6) -- (0,10) -- (8,-6) -- (-8,-6); 
		\draw[very thick,red,dashed] (-6,22) -- (8,-6) -- (-6,-6) -- (-6,22); 
		
		\draw[->, semithick, blue] (-3,4) -- (-1.8,3.4) node[below] {$\bc$}; 
				
		\addlegendimage{very thick,red,dashed}
		\addlegendentry{Prior feasible region}
		\addlegendimage{semithick,blue}
		\addlegendentry{Imputed feasible region}
		
		\end{axis}
\end{tikzpicture}
\caption{Example \ref{ex:LpSd}: NLO-SD.}
\label{fig:LpSd}
\end{subfigure}
\caption{Numerical examples for the nominal IO models.}
\label{fig:LpEx}
\end{figure*}
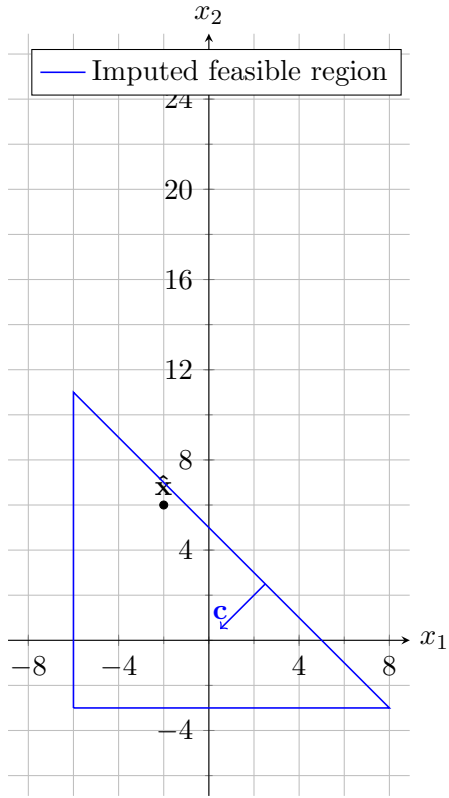
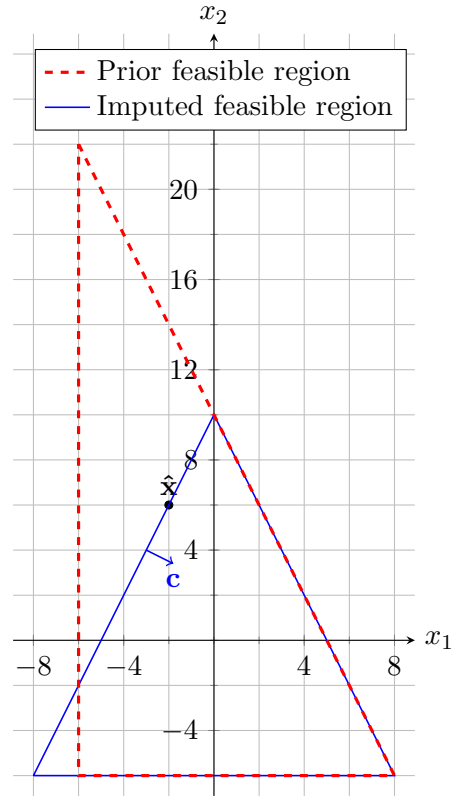

\begin{exmp}[\textbf{NLO-SD}]\label{ex:LpSd}
Let the norm in the objective function \eqref{Lpin1obj} be the Euclidean norm and $\bxi = \beee$ for simplicity. Let the observed solution be $\bxh = (-2, 6)$ and the remaining problem data be
\begin{align*}
\bAh =
\begin{pmatrix}
1 & 0 \\
0 & 1 \\
-2 & -1
\end{pmatrix}, \;
\bb =
\begin{pmatrix}
-6 \\
-6 \\
-10
\end{pmatrix}.
\end{align*}
The prior feasible region defined by $(\bAh, \bb)$ is shown in Figure \ref{fig:LpSd}. We find a solution by applying Theorem \ref{thm:LpAlg}. Since $\bxh$ is feasible with respect to $(\bAh,\bb)$, $g_i = 0$ for all $i\in I$. Evaluating \eqref{Lp2act} for each $i \in I$, we get $\bff = (0.63, 1.90, 1.26)$, which means that $i^* = 1$. As a result, the first constraint will be adjusted so that $\ba_1 = \ba^f_1 = (1.2,-6)$, while the other two constraints are unchanged. In other words, the optimal solution of the IO model NLO-SD only needs to adjust a single constraint to put $\hat\bx$ on the boundary of the imputed feasible region, which is possible because there are no inhibitory side constraints on $\bA$. The cost vector that makes $\hat\bx$ optimal is perpendicular to the first constraint, as shown in Figure \ref{fig:LpSd}.
\end{exmp}

\section{Robust linear optimization with interval uncertainty}\label{sec:iu}

In this section, we consider a robust linear optimization problem with interval uncertainty. Let $J_i \subseteq J$ index the coefficients in the $i$-th row of $\bA$ that are subject to interval uncertainty, which is defined by the parameters $\balpha_i$, which are given for all $i \in I$. Let $\ba_{i}$ and $b_i$ also be given for all $i \in I$. The robust problem is:
\begin{equation}\label{Iu}
\begin{aligned}
\minimize_{\bx} \;\;\; & \sum_{j\in J}c_j x_j \\
\mbox{subject to} \;\;\; & \sum_{j\in J_i} \tilde a_{ij} x_j + \sum_{j\in J\setminus J_i} a_{ij} x_j \geq b_i, \quad \forall \tilde a_{ij} \in [a_{ij} - \alpha_{ij}, a_{ij} + \alpha_{ij}], i\in I.
\end{aligned}
\end{equation}
Each constraint $i\in I$ can be written as $\sum_{j\in J}a_{ij} x_j - \sum_{j\in J_i} \alpha_{ij}|x_j| \geq b_i$. Equivalently this can be written as $\sum_{j\in J} \bar{a}_{ij}(\balpha_i, \bx) x_j \geq b_i$, where
\begin{align}
\label{eq:iuabar}
\bar{a}_{ij}(\balpha_i, \bx) & = \left\{
\begin{array}{ll}
a_{ij} - \sgn({x_j})\alpha_{ij} & \text{if } j\in J_i, \\
a_{ij} & \text{if } j\in J\setminus J_i,
\end{array}
\right.  \forall \balpha_i \geq \bzero, i\in I, \bx\in\mathbb{R}^n.
\end{align}
Formulation~\eqref{Iu} can be linearized as \citep{bental2000}:
\begin{subequations}\label{Iu2}
\begin{align}
\minimize_{\bx,\bu} \;\;\; & \sum_{j \in J} c_j x_j \\
\mbox{subject to} \;\;\; & {\alpha}_{ij}x_j + u_{ij} \geq 0, \quad \forall j\in J_i, i\in I, \label{Iu2a} \\
& -{\alpha}_{ij}x_j + u_{ij} \geq 0, \quad \forall j\in J_i, i\in I,  \\
& \sum_{j \in J} a_{ij} x_{j} - \sum_{j\in J_i}u_{ij} \geq b_i, \quad \forall i\in I. \label{Iu2c}
\end{align}
\end{subequations}

The corresponding inverse problem of formulation~\eqref{Iu2} aims to impute the $\balpha$ parameters, given the nominal constraint matrix $\bA$. Viewed through the lens of Section~\ref{sec:nominal}, this can be thought of as a special case of the recovery of constraint parameters for a non-robust linear optimization problem. However, the inverse problem here is additionally constrained by the requirement that $\balpha \geq \bzero$ and that many of the constraint coefficients in formulation~\eqref{Iu2} are fixed. Moreover, the general inverse linear optimization method will not by itself recognize that most constraints of \eqref{Iu2} are auxiliary, and may possibly set the cost vector perpendicular to an auxiliary constraint. The general method will also not necessarily prevent trivial solutions: the realization of a constraint's left-hand-side coefficients depends on the orthant containing a given solution, as indicated by \eqref{eq:iuabar}, and therefore an IO method that does not take account of the forward problem's robust structure ignores the possibility that a constraint's realization is trivial in orthants aside from the one containing the observed solution. All of these differences from the IO problems in Section~\ref{sec:nominal} can be accommodated by modifying NLO-DG and NLO-SD and their solution methods. However, we instead develop methods specifically addressing the robust formulation~\eqref{Iu2} to yield more precise insights.

Given $\ba_i, b_i$ and $J_i$ for all $i \in I$, and a feasible $\hat\bx$ for the nominal problem (i.e., formulation~\eqref{Iu} with $J_i = \varnothing$ for all $i$), the goal of the IO problem is to determine nonnegative parameters $\balpha_i$ for all $i \in I$ defining the uncertainty set, such that $\bxh$ is minimally suboptimal for some nonzero cost vector. Without loss of generality, we assume that every row has at least one coefficient that is subject to uncertainty (if we did not make this assumption, we would define $\hat{I} := \{i\in I \colon J_i \ne \varnothing\}$ and replace $I$ with $\hat{I}$ throughout the following development where appropriate):

\begin{assumption}\label{assumption:Iu1}
$J_i \ne \varnothing, \; \forall i\in I$.
\end{assumption}

In the context of the IO problem that corresponds to the forward problem $\eqref{Iu}$, a trivial solution is one in which either $\bc = \bzero$ or $\bab_i(\balpha_i, \bx) = \bzero$ for some $i\in I, \bx\in\mathbb{R}^n$. We will make an additional assumption to prevent the IO models from returning trivial solutions:
\begin{assumption}\label{assumption:iu_trivial}
For all $i\in I$, $b_i > 0$ or $a_{ij} \ne 0$ for some $j\in J\setminus J_i$.
\end{assumption}
\noindent
This assumption requires that for each constraint of the forward problem, either the right-hand-side coefficient is positive (as in Assumption \ref{assumption:nominal_trivial_dg}), or one of the left-hand-side coefficients is known with certainty to be nonzero.

As before, we consider two variants of this problem: the first IO model in Section \ref{sec:IuDg} minimizes the duality gap, whereas the IO model in Section \ref{sec:IuSD} assumes a zero duality gap. Section \ref{sec:IuEx} provides numerical examples.

\subsection{Inverse optimization models}\label{sec:IuDg}

Let $\lambda_{ij}, \mu_{ij}, \pi_i$ be the dual variables corresponding to constraints \eqref{Iu2a}-\eqref{Iu2c}, respectively. The following formulation minimizes the duality gap, subject to convex constraints $\balpha \in \bOmega$, while enforcing primal and dual feasibility. Given Assumption \ref{assumption:Iu1}, the vectors $\balpha_i$ all have dimension at least one.
\begin{subequations}\label{Iuin2}
\begin{align}
\textrm{RLO-IU-DG:}\quad \minimize_{\balpha,\bc,\bu,\bpi,\blambda,\bmu} \;\;\; & \sum_{j \in J} c_{j} \hat{x}_{j} - \sum_{i\in I} b_i\pi_i \label{Iuin2dualitygap} \\
\mbox{subject to} \;\;\; & \balpha \in \bOmega, \label{Iuin2side} \\
& {\alpha}_{ij}\hat{x}_j + u_{ij} \geq 0, \quad \forall j\in J_i, i\in I, \label{Iuin1p1} \\
& -{\alpha}_{ij}\hat{x}_j + u_{ij} \geq 0, \quad \forall j\in J_i, i\in I, \label{Iuin1p2} \\
& \sum_{j \in J} a_{ij} \hat{x}_{j} - \sum_{j\in J_i}u_{ij} \geq b_i, \quad \forall i\in I, \label{Iuin1p3} \\
& \alpha_{ij} \geq 0, \quad \forall j\in J_i, i\in I, \label{Iuin1u} \\
& \sum_{i\in I}\pi_i = 1, \label{Iuin1norm} \\
& \sum_{i\in I}a_{ij} \pi_i + \sum_{i\in I \colon j\in J_i}\alpha_{ij}( \lambda_{ij} - {\mu}_{ij}) = c_j, \quad \forall j\in J, \label{Iuin1d1} \\
& \pi_i = {\lambda}_{ij} + {\mu}_{ij}, \quad \forall j\in J_i, i\in I, \label{Iuin1d2} \\
& \pi_i, {\lambda}_{ij}, {\mu}_{ij} \geq 0, \quad \forall j\in J_i, i\in I. \label{Iuin1d3}
\end{align}
\end{subequations}
RLO-IU-DG is constructed in a conceptually similar manner as NLO-DG. Constraints \eqref{Iuin1p1}-\eqref{Iuin1p3} and \eqref{Iuin1d1}-\eqref{Iuin1d3} represent primal feasibility and dual feasibility, respectively. To prevent the trivial solution $(\bc,\bpi) = (\bzero,\bzero)$ from being feasible, we again include the normalization constraint \eqref{Iuin1norm}. All constraints of RLO-IU-DG are linear except for the convex side constraints \eqref{Iuin2side} and the bilinear dual feasibility constraint \eqref{Iuin1d1}, but nevertheless we will be able to determine an efficient solution method.

First, we show that feasibility of RLO-IU-DG is determined by whether or not $\bOmega$ allows for primal feasibility of the forward problem.
\begin{pro}\label{pro:IuTwoFeas}
RLO-IU-DG is feasible if and only if there exists nonnegative $\balpha\in \bOmega$ such that $\sum_{j \in J} a_{ij} \hat{x}_{j} - \sum_{j \in J_i} \alpha_{ij} \lvert\hat{x}_{j}\rvert \geq b_{i}$ for all $i\in I$.
\end{pro}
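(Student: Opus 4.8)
The plan is to prove the two implications separately, in each case exploiting the fact that the linearizing auxiliary variables $\bu$ encode the absolute-value terms $\lvert\hat{x}_j\rvert$. The single observation that powers both directions is that, for nonnegative $\alpha_{ij}$, constraints \eqref{Iuin1p1} and \eqref{Iuin1p2} together are equivalent to $u_{ij} \geq \alpha_{ij}\lvert\hat{x}_j\rvert$, since $u_{ij}$ must dominate both $-\alpha_{ij}\hat{x}_j$ and $\alpha_{ij}\hat{x}_j$.

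For the forward direction, I would take any feasible point $(\balpha,\bc,\bu,\bpi,\blambda,\bmu)$ of RLO-IU-DG and use the bound $u_{ij}\geq\alpha_{ij}\lvert\hat{x}_j\rvert$ in constraint \eqref{Iuin1p3}, where the $u_{ij}$ appear with a negative sign; this yields $\sum_{j\in J}a_{ij}\hat{x}_j - \sum_{j\in J_i}\alpha_{ij}\lvert\hat{x}_j\rvert \geq \sum_{j\in J}a_{ij}\hat{x}_j - \sum_{j\in J_i}u_{ij} \geq b_i$ for every $i\in I$. Since \eqref{Iuin2side} and \eqref{Iuin1u} ensure $\balpha\in\bOmega$ is nonnegative, this $\balpha$ is exactly the witness required by the proposition. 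For the converse, starting from a nonnegative $\balpha\in\bOmega$ that satisfies the compact feasibility condition, I would construct a complete feasible point: set $u_{ij}=\alpha_{ij}\lvert\hat{x}_j\rvert$, which makes \eqref{Iuin1p1}--\eqref{Iuin1p2} hold automatically and reduces \eqref{Iuin1p3} to the assumed inequality; set $\bpi=\beee_{\hat{i}}$ for an arbitrary $\hat{i}\in I$ to satisfy the normalization \eqref{Iuin1norm}; and for each $j\in J_i$ set $\lambda_{ij}=\pi_i$, $\mu_{ij}=0$ to satisfy \eqref{Iuin1d2} and the nonnegativity \eqref{Iuin1d3}. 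The remaining dual constraint \eqref{Iuin1d1} is an equality that carries no further restriction on $\bc$, so it simply defines each $c_j$; this produces a valid $\bc$ and completes the feasible point.

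The routine algebraic checks --- the two sign inequalities $\hat{x}_j+\lvert\hat{x}_j\rvert\geq0$ and $\lvert\hat{x}_j\rvert-\hat{x}_j\geq0$, and the substitution into \eqref{Iuin1p3} --- are immediate, so I do not expect a genuine obstacle here; this mirrors the remark preceding Proposition \ref{pro:NLOSDfeas} that the analogous nominal result is straightforward. The one point worth stating carefully is the role of $\bc$: it is a free decision variable pinned down entirely by the dual-feasibility equation \eqref{Iuin1d1}, so once a primal-feasible $\balpha$ is available, the normalization and dual-feasibility constraints impose no additional solvability requirement and can always be met. Making this explicit is what allows the feasibility question to collapse to the purely primal condition in the proposition statement.
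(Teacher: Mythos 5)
Your proof is correct and takes essentially the same approach as the paper's: both directions rest on the observation that, for nonnegative $\balpha$, constraints \eqref{Iuin1p1}--\eqref{Iuin1p2} amount to $u_{ij}\geq\alpha_{ij}\lvert\hat{x}_j\rvert$ (the paper argues the forward direction contrapositively, you argue it directly), and the converse is proved by the same witness construction with $u_{ij}=\alpha_{ij}\lvert\hat{x}_j\rvert$, $\bpi=\beee_{\hat{i}}$, and $\bc$ read off from \eqref{Iuin1d1}. The only cosmetic difference is your choice $(\lambda_{ij},\mu_{ij})=(\pi_i,0)$, whereas the paper splits $(\lambda_{ij},\mu_{ij})$ according to the sign of $\hat{x}_j$ so that $\bc$ comes out as the realized coefficient vector $\bab_{\hat{i}}(\balpha_{\hat{i}},\bxh)$; both choices satisfy \eqref{Iuin1d2}--\eqref{Iuin1d3}, so feasibility is established either way.
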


Building on Proposition~\ref{pro:IuTwoFeas}, an analogous result to Theorem~\ref{thm:LpMip} can be derived, which means that RLO-IU-DG can be solved by solving $m$ convex optimization problems. The interpretation of the following theorem is conceptually identical to the interpretation of Theorem \ref{thm:LpMip}. The proofs of the two results both involve substituting the dual feasibility constraint with $\bc$ into the objective function to show that there exists an optimal solution with binary $\bpi$. In this case, however, the presence of additional dual variables corresponding to the auxiliary constraints of the robust forward problem necessitates additional algebraic analysis to arrive at  a similar result.

\begin{thm}
\label{thm:IuDg}
For all $\hat{i}\in I$, let $t_{\hat{i}}$ be the optimal value and let $\balpha^{(\hat{i})}$ be an optimal solution for the problem
\begin{equation}
\begin{aligned}\label{IuMipt}
\minimize_{\balpha} \;\;\; & \sum_{j\in J} a_{\hat{i}j} \hat{x}_j - \sum_{j\in J_{\hat{i}}}\alpha_{\hat{i}j}|\hat{x}_j| - b_{\hat{i}} \\
\mbox{\emph{subject to}} \;\;\; & \sum_{j\in J} a_{ij} \hat{x}_j - \sum_{j\in J_i}\alpha_{ij}|\hat{x}_j| \geq b_i, \quad \forall i\in I, \\
& \balpha \in \bOmega, \balpha \geq \bzero.
\end{aligned}
\end{equation}
Let $i^* \in \argmin_{\hat{i}\in I} \{t_{\hat{i}}\}$, and let $\balpha^* = \balpha^{(i^*)}$. Then the optimal value of RLO-IU-DG is $t_{i^*}$ and there exists an optimal solution with
\begin{align}
\balpha_i &= \balpha^*_i, \quad \forall i\in I, \label{IuSdopt1} \\
\bc &= \bab_{i^*}(\balpha^*_{i^*}, \bxh), \label{IuSdopt2} \\
\bpi &= \beee_{i^*}, \label{IuSdopt3}
\end{align}
where, given Assumption \ref{assumption:iu_trivial}, $\bc \ne \bzero$ and $\bab_i(\balpha_i,\bx) \ne \bzero$ for all $i\in I, \bx\in\mathbb{R}^n$.
\end{thm}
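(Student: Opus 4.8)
The plan is to follow the template of Theorem~\ref{thm:LpMip}: substitute the dual feasibility equation~\eqref{Iuin1d1} into the objective~\eqref{Iuin2dualitygap} to eliminate $\bc$, then show that the resulting problem admits an optimal solution with $\bpi$ a unit vector, which reduces RLO-IU-DG to the $m$ convex programs~\eqref{IuMipt}. After substituting $c_j$, the objective becomes $\sum_{i\in I}\pi_i\left(\sum_{j\in J}a_{ij}\hat{x}_j - b_i\right) + \sum_{i\in I}\sum_{j\in J_i}\alpha_{ij}\hat{x}_j(\lambda_{ij}-\mu_{ij})$, so the extra dual variables $\blambda,\bmu$ that are absent in the nominal case now appear explicitly and must be dealt with.

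The key step, and the place where this proof departs from Theorem~\ref{thm:LpMip}, is to minimize out $\blambda$, $\bmu$, and the auxiliary primal variables $\bu$. For fixed $\bpi$ and $\balpha$, constraint~\eqref{Iuin1d2} forces $\lambda_{ij}+\mu_{ij}=\pi_i$ with both nonnegative by~\eqref{Iuin1d3}, so $\lambda_{ij}-\mu_{ij}\in[-\pi_i,\pi_i]$; since $\alpha_{ij}\ge 0$, the term $\alpha_{ij}\hat{x}_j(\lambda_{ij}-\mu_{ij})$ is minimized by the sign choice $\lambda_{ij}-\mu_{ij}=-\sgn(\hat{x}_j)\pi_i$, yielding $-\alpha_{ij}|\hat{x}_j|\pi_i$. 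Independently, $\bu$ appears only in the primal-feasibility constraints~\eqref{Iuin1p1}--\eqref{Iuin1p3}; constraints \eqref{Iuin1p1}--\eqref{Iuin1p2} give $u_{ij}\ge\alpha_{ij}|\hat{x}_j|$, and since $u_{ij}$ enters~\eqref{Iuin1p3} with a negative sign the tightest feasible choice is $u_{ij}=\alpha_{ij}|\hat{x}_j|$, reducing~\eqref{Iuin1p3} to the robust-surplus condition $\sum_{j\in J}a_{ij}\hat{x}_j - \sum_{j\in J_i}\alpha_{ij}|\hat{x}_j|\ge b_i$ (consistent with Proposition~\ref{pro:IuTwoFeas}). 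After these eliminations the objective collapses to $\sum_{i\in I}\pi_i\,s_i(\balpha)$, a convex combination of the robust surpluses $s_i(\balpha)=\sum_{j\in J}a_{ij}\hat{x}_j-\sum_{j\in J_i}\alpha_{ij}|\hat{x}_j|-b_i$.

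From here the argument mirrors the nominal case. For fixed $\balpha$ the optimal $\bpi$ is $\beee_{i^*}$ with $i^*\in\argmin_i s_i(\balpha)$, so interchanging the minimizations (the feasible set for $\balpha$ is the same for every index) gives $\min_{i\in I}\min_{\balpha}\{s_i(\balpha)\colon s_k(\balpha)\ge 0\ \forall k,\ \balpha\in\bOmega,\ \balpha\ge\bzero\}=\min_{i\in I}t_i=t_{i^*}$, which is exactly~\eqref{IuMipt}. Setting $\bpi=\beee_{i^*}$ forces $\lambda_{ij}=\mu_{ij}=0$ for $i\ne i^*$, and substituting the optimal sign choice for $i=i^*$ back into~\eqref{Iuin1d1} recovers $c_j=a_{i^*j}-\sgn(\hat{x}_j)\alpha^*_{i^*j}$ for $j\in J_{i^*}$ and $c_j=a_{i^*j}$ otherwise, i.e.\ $\bc=\bab_{i^*}(\balpha^*_{i^*},\bxh)$ by~\eqref{eq:iuabar}, establishing~\eqref{IuSdopt1}--\eqref{IuSdopt3}.

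It remains to verify nontriviality under Assumption~\ref{assumption:iu_trivial}, which I expect to be the most delicate part. If some $a_{ij}\ne 0$ with $j\in J\setminus J_i$, then $\bab_{ij}(\balpha_i,\bx)=a_{ij}\ne 0$ for every $\bx$ and we are done. Otherwise $b_i>0$, and I would argue by contradiction: if $\bab_i(\balpha^*_i,\bx')=\bzero$ for some $\bx'$, then $a_{ij}=0$ for $j\in J\setminus J_i$ and $a_{ij}=\sgn(x'_j)\alpha^*_{ij}$ for $j\in J_i$; feeding this into the surplus at $\bxh$ gives $\bab_i(\balpha^*_i,\bxh)^\intercal\bxh=\sum_{j\in J_i}\alpha^*_{ij}\bigl(\sgn(x'_j)\hat{x}_j-|\hat{x}_j|\bigr)\le 0$, since each summand is nonpositive. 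This contradicts $\bab_i(\balpha^*_i,\bxh)^\intercal\bxh=s_i(\balpha^*)+b_i\ge b_i>0$, which holds because $\balpha^*$ satisfies the robust-surplus feasibility condition. Hence $\bab_i(\balpha^*_i,\bx)\ne\bzero$ for all $i$ and $\bx$, and in particular $\bc=\bab_{i^*}(\balpha^*_{i^*},\bxh)\ne\bzero$. The main obstacle throughout is bookkeeping the coupled auxiliary variables $\bu,\blambda,\bmu$ so that their joint optimization cleanly reproduces the $|\hat{x}_j|$ terms of the robust surplus; once that is done, the reduction to~\eqref{IuMipt} is routine.
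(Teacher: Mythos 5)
Your proof is correct and follows the paper's overall architecture --- eliminate $\bc$ by substituting \eqref{Iuin1d1} into the objective, set $u_{ij}=\alpha_{ij}\lvert\hat{x}_j\rvert$ to reduce primal feasibility to the robust-surplus inequalities, restrict $\bpi$ to unit vectors, and thereby decompose RLO-IU-DG into the $m$ convex programs \eqref{IuMipt} --- but you justify the key unit-vector step differently. The paper keeps $(\bpi,\blambda,\bmu)$ together after the substitution, observes that the objective is bilinear over the disjoint sets $P\ni\balpha$ and $D\ni(\bpi,\blambda,\bmu)$, and invokes a bilinear-programming theorem (optimal solutions occur at vertices of the bounded polyhedron $D$, with vertices of $D$ having binary $\bpi$) before introducing $s_{ij}=\lambda_{ij}-\mu_{ij}$ via Lemma \ref{lem:sij} and setting $s_{ij}=-\sgn(\hat{x}_j)\pi_i$ by inspection. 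You instead minimize out $(\blambda,\bmu)$ first, for arbitrary fixed $(\bpi,\balpha)$, which collapses the objective to $\sum_{i\in I}\pi_i s_i(\balpha)$; binary $\bpi$ then follows from the elementary fact that a linear function over the unit simplex attains its minimum at a vertex, so no external vertex theorem is needed. Both arguments are sound; yours is more self-contained, and the ordering you use is in fact the one the paper itself adopts later in the proof of Theorem \ref{thm:CcuDg}, where $\bvarphi$ is optimized out before $\bpi$ is restricted to be binary. Your recovery of $\bc=\bab_{i^*}(\balpha^*_{i^*},\bxh)$ matches \eqref{IuSdopt2}, and your nontriviality argument --- immediate when $a_{ij}\ne 0$ for some $j\in J\setminus J_i$, and by contradiction from $s_i(\balpha^*)\ge 0$ and $b_i>0$ otherwise --- is essentially the argument the paper writes out in its proof of Theorem \ref{thm:IuAlg} and only summarizes for the present theorem, correctly specialized here using feasibility of $\balpha^*$ for \eqref{IuMipt}.
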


\begin{rmk}\label{rmk:IuDg}
Theorem \ref{thm:IuDg} shows that an optimal solution to the nonconvex inverse problem RLO-IU-DG can be found by solving $m$ convex optimization problems, which become linear whenever the constraints $\balpha \in \bOmega$ can be written linearly.
\end{rmk}

Although both Theorems \ref{thm:LpMip} and \ref{thm:IuDg} set the cost vector perpendicular to the constraint with the minimum surplus, a difference arises in the latter case due to the structure of the robust constraints. As shown by \eqref{eq:iuabar}, the vector perpendicular to a robust constraint changes as the constraint crosses into different orthants (see Figure \ref{fig:IuEx}). Therefore equation \eqref{IuSdopt2} more specifically sets the cost vector perpendicular to the part of constraint $i^*$ that is contained in the same orthant as $\bxh$.

\subsubsection{Enforcing strong duality}\label{sec:IuSD}

As in Section \ref{sec:LpSd}, we propose an alternative IO model that minimizes the weighted deviation of the uncertainty set parameters $\balpha_i$ from given prior values $\balphah_i$ while enforcing strong duality, and primal and dual feasibility, without side constraints on $\balpha$. We make an additional assumption that there is at least one column $j\in J$ that has an uncertain coefficient and $\hat{x}_j \ne 0$:
\begin{assumption}\label{assumption:Iu2}
There exists some $\hat{i} \in I$ and $j\in J_{\hat{i}}$ such that $\hat{x}_j \ne 0$.
\end{assumption}
\noindent
This assumption is slightly stronger than the assumption $\bxh\ne\bzero$ that we made to guarantee feasibility of NLO-SD, and is needed in order for the values of the unknown parameters to affect the optimality of the observed solution: without this assumption, all $\alpha_{ij}$ would be multiplied by zero and modifying $\balpha$ would not change the surplus of any constraint with respect to $\bxh$.

We propose the following IO model. Given Assumption \ref{assumption:Iu1}, the vectors $\balpha_i$ in the objective function all have dimension at least one.
\begin{subequations}\label{Iuin1}
\begin{align}
\textrm{RLO-IU-SD:}\quad \minimize_{\balpha,\bc,\bu,\bpi,\blambda,\bmu} \;\;\; & \sum_{i\in I}\xi_i \lVert\balpha_i - \balphah_i \rVert \label{Iuin1obj} \\
\mbox{subject to} \;\;\; & \sum_{j \in J} c_{j} \hat{x}_{j} - \sum_{i\in I} b_i \pi_i = 0, \label{Iuin1sd} \\
& \eqref{Iuin1p1}-\eqref{Iuin1d3}.
\end{align}
\end{subequations}
In the objective function \eqref{Iuin1obj}, $\lVert \cdot \rVert$ is an arbitrary norm. Constraint \eqref{Iuin1sd} represents strong duality.

First, we show that feasibility of RLO-IU-SD is entirely determined by feasibility of $\bxh$ with respect to the nominal problem.
\begin{pro}
\label{pro:IuFeas}
Given Assumption \ref{assumption:Iu2}, RLO-IU-SD is feasible if and only if $\sum_{j \in J} a_{ij} \hat{x}_{j} \geq b_{i}$ for all $i\in I$.
\end{pro}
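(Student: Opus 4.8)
The plan is to establish the two implications separately: the forward (only-if) direction follows immediately from primal feasibility, while the reverse (if) direction requires an explicit construction that exploits the absence of side constraints on $\balpha$.

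For necessity, I would suppose $(\balpha,\bc,\bu,\bpi,\blambda,\bmu)$ is feasible for RLO-IU-SD. Constraints \eqref{Iuin1p1} and \eqref{Iuin1p2} give $u_{ij} \geq -\alpha_{ij}\hat{x}_j$ and $u_{ij} \geq \alpha_{ij}\hat{x}_j$, so together with the nonnegativity \eqref{Iuin1u} of $\alpha_{ij}$ we obtain $u_{ij} \geq \alpha_{ij}\lvert\hat{x}_j\rvert \geq 0$ for all $j\in J_i,\, i\in I$. Substituting into \eqref{Iuin1p3} then yields $\sum_{j\in J}a_{ij}\hat{x}_j \geq b_i + \sum_{j\in J_i}u_{ij} \geq b_i$ for all $i\in I$, which is exactly nominal feasibility of $\bxh$. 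This direction does not use Assumption \ref{assumption:Iu2}.

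For sufficiency, I would assume $\sum_{j\in J}a_{ij}\hat{x}_j \geq b_i$ for all $i$ and construct a feasible point. By Assumption \ref{assumption:Iu2}, fix $\hat{i}\in I$ and $j^*\in J_{\hat{i}}$ with $\hat{x}_{j^*}\neq 0$, and let $s := \sum_{j\in J}a_{\hat{i}j}\hat{x}_j - b_{\hat{i}} \geq 0$ be the surplus of constraint $\hat{i}$. The idea is to use the single free parameter $\alpha_{\hat{i}j^*}$ to shrink the $\hat{i}$-th robust constraint until $\bxh$ lies on its boundary, leaving every other constraint at its nominal values. Concretely, I would set $\alpha_{\hat{i}j^*} = s/\lvert\hat{x}_{j^*}\rvert \geq 0$ and $\alpha_{ij}=0$ otherwise, set $u_{ij}=\alpha_{ij}\lvert\hat{x}_j\rvert$ (so only $u_{\hat{i}j^*}=s$ is nonzero), and set $\bpi=\beee_{\hat{i}}$. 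A direct check shows that \eqref{Iuin1p1}--\eqref{Iuin1u}, \eqref{Iuin1norm}, and the nonnegativity in \eqref{Iuin1d3} all hold, and that the $\hat{i}$-th realized constraint is now active, i.e., $\sum_{j\in J}a_{\hat{i}j}\hat{x}_j - \sum_{j\in J_{\hat{i}}}\alpha_{\hat{i}j}\lvert\hat{x}_j\rvert = b_{\hat{i}}$.

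The only delicate step, and the one I expect to be the main obstacle, is choosing $\blambda,\bmu$ and $\bc$ so that dual feasibility \eqref{Iuin1d1}--\eqref{Iuin1d2} and strong duality \eqref{Iuin1sd} hold simultaneously. Since $\pi_i=0$ for $i\neq\hat{i}$, constraint \eqref{Iuin1d2} forces $\lambda_{ij}=\mu_{ij}=0$ for such $i$, while for $i=\hat{i}$ it reduces to $\lambda_{\hat{i}j}+\mu_{\hat{i}j}=1$. I would then read off $c_j$ from \eqref{Iuin1d1}, which is always possible because $\bc$ is a free decision variable; the substantive content is that \eqref{Iuin1sd} becomes $\sum_{j\in J}c_j\hat{x}_j = b_{\hat{i}}$, and inserting the expression for $c_j$ and cancelling $\sum_{j\in J}a_{\hat{i}j}\hat{x}_j = b_{\hat{i}}+s$ reduces this to the scalar condition $\alpha_{\hat{i}j^*}(\lambda_{\hat{i}j^*}-\mu_{\hat{i}j^*})\hat{x}_{j^*} = -s$. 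When $s>0$ this is $\lambda_{\hat{i}j^*}-\mu_{\hat{i}j^*}=-\sgn(\hat{x}_{j^*})$, which together with $\lambda_{\hat{i}j^*}+\mu_{\hat{i}j^*}=1$ has the nonnegative solution $(\lambda_{\hat{i}j^*},\mu_{\hat{i}j^*})=(0,1)$ when $\hat{x}_{j^*}>0$ and $(1,0)$ when $\hat{x}_{j^*}<0$; when $s=0$ the condition holds trivially and any split summing to one works. Verifying this sign compatibility between \eqref{Iuin1d2} and \eqref{Iuin1sd} is where the argument must be handled carefully, and it is precisely Assumption \ref{assumption:Iu2} that supplies the column $j^*$ with $\hat{x}_{j^*}\neq 0$ needed to shrink the feasible set onto $\bxh$.
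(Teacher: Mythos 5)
Your proof is correct and follows essentially the same route as the paper's: the necessity direction uses \eqref{Iuin1p1}--\eqref{Iuin1u} to show $u_{ij}\geq\alpha_{ij}\lvert\hat{x}_j\rvert\geq 0$ and hence nominal feasibility via \eqref{Iuin1p3}, and the sufficiency direction is the paper's exact construction (surplus-normalized $\alpha_{\hat{i}j^*}$ on the single column with $\hat{x}_{j^*}\neq 0$, $u_{ij}=\alpha_{ij}\lvert\hat{x}_j\rvert$, $\bpi=\beee_{\hat{i}}$, sign-matched $(\lambda,\mu)$, and $\bc$ equal to the realized row $\hat{i}$). The only cosmetic difference is that you derive the $(\lambda_{\hat{i}j^*},\mu_{\hat{i}j^*})$ split from the strong duality equation, whereas the paper posits the same values in \eqref{IuFeasSol} and verifies them.
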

\noindent
The geometric intuition underlying Proposition \ref{pro:IuFeas} is twofold: the robust feasible region is a subset of the nominal feasible region for any choice of $\balpha$, and $\bxh$ must lie on the boundary of the robust feasible region in order to be optimal. Hence if $\bxh$ is feasible for the nominal problem, it is possible to set $\balpha$ that shrinks the feasible region such that some constraint is active at $\bxh$. And conversely, if $\bxh$ is not feasible for the nominal problem, then there is no way to grow the feasible region such that $\bxh$ lies on the boundary, or is even feasible.

We now characterize an optimal solution to RLO-IU-SD and devise an efficient solution method reflecting the same geometric intuition underlying Proposition \ref{pro:IuFeas}.
\begin{thm}\label{thm:IuAlg}
For all $\hat{i}\in I$, let $t_{\hat{i}}$ be the optimal value and let $\balpha^{(\hat{i})}$ be an optimal solution for the problem
\begin{subequations}\label{Iuin1eq}
\begin{align}
\minimize_{\balpha} \;\;\; & \sum_{i\in I}\xi_i \lVert \balpha_i - \balphah_i \rVert \\
\mbox{\emph{subject to}} \;\;\; & \sum_{j\in J}a_{\hat{i}j} \hat{x}_j -\sum_{j\in J_{\hat{i}}}{{\alpha}_{\hat{i}j}|\hat{x}_{j}|} = b_{\hat{i}} \label{Iuin1eq2} \\ 
& \sum_{j\in J}a_{ij} \hat{x}_j -\sum_{j\in J_i}{{\alpha}_{ij}|\hat{x}_{j}|} \geq b_i, \quad \forall i\in I, \label{Iuin1eq3} \\
& \alpha_{ij} \geq 0, \quad \forall j\in J_i, i\in I. \label{Iuin1eq4}
\end{align}
\end{subequations}
Let $i^*\in\argmin_{\hat{i}\in I} \{t_{\hat{i}}\}$, and let $\balpha^* = \balpha^{(i^*)}$. Then the optimal value of RLO-IU-SD is $t_{i^*}$, and there exists an optimal solution with $(\balpha, \bc, \bpi)$ as stated in \eqref{IuSdopt1}-\eqref{IuSdopt3}, where, given Assumption \ref{assumption:iu_trivial}, $\bc \ne \bzero$ and $\bab_i(\balpha_i, \bx) \ne \bzero$ for all $i\in I, \bx\in\mathbb{R}^n$.
\end{thm}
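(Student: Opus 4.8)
The plan is to follow the template established by Theorem~\ref{thm:LpAlg}, replacing the role of Lemma~\ref{lem:LpFeas2} with a feasibility characterization specific to the robust formulation and reusing the duality-gap algebra already developed for Theorem~\ref{thm:IuDg}. Concretely, I would first prove the analog of Lemma~\ref{lem:LpFeas2}: that $(\balpha, \bc, \bu, \bpi, \blambda, \bmu)$ is feasible for RLO-IU-SD if and only if $\balpha \geq \bzero$ satisfies the robust feasibility constraints~\eqref{Iuin1eq3} for all $i$ and renders at least one constraint $\hat{i}$ active in the sense of~\eqref{Iuin1eq2}. This characterization is exactly what reduces the nonconvex model to the family of convex subproblems~\eqref{Iuin1eq}.

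For the forward direction, I would substitute the dual feasibility constraint~\eqref{Iuin1d1} into the duality-gap expression $\sum_j c_j \hat{x}_j - \sum_i b_i\pi_i$, exactly as in the proof of Theorem~\ref{thm:IuDg}. The key algebraic step is to control the cross-terms $\alpha_{ij}(\lambda_{ij}-\mu_{ij})\hat{x}_j$: since~\eqref{Iuin1d2}--\eqref{Iuin1d3} give $|\lambda_{ij}-\mu_{ij}| \leq \lambda_{ij}+\mu_{ij} = \pi_i$, each such term is at least $-\pi_i\alpha_{ij}|\hat{x}_j|$. This shows the gap dominates $\sum_i \pi_i(\sum_j a_{ij}\hat{x}_j - \sum_{j\in J_i}\alpha_{ij}|\hat{x}_j| - b_i)$, a $\bpi$-weighted combination of robust surpluses, each nonnegative by~\eqref{Iuin1p1}--\eqref{Iuin1p3} (which force $u_{ij}\geq\alpha_{ij}|\hat{x}_j|$ and hence robust feasibility). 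Imposing strong duality~\eqref{Iuin1sd} collapses the gap to zero, so every weighted surplus must vanish; the normalization~\eqref{Iuin1norm} guarantees some $\pi_{\hat{i}} > 0$, whose associated constraint is therefore active.

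For the converse, given $\balpha \geq \bzero$ active at some $\hat{i}$ and robustly feasible everywhere, I would construct a dual certificate: set $\bpi = \beee_{\hat{i}}$, $u_{ij} = \alpha_{ij}|\hat{x}_j|$, choose $\lambda_{ij}, \mu_{ij}$ according to $\sgn(\hat{x}_j)$ so that $\lambda_{ij}-\mu_{ij} = -\sgn(\hat{x}_j)\pi_i$, and define $\bc$ through~\eqref{Iuin1d1}, which yields $\bc = \bab_{\hat{i}}(\balpha_{\hat{i}}, \bxh)$; one then checks all constraints, including that strong duality holds because the active constraint's surplus is zero. With this characterization in hand, RLO-IU-SD is equivalent to minimizing $\sum_i \xi_i\lVert\balpha_i - \balphah_i\rVert$ over the union, across $\hat{i}\in I$, of the feasible sets of~\eqref{Iuin1eq}; hence its optimal value is $\min_{\hat{i}} t_{\hat{i}} = t_{i^*}$, attained by $\balpha^* = \balpha^{(i^*)}$, and the optimal $(\bc, \bpi)$ follow from the converse construction, giving~\eqref{IuSdopt1}--\eqref{IuSdopt3}. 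Non-triviality then follows from Assumption~\ref{assumption:iu_trivial} by the same argument as in Theorem~\ref{thm:IuDg}: a positive right-hand side or a certain-nonzero coefficient in $J\setminus J_i$ prevents $\bab_i(\balpha_i, \bx) = \bzero$, and the perpendicularity~\eqref{IuSdopt2} forces $\bc \neq \bzero$.

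The main obstacle, as in Theorem~\ref{thm:IuDg}, is the gap-to-surplus reduction in the presence of the auxiliary dual variables $\blambda, \bmu$: one must verify that the inequality $|\lambda_{ij}-\mu_{ij}| \leq \pi_i$ can be met with equality by a certificate respecting $\sgn(\hat{x}_j)$, so that on the feasible set the duality gap \emph{equals} the robust-surplus combination rather than merely bounding it below. Feasibility of the per-$\hat{i}$ subproblem~\eqref{Iuin1eq}, and hence of RLO-IU-SD, relies on Assumption~\ref{assumption:Iu2} ensuring some uncertain coefficient multiplies a nonzero $\hat{x}_j$ so that adjusting $\balpha$ can actually drive a constraint active, as already captured by Proposition~\ref{pro:IuFeas}.
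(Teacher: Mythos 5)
Your proposal is correct and takes essentially the same route as the paper: your feasibility characterization is exactly the paper's Lemma~\ref{lem:IuFeas2} (proved there by the same substitution of \eqref{Iuin1d1} into \eqref{Iuin1sd} together with the bound $\lvert\lambda_{ij}-\mu_{ij}\rvert\le\pi_i$ from Lemma~\ref{lem:sij}, and the same sign-based dual certificate \eqref{IuFeasSol} for the converse), which reduces RLO-IU-SD to the subproblems \eqref{Iuin1eq} and a minimum over $\hat{i}\in I$. Your non-triviality argument from Assumption~\ref{assumption:iu_trivial} is also the one the paper uses, differing only in that you organize the forward direction as a direct chain of inequalities rather than the paper's proof by contradiction.
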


\begin{rmk}\label{rmk:IuAlg}
Theorem \ref{thm:IuAlg} shows that an optimal solution to the nonconvex inverse problem RLO-IU-SD can be found by solving $m$ convex problems (linear with appropriate choice of $\lVert\cdot\rVert$).
\end{rmk}

As in NLO-SD, an optimal solution to RLO-IU-SD requires that at least one constraint of the forward problem be active: for all $\hat{i}\in I$, $t_{\hat{i}}$ is the minimum value of $\sum_{i\in I}\xi_i \lVert \balpha_i - \balphah_i \rVert$ such that constraint $\hat{i}$ is set active. The parameters $\balpha$ are set such that the constraint with the minimum value of $t_{\hat{i}}$ is active and all other constraints are feasible with minimal perturbation to the prior $\balphah$, and the cost vector is set perpendicular to the active constraint. A difference between Theorems \ref{thm:LpAlg} and \ref{thm:IuAlg} is that for the former, the value of $\bA$ can be evaluated as the closed form solution to problems of the form \eqref{Lp2actbar} and \eqref{Lp2feasbar}, but in the latter the value of $\balpha$ has to be obtained as the solution to the auxiliary optimization problem \eqref{Iuin1eq}. Although problem \eqref{Iuin1eq} can be decomposed by $i$ into problems corresponding to \eqref{Lp2actbar} and \eqref{Lp2feasbar}, they would be constrained by $\balpha_i \geq \bzero$ and therefore would not have closed form solutions as projections onto a hyperplane and closed half-space, respectively.

\subsection{Numerical examples}\label{sec:IuEx}

In this section, we give numerical examples to illustrate the geometric characteristics of the solutions for RLO-IU-DG and RLO-IU-SD. These examples demonstrate how the optimal inverse solution is found and how it relates to the geometry of the robust feasible region induced by the uncertainty set parameters.

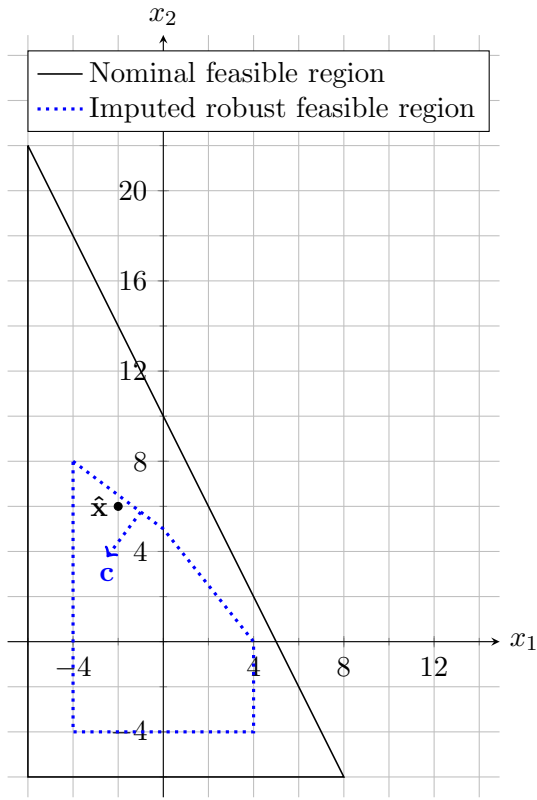
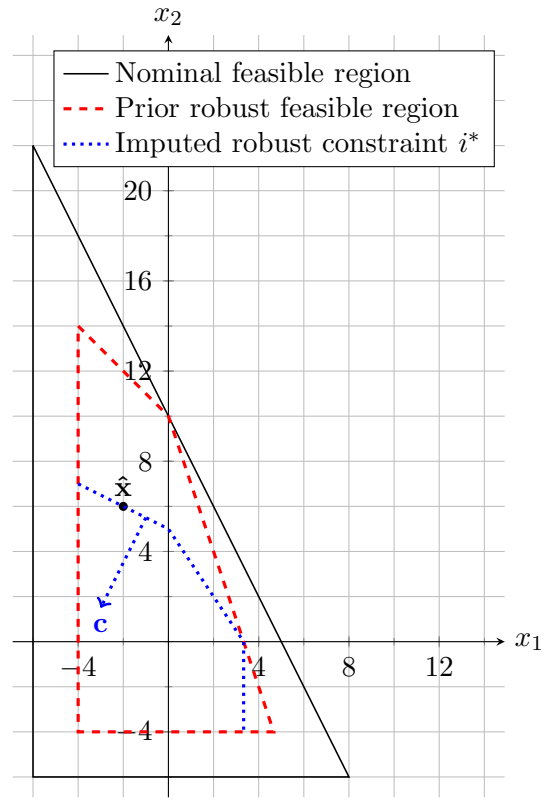
\begin{figure*}
\centering
\begin{subfigure}{0.5\textwidth}
\centering
\begin{tikzpicture}
		\begin{axis}[
			axis x line=center,
			axis y line=center,
			x=0.3cm, y=0.3cm, 
			xlabel={$x_1$},
			ylabel={$x_2$},
			xlabel style={right},
			ylabel style={above},
			xtick={-4,4,8,12},
			ytick={-4,4,8,12,16,20,24},
			minor xtick={-8,-6,-4,...,8,10,12,14}, 
			minor ytick={-6,-4,...,26}, 
			grid=minor,
			legend cell align={left}, 
			xmin=-6.9,
			xmax=14.9,
			ymin=-6.9,
			ymax=26.9]
				
	  \filldraw (-2,6) circle (1.5pt) node[left] {$\bxh$}; 
		\draw[semithick] (-6,22) -- (8,-6) -- (-6,-6) -- (-6,22); 
		\draw[very thick,blue,dotted] (-4,8) -- (0,5) -- (4,0) -- (4,-4) -- (-4,-4) -- (-4,8); 
		\draw[->, very thick, blue,dotted] (-1,5+3/4) -- (-2.5,3+3/4) node[below] {$\bc$}; 
		
		\addlegendimage{semithick}
		\addlegendentry{Nominal feasible region}
		\addlegendimage{very thick,blue,dotted}
		\addlegendentry{Imputed robust feasible region}
		
		\end{axis}
\end{tikzpicture}
\caption{Example \ref{ex:IuDg}: RLO-IU-DG.}
\label{fig:IuDg}
\end{subfigure}~~~~ 
\begin{subfigure}{0.5\textwidth}
\centering
\begin{tikzpicture}
		\begin{axis}[
			axis x line=center,
			axis y line=center,
			x=0.3cm, y=0.3cm, 
			xlabel={$x_1$},
			ylabel={$x_2$},
			xlabel style={right},
			ylabel style={above},
			xtick={-4,4,8,12},
			ytick={-4,4,8,12,16,20,24},
			minor xtick={-6,-4,...,8,10,12,14}, 
			minor ytick={-6,-4,...,26}, 
			grid=minor,
			legend cell align={left}, 
			xmin=-6.9,
			xmax=14.9,
			ymin=-6.9,
			ymax=26.9]
				
	  \filldraw (-2,6) circle (1.5pt) node[above] {$\bxh$}; 
		\draw[semithick] (-6,22) -- (8,-6) -- (-6,-6) -- (-6,22); 
		\draw[very thick,red,dashed] (-4,14) -- (0,10) -- (10/3,0) -- (14/3,-4) -- (-4,-4) -- (-4,14); 
		\draw[very thick,blue,dotted] (-4,7) -- (0,5) -- (10/3,0) -- (10/3,-4); 
		\draw[->, very thick, blue,dotted] (-1,5.5) -- (-3,1.5) node[below] {$\bc$}; 
		
		\addlegendimage{semithick}
		\addlegendentry{Nominal feasible region}
		\addlegendimage{very thick,red,dashed}
		\addlegendentry{Prior robust feasible region}
		\addlegendimage{very thick,blue,dotted}
		\addlegendentry{Imputed robust constraint $i^*$}
		
		\end{axis}
\end{tikzpicture}
\caption{Example \ref{ex:IuSd}: RLO-IU-SD.}
\label{fig:IuSd}
\end{subfigure}
\caption{Numerical examples for the interval uncertainty IO models. Both examples share the same observed solution and nominal feasible region.}
\label{fig:IuEx}
\end{figure*}

\begin{exmp}[\textbf{RLO-IU-DG}]\label{ex:IuDg}
Consider the nominal problem
\begin{align*}
\minimize_{\bx} \;\;\; & c_1 x_1 + c_2 x_2 \\
\mbox{subject to} \;\;\; & x_1 \geq -6, \\
& x_2 \geq -6, \\
& -2 x_1 - x_2 \geq -10.
\end{align*}
Let the constraints and variables be indexed by $I = \{1,2,3\}$ and $J = \{1,2\}$ respectively, and let the coefficients subject to uncertainty be defined by $J_1 = \{1\}, J_2 = \{2\}, J_3  = \{1,2\}$.

Let $\bxh = (-2,6)$ be the observed solution, and let the side constraints be defined by
\begin{align*}
\bOmega = \left\{\balpha \colon \alpha_{ij} \geq 0.5, \forall j\in J_i, i\in I ; \sum_{i\in I}\sum_{j\in J_i}\alpha_{ij} \le 2.5 \right\}.
\end{align*}
By Proposition \ref{pro:IuTwoFeas}, RLO-IU-DG is feasible if and only if $\bxh$ is robust feasible with respect to some $\balpha \in \bOmega$; in this example, this requirement can be met by $\alpha_{ij} = 0.5$ for all $j\in J_i, i\in I$, so the IO problem is feasible.

The nominal and imputed robust feasible regions are shown in Figure~\ref{fig:IuDg}. Applying Theorem \ref{thm:IuDg}, we find that $\bt = (2, 6, 1)$, so $i^* = 3$. An optimal solution of \eqref{IuMipt} corresponding to $i^*$ has $\alpha^*_{11} = 0.5$, $\alpha^*_{22} = 0.5$, and $\balpha^*_3 = (0.5,1)$. The robust counterpart of the third constraint is equivalent to $-2 x_1 - x_2 - 0.5|x_1| - |x_2| \geq -10$, thus the realization of the constraint is different in each quadrant. All three constraints have a positive surplus, meaning that the observed solution could not be made exactly optimal. The minimum duality gap is obtained by the cost vector $\bc = (-1.5,-2)$, which is perpendicular to the third constraint in the same quadrant as $\bxh$.
\end{exmp}

\begin{exmp}[\textbf{RLO-IU-SD}]\label{ex:IuSd}
Let the observed solution, nominal problem, and index sets be the same as in Example \ref{ex:IuDg}. Let the robust optimization problem have the given prior parameters $\hat{\alpha}_{11} = 0.5, \hat{\alpha}_{22} = 0.5, \balphah_{3} = (1,0)$. The nominal and robust (assuming $\hat\balpha$) feasible regions are shown in Figure~\ref{fig:IuSd}; in particular, the robust counterpart of the third constraint is equivalent to $-2 x_1 - x_2 - |x_1| \geq -10$.

Given this forward problem, the corresponding IO problem RLO-IU-SD is feasible, since $\bxh$ is feasible for the nominal problem (see Proposition \ref{pro:IuFeas}). For simplicity, we use the $L_1$ norm and the weight vector $\bxi = \beee$ in the objective function. Applying Theorem \ref{thm:IuAlg}, we find $\bt = (1.5, 1.5, 1)$, so constraint $i^* = 3$ will be set active. An optimal solution of formulation \eqref{Iuin1eq} for $i^*$ has $\balpha^*_1 = \balphah_1$ and $\balpha^*_2 = \balphah_2$ (since $\bxh$ is feasible for the prior $\balphah$), which were the same values imputed in Example~\ref{ex:IuDg}. For the third constraint, we find $\balpha^*_3 = (1, 1)$, which is a ``larger'' uncertainty set (i.e., smaller feasible region) than the one imputed in Example~\ref{ex:IuDg}. The robust counterpart of the third constraint then becomes $-2 x_1 - x_2 - |x_1| - |x_2| \geq -10$, and $\bxh$ satisfies this constraint with equality in the second quadrant. Accordingly, the imputed cost vector $\bc = \bc^3 = (-1,-2)$ is perpendicular to the third constraint in the same quadrant as $\bxh$.
\end{exmp}

\section{Robust linear optimization with cardinality constrained uncertainty}\label{sec:ccu}

In this section, we consider a robust linear optimization problem with a cardinality constrained uncertainty set \citep{bertsimas2004}, assuming a nearly identical setup as in the previous section. For each constraint $i \in I$, this uncertainty set bounds the number of uncertain coefficients $\tilde a_{ij}$ that can deviate from their nominal value $a_{ij}$ within the range $[a_{ij} - \alpha_{ij}, a_{ij} + \alpha_{ij}]$, for all $j \in J_i$, using a budget parameter $\Gamma_i$. The robust problem is:
\begin{subequations}\label{Ccuzero}
\begin{align}
\minimize_{\bx} \;\;\; & \sum_{j\in J}c_j x_j \\
\mbox{subject to} \;\;\; & \sum_{j \in J} a_{ij} x_{j} - \max_{\substack{\{S_i\cup\{r_i\}\colon S_i\subseteq J_i, \\ |S_i|=\lfloor\Gamma_i\rfloor, r_i\in J_i\setminus S_i \}}}
\left\{\sum_{j\in S_i}\alpha_{ij} \lvert x_{j}\rvert + (\Gamma_i - \lfloor \Gamma_i\rfloor)\alpha_{i r_i} \lvert x_{r_i}\rvert \right\} \geq b_{i}, \quad \forall i\in I. \label{Ccu1Constr}
\end{align}
\end{subequations}
For convenience, we refer to the embedded maximization problem in constraint~\eqref{Ccu1Constr} as the \emph{protection function}. When $\Gamma_i = |J_i|$, the protection function equals $\sum_{j\in J_i} \alpha_{ij}|\hat{x}_j|$ and \eqref{Ccu1Constr} becomes equivalent to the corresponding constraint of the robust linear program with interval uncertainty. Constraint \eqref{Ccu1Constr} can be linearized to yield the equivalent robust counterpart \citep{bertsimas2004}:
\begin{subequations}\label{Ccu}
\begin{align}
\minimize_{\bx,\by,\bz,\bu} \;\;\; & \sum_{j \in J} c_{j} x_{j} \\
\mbox{subject to} \;\;\; & \alpha_{ij} x_{j} + u_{ij} \geq 0, \quad \forall j\in J_i, i\in I, \label{Ccua} \\
& - \alpha_{ij} x_{j} + u_{ij} \geq 0, \quad \forall j\in J_i, i\in I, \\
& y_{ij} + z_i -u_{ij} \geq 0, \quad \forall j\in J_i, i\in I, \\
& \sum_{j \in J} a_{ij} x_{j} -\sum_{j\in J_i}y_{ij} - \Gamma_i z_i \geq b_{i}, \quad \forall i\in I, \label{Ccud} \\
& y_{ij}, z_i \geq 0, \quad \forall j\in J_i, i\in I.
\end{align}
\end{subequations}
Alternatively, if we let $j^i_k(\bx)$ index the $k$-th largest element in the set $\{\alpha_{ij}\lvert x_j \rvert\}_{j\in J_i}$, for all $k=1,\dots,|J_i|, i\in I$, then each constraint $i\in I$ can be written as $\sum_{j\in J} \bar{a}_{ij}(\Gamma_i, \bx) x_j \geq b_i$, where
\begin{align}\label{eq:abar_ccu}
\bar{a}_{ij}(\Gamma_i, \bx) & = \left\{
\begin{array}{ll}
a_{ij} - \sgn({x_j})\alpha_{ij} & \text{if } j = j^i_k(\bx), k = 1,\dots,\lfloor\Gamma_i\rfloor, \\
a_{ij} - \sgn({x_j})\alpha_{ij}(\Gamma_i - \lfloor\Gamma_i\rfloor) & \text{if } j = j^i_{\lfloor\Gamma_i\rfloor + 1}(\bx), \\
a_{ij} & \text{otherwise},
\end{array}
\right.
\end{align}
for all $\Gamma_i\in [0, |J_i|], i\in I, \bx\in\mathbb{R}^n$. In the development below, we will use the tractable robust counterpart \eqref{Ccu} as our forward problem, however, we will also use the representation \eqref{eq:abar_ccu} of the left-hand-side coefficients of a constraint where convenient.

Given $\ba_i, b_i, J_i$ and $\balpha_i$ for all $i \in I$, and a feasible $\hat\bx$ for the nominal problem, our IO problem aims to determine parameters $\Gamma_i \in [0, |J_i|]$ for all $i \in I$ such that $\hat\bx$ is minimally suboptimal for some nonzero cost vector. Note the slight difference from the interval uncertainty case: here, $\balpha_i$ is fixed as opposed to variable, and the new parameter $\Gamma_i$ is the primary variable in the inverse problem that determines the uncertainty set. As in previous sections, we propose two IO models: the first minimizes the duality gap, while the second requires the optimality conditions to be satisfied exactly. As in the case of interval uncertainty, the first model identifies uncertainty set parameters such that the surplus for a single constraint of the robust problem is minimized, while the second model identifies uncertainty set parameters such that some constraint is rendered active.

In the cardinality constrained uncertainty case, the nominal surplus of each constraint of the forward problem allows us to draw two conclusions about whether the inverse problem will be feasible, and for what values of the parameters $\Gamma_i$. First, we will show that nominal feasibility of $\bxh$ will be a necessary condition for feasibility of both IO models, and accordingly we make the following assumption:
\begin{assumption}\label{assumption:nomfeas}
$\sum_{j \in J} a_{ij} \hat{x}_{j} \geq b_{i}$ for all $i\in I$.
\end{assumption}
\noindent
This assumption was also implicitly necessary in the interval uncertainty case, and we only formalize it here because it will be invoked in multiple results below. As before, the assumption is needed because the robust feasible region must be a subset of the nominal feasible region, so a nominally infeasible solution cannot be rendered robust feasible by any choice of uncertainty set parameters. Second, if the nominal surplus for a constraint does not exceed the maximum value of the protection function, then the upper bound on $\Gamma_i$ such that the constraint is satisfied will be less than or equal to $|J_i|$. To identify these constraints, we define the set $\hat{I} := \{i\in I \colon 0 \le \sum_{j\in J} a_{ij}\hat{x}_j - b_i \le \sum_{j\in J_i} \alpha_{ij}|\hat{x}_j| \} \subseteq I$.

For $i\in\hat{I}$, we need to determine the maximum value of $\Gamma_i$ such that the constraint is satisfied. For all $i\in\hat{I}$, let $\Gamma_i = \underline{\Gamma}_i$ satisfy
\begin{align}
\sum_{j\in J}a_{ij}\hat{x}_j - b_i =& \sum_{k=1}^{\lfloor\Gamma_i\rfloor} \alpha_{i{j^i_k}(\bxh)}\lvert\hat{x}_{j^i_k(\bxh)} \rvert + (\Gamma_i - \lfloor\Gamma_i\rfloor)  \alpha_{i{j^i_{\lceil\Gamma_i\rceil}}(\bxh)}\lvert\hat{x}_{j^i_{\lceil\Gamma_i\rceil}(\bxh)} \rvert. \label{gammabar}
\end{align}
In other words, $\underline{\Gamma}_i\in [0,|J_i|]$ is a budget parameter such that the nominal surplus of constraint $i$ equals the value of the protection function, thereby rendering constraint $i$ active at $\hat\bx$. For each $i \in \hat{I}$, $\underline{\Gamma}_i$ can be computed as the optimal value of the following linear optimization problem:
\begin{align}\label{gammabaraux}
\underline{\Gamma}_i := \min_{\bzero \le \bw \le \beee} \left\{\sum_{j\in J_i}w_j \colon \sum_{j\in J_i} \alpha_{ij}\lvert\hat{x}_j\rvert w_j = \sum_{j\in J}a_{ij}\hat{x}_j - b_i \right\}.
\end{align}

To simplify the presentation, we will make the following assumption, which is without loss of generality.

\begin{assumption}\label{assumption:gammabar}
$\underline{\Gamma}_i$ is the unique solution to equation \eqref{gammabar}.
\end{assumption}
\noindent
Under this assumption, constraint $i$ will be infeasible for $\Gamma_i > \underline{\Gamma}_i$, and will have positive surplus for $\Gamma_i < \underline{\Gamma}_i$. This assumption is without loss of generality because equation \eqref{gammabar} would otherwise be satisfied by any $\Gamma_i \in [\underline{\Gamma}_i,\overline{\Gamma}_i]$, where
\begin{align*}
\overline{\Gamma}_i =
  \begin{cases}
   \lvert J_i \rvert & \text{if } \sum_{j\in J} a_{ij}\hat{x}_j - b_i = \sum_{j\in J_i} \alpha_{ij}|\hat{x}_j|, \\
   \underline{\Gamma}_i & \text{otherwise},
  \end{cases}
\end{align*}
and correspondingly, constraint $i$ would be infeasible for $\Gamma_i > \overline{\Gamma}_i$, and would have positive surplus for $\Gamma_i < \underline{\Gamma}_i$. The results in the remainder of this section would change by simply requiring $\Gamma_i\in[0,\overline{\Gamma}_i]$ wherever we currently have $\Gamma_i\in[0,\underline{\Gamma}_i]$, and $\Gamma_i\in[\underline{\Gamma}_i,\overline{\Gamma}_i]$ wherever we currently have $\Gamma_i = \underline{\Gamma}_i$. Because the right-hand side of equation~\eqref{gammabar} is strictly increasing in $\Gamma_i$ if $\alpha_{ij}|\hat{x}_j| > 0$ for all $j\in J_i$, it can easily be shown that multiple possible $\Gamma_i$ will satisfy equation~\eqref{gammabar} if there are one or more indices $j \in J_i$ such that $\alpha_{ij}|\hat{x}_j| = 0$ and $\sum_{j\in J}a_{ij}\hat{x}_j - b_i = \sum_{j\in J_i}\alpha_{ij}|\hat{x}_j|$.

Finally, we will make an additional assumption to prevent the IO models from returning a trivial solution, i.e., a solution in which either $\bc = \bzero$ or $\bab_i(\Gamma_i, \bx) = \bzero$ for some $i\in I, \bx\in\mathbb{R}^n$:
\begin{assumption}\label{assumption:ccu_trivial}
For all $i\in I$, $|a_{ij}| > \alpha_{ij}$ for some $j\in J_i$, or $a_{ij} \ne 0$ for some $j\in J\setminus J_i$.
\end{assumption}
\noindent
This assumption requires that each constraint of the forward problem has at least one left-hand-side coefficient for which the magnitude of the nominal value is greater than the maximum possible deviation from the nominal value, or which is known with certainty to be nonzero (as in Assumption \ref{assumption:iu_trivial}).

\subsection{Inverse optimization models}\label{sec:CcuDg}

Let $\lambda_{ij}, \mu_{ij}, \varphi_{ij}, \pi_i$ be the dual variables corresponding to constraints \eqref{Ccua}-\eqref{Ccud}, respectively. The following formulation minimizes the duality gap while enforcing convex side constraints $\bGamma \in \bOmega$, and primal and dual feasibility:
\begin{subequations}\label{Ccuin2}
\begin{align}
\textrm{RLO-CCU-DG:}\quad \minimize_{\substack{\bGamma,\bc,\bu,\by,\bz,\\\bpi,\bvarphi,\blambda,\bmu}} \;\;\; & \sum_{j \in J} c_{j} \hat{x}_{j} - \sum_{i\in I} b_i \pi_i \\ 
\mbox{subject to} \;\;\; & \bGamma \in \bOmega, \label{Ccuin2side} \\
& \alpha_{ij} \hat{x}_{j} + u_{ij} \geq 0, \quad \forall j\in J_i, i\in I, \label{Ccuin1p1} \\
& -\alpha_{ij} \hat{x}_{j} + u_{ij} \geq 0, \quad \forall j\in J_i, i\in I, \label{Ccuin1p2} \\
& y_{ij} + z_i \geq u_{ij}, \quad \forall j\in J_i, i\in I, \label{Ccuin1p3} \\
& \sum_{j \in J} a_{ij} \hat{x}_{j} - \sum_{j\in J_i}y_{ij} - \Gamma_i z_i \geq b_{i}, \quad \forall i\in I, \label{Ccuin1p4} \\
& y_{ij}, z_i \geq 0, \quad \forall j\in J_i, i\in I, \label{Ccuin1p5} \\
& 0\le \Gamma_i \le |J_i|, \quad \forall i\in I, \label{Ccuin1u} \\
& \sum_{i\in I}\pi_i = 1, \label{Ccuin1norm} \\
& \sum_{i\in I}a_{ij} \pi_i + \sum_{i\in I\colon j\in J_i}\alpha_{ij} (\lambda_{ij} - {\mu}_{ij}) = c_j, \quad \forall j\in J, \label{Ccuin1d1} \\
& \varphi_{ij} \le \pi_i, \quad \forall j\in J_i, i\in I, \label{Ccuin1d2} \\
& \varphi_{ij} = {\lambda}_{ij} + {\mu}_{ij}, \quad \forall j\in J_i, i\in I, \label{Ccuin1d3} \\
& \sum_{j\in J_i}\varphi_{ij} \le \Gamma_i \pi_i, \quad \forall i\in I, \label{Ccuin1d4} \\
& {\pi}_i, {\varphi}_{ij}, {\lambda}_{ij}, {\mu}_{ij} \geq 0, \quad \forall j\in J_i, i\in I. \label{Ccuin1d5}
\end{align}
\end{subequations}
The construction of RLO-CCU-DG parallels that of RLO-IU-DG. Constraints \eqref{Ccuin1p1}-\eqref{Ccuin1p5} and \eqref{Ccuin1d1}-\eqref{Ccuin1d5} represent primal feasibility and dual feasibility, respectively. We use the same normalization constraint \eqref{Ccuin1norm} to prevent the trivial solution $(\bc,\bpi) = (\bzero,\bzero)$ from being feasible.

While there are similarities, there are also important differences between RLO-CCU-DG and RLO-IU-DG. Notably, while both formulations have bilinear constraints, their structure is different and therefore different analysis and solution methods are required. In RLO-IU-DG, the dual feasibility constraint \eqref{Iuin1d1} was bilinear in $\balpha$ and $(\blambda, \bmu)$, and although an identical constraint appears in RLO-CCU-DG, it is linear because the parameters $\balpha$ are assumed to be known. Instead, the primal feasibility constraint \eqref{Ccuin1p4} is bilinear in $\bGamma$ and $\bz$, and the dual feasibility constraint \eqref{Ccuin1d4} is bilinear in $\bGamma$ and $\bpi$.

First, we present a result that enables us to tractably deal with the bilinearity in \eqref{Ccuin1p4}, and characterize the feasibility of RLO-CCU-DG. For convenience, we define
\begin{align*}
\bTheta = \{\bGamma : \Gamma_i \in [0,\underline\Gamma_i], i \in \hat{I}; \Gamma_i \in [0,\lvert J_i\rvert], i \in I\setminus\hat{I} \},
\end{align*}
which will be used in several results below. The set $\bTheta$ defines the allowable values of $\bGamma$ such that $\bxh$ is feasible for the forward problem, or equivalently that the value of the protection function does not exceed the nominal surplus for each constraint of the forward problem. The following lemma formalizes this:
\begin{lem}\label{lem:CcuPfEq}
If $(\bGamma,\bu,\by,\bz)$ satisfies constraints \eqref{Ccuin1p1}-\eqref{Ccuin1u}, then $\bGamma \in \bTheta$. Conversely, given Assumption \ref{assumption:nomfeas}, if $\bGamma \in \bTheta$ then there exists $(\bu,\by,\bz)$ such that $(\bGamma,\bu,\by,\bz)$ satisfies constraints \eqref{Ccuin1p1}-\eqref{Ccuin1u}.
\end{lem}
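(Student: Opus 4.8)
The plan is to recognize that constraints \eqref{Ccuin1p1}--\eqref{Ccuin1u} are exactly the linearized robust counterpart \eqref{Ccu} with the decision vector fixed at $\bx = \bxh$, so that the existence of auxiliary variables $(\bu,\by,\bz)$ is equivalent to $\bxh$ satisfying the robust constraints \eqref{Ccu1Constr}. Constraints \eqref{Ccuin1p1}--\eqref{Ccuin1p2} are equivalent to $u_{ij} \geq \alpha_{ij}|\hat{x}_j|$, and since a larger $u_{ij}$ only tightens \eqref{Ccuin1p3}, I may take $u_{ij} = \alpha_{ij}|\hat{x}_j|$ without loss. By LP duality applied constraint-by-constraint (the argument underlying the Bertsimas--Sim linearization \citep{bertsimas2004}), the minimum of $\Gamma_i z_i + \sum_{j\in J_i} y_{ij}$ over $(\by,\bz) \geq \bzero$ subject to $y_{ij} + z_i \geq \alpha_{ij}|\hat{x}_j|$ equals the protection function evaluated at $\bxh$, i.e., the right-hand side of \eqref{gammabar} viewed as a function of $\Gamma_i$, which I denote $P_i(\Gamma_i)$. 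Hence, for a fixed $\bGamma$ satisfying \eqref{Ccuin1u}, there exist $(\bu,\by,\bz)$ completing \eqref{Ccuin1p1}--\eqref{Ccuin1p5} if and only if $P_i(\Gamma_i) \leq \sum_{j\in J} a_{ij}\hat{x}_j - b_i$ for all $i\in I$.

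The remaining task is to show that this scalar surplus condition holds for every $i$ precisely when $\bGamma \in \bTheta$. First I note that $P_i(\cdot)$ is continuous and nondecreasing on $[0,|J_i|]$, with $P_i(0) = 0$ and $P_i(|J_i|) = \sum_{j\in J_i}\alpha_{ij}|\hat{x}_j|$. For $i\in\hat{I}$, the nominal surplus $\sum_{j\in J} a_{ij}\hat{x}_j - b_i$ lies in $[0,\sum_{j\in J_i}\alpha_{ij}|\hat{x}_j|]$ by the definition of $\hat{I}$, so by monotonicity the condition $P_i(\Gamma_i) \leq \sum_{j\in J} a_{ij}\hat{x}_j - b_i$ holds exactly when $\Gamma_i \leq \underline{\Gamma}_i$, where $\underline{\Gamma}_i$ is the unique root of \eqref{gammabar} guaranteed by Assumption \ref{assumption:gammabar}. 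For $i\in I\setminus\hat{I}$, Assumption \ref{assumption:nomfeas} rules out a negative nominal surplus, so membership of $i$ outside $\hat{I}$ forces $\sum_{j\in J} a_{ij}\hat{x}_j - b_i > \sum_{j\in J_i}\alpha_{ij}|\hat{x}_j| = P_i(|J_i|) \geq P_i(\Gamma_i)$ for every admissible $\Gamma_i$, whence the condition holds for all $\Gamma_i\in[0,|J_i|]$. Collecting the two cases, the surplus condition is equivalent to $\Gamma_i\in[0,\underline{\Gamma}_i]$ for $i\in\hat{I}$ and $\Gamma_i\in[0,|J_i|]$ for $i\in I\setminus\hat{I}$, i.e., to $\bGamma\in\bTheta$, which yields both directions of the lemma.

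I expect the main obstacle to be the rigorous justification of the first step --- that the protection function coincides with the value of the embedded minimization over $(\by,\bz)$, and that this value can be attained while simultaneously respecting \eqref{Ccuin1p4}. Care is needed because the lemma asks for the existence of a \emph{single} auxiliary tuple satisfying all of \eqref{Ccuin1p1}--\eqref{Ccuin1p5} at once; the cleanest route is to observe that minimizing $\Gamma_i z_i + \sum_{j\in J_i} y_{ij}$ is exactly what makes \eqref{Ccuin1p4} easiest to satisfy, so that the dual-optimal $(\by,\bz)$ together with $u_{ij} = \alpha_{ij}|\hat{x}_j|$ furnishes a feasible completion whenever the surplus condition holds, while conversely any feasible completion certifies $P_i(\Gamma_i) \leq \sum_{j\in J} a_{ij}\hat{x}_j - b_i$. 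The monotonicity and case analysis of the second step are then routine, provided the strict separation between $\hat{I}$ and $I\setminus\hat{I}$ is tracked carefully through Assumptions \ref{assumption:nomfeas} and \ref{assumption:gammabar}.
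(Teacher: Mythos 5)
Your proof is correct, and it reaches the same pivotal reduction as the paper: after fixing $u_{ij}=\alpha_{ij}\lvert\hat{x}_j\rvert$ (which both you and the paper justify as the loosest admissible choice), feasibility of the primal block for row $i$ is equivalent to the scalar condition ``protection function at $\Gamma_i$ $\le$ nominal surplus,'' and this is then converted into $\bGamma\in\bTheta$ using monotonicity of the protection function in $\Gamma_i$ together with uniqueness of the root $\underline{\Gamma}_i$ from Assumption \ref{assumption:gammabar}. The genuine difference is how that reduction is certified. The paper does it constructively: its Lemma \ref{lem:aux} exhibits the explicit minimizer $(\bu^*_i,\by^*_i,z^*_i)$ of $\sum_{j\in J_i}y_{ij}+\Gamma_i z_i$ via a case-by-case exchange argument, substitutes it into \eqref{Ccuin1p4}, and runs the forward direction as a proof by contradiction. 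You instead invoke strong LP duality between the continuous-knapsack form of the protection function and the minimization over $(\by_i,z_i)$ --- i.e., the Bertsimas--Sim linearization argument itself --- which lets you skip the explicit construction entirely and package both directions as a single equivalence. Your route is shorter and more conceptual; what the paper's route buys is the explicit tuple \eqref{Ccu_uyz}, which it reuses downstream to build feasible solutions for the full inverse models (in the proofs of Proposition \ref{pro:CcuTwoFeas}, Proposition \ref{pro:CcuFeas}, and Lemma \ref{lem:CcuFeas2}), so the construction there is not wasted effort. One presentational caution: your final sentence claims the equivalence ``yields both directions,'' but Assumption \ref{assumption:nomfeas} enters only the $i\in I\setminus\hat{I}$ half of that equivalence, while the lemma's forward direction is stated without it. Your argument in fact respects this asymmetry --- for $i\in\hat{I}$ you use only monotonicity and uniqueness, and $\Gamma_i\le\lvert J_i\rvert$ comes from \eqref{Ccuin1u} --- but it would be worth stating explicitly that the forward implication never invokes the assumption.
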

\noindent
By showing that the bounds $\bGamma \in \bTheta$ are both necessary and sufficient for primal feasibility to be satisfied, Lemma \ref{lem:CcuPfEq} allows us to eliminate the bilinear constraint \eqref{Ccuin1p4} as well as the auxiliary variables $\bu,\by,\bz$ from RLO-CCU-DG.

Analogous to RLO-IU-DG, the feasibility of $\bxh$ for the nominal problem, along with extra conditions on $\bGamma$, are necessary and sufficient conditions for feasibility of RLO-CCU-DG.
\begin{pro}
\label{pro:CcuTwoFeas}
RLO-CCU-DG is feasible if and only if Assumption \ref{assumption:nomfeas} holds and $\bTheta \cap \bOmega \ne \varnothing$.
\end{pro}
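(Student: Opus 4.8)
The plan is to prove both implications of the equivalence, leaning on Lemma~\ref{lem:CcuPfEq} to handle the primal-feasibility block of RLO-CCU-DG and on the observation (familiar from the analysis of NLO-DG) that, once $\bGamma$ is fixed, the normalization and dual-feasibility constraints can be met by a trivial choice of multipliers. The central structural point is that every bilinearity in RLO-CCU-DG carries the factor $\bGamma$, so fixing a feasible $\bGamma$ decouples the remaining primal and dual blocks.

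For the forward direction, I would start from an arbitrary feasible point of RLO-CCU-DG. Constraint~\eqref{Ccuin1p4}, combined with the nonnegativity of $\by$, $\bz$, and $\bGamma$ coming from~\eqref{Ccuin1p5} and~\eqref{Ccuin1u}, forces $\sum_{j\in J} a_{ij}\hat{x}_j \ge b_i$ for every $i\in I$, which is precisely Assumption~\ref{assumption:nomfeas}. At the same time, the point satisfies constraints~\eqref{Ccuin1p1}--\eqref{Ccuin1u}, so the first part of Lemma~\ref{lem:CcuPfEq} gives $\bGamma\in\bTheta$; since~\eqref{Ccuin2side} also gives $\bGamma\in\bOmega$, we conclude $\bTheta\cap\bOmega\ne\varnothing$.

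For the converse, I would assume that Assumption~\ref{assumption:nomfeas} holds and that $\bTheta\cap\bOmega\ne\varnothing$, and pick any $\bGamma\in\bTheta\cap\bOmega$. The converse part of Lemma~\ref{lem:CcuPfEq}, which invokes Assumption~\ref{assumption:nomfeas}, then supplies $(\bu,\by,\bz)$ so that $(\bGamma,\bu,\by,\bz)$ satisfies the entire primal block~\eqref{Ccuin1p1}--\eqref{Ccuin1u} as well as the side constraint~\eqref{Ccuin2side}. It remains to produce dual multipliers and a cost vector: I would set $\bpi=\beee_1$ to meet the normalization~\eqref{Ccuin1norm}, set $\bvarphi=\blambda=\bmu=\bzero$, and define $\bc$ through~\eqref{Ccuin1d1}. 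With this choice, \eqref{Ccuin1d2}, \eqref{Ccuin1d3}, and~\eqref{Ccuin1d5} hold trivially, and the bilinear dual constraint~\eqref{Ccuin1d4} reduces to $0\le\Gamma_i\pi_i$, which holds because $\bGamma\ge\bzero$ and $\bpi\ge\bzero$. This assembles a complete feasible point, establishing feasibility of RLO-CCU-DG.

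The substantive content of the argument is entirely contained in Lemma~\ref{lem:CcuPfEq}; given that lemma, the remaining steps are bookkeeping. The only point that warrants care is verifying that the trivial dual assignment respects the bilinear dual constraint~\eqref{Ccuin1d4}, and I would flag explicitly that this is where the structural contrast with RLO-IU-DG matters: there the bilinear dual-feasibility constraint~\eqref{Iuin1d1} couples $\balpha$ with the multipliers, whereas here every bilinearity carries the factor $\bGamma$, which has already been fixed, so the dual block is always satisfiable by zeroing out $\bvarphi$, $\blambda$, and $\bmu$.
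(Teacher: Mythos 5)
Your proof is correct, and its skeleton matches the paper's: the forward direction (nominal feasibility from \eqref{Ccuin1p4} plus nonnegativity, then $\bGamma\in\bTheta\cap\bOmega$ from Lemma \ref{lem:CcuPfEq} and \eqref{Ccuin2side}) is essentially identical, and the converse likewise proceeds by fixing $\bGamma\in\bTheta\cap\bOmega$ and exhibiting an explicit feasible point. Where you differ is in the choice of witness for the dual block. The paper's solution \eqref{CcuFeasSol} is heavier: it uses the knapsack-type assignment \eqref{Ccu2in2optphi} for $\bvarphi$, sign-dependent $(\blambda,\bmu)$, the auxiliary solution \eqref{Ccu_uyz} for $(\bu,\by,\bz)$, and ends up with $\bc=\bab_{\hat{i}}(\Gamma_{\hat{i}},\bxh)$, the robust-adjusted constraint vector. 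You instead zero out $\bvarphi$, $\blambda$, $\bmu$ and let \eqref{Ccuin1d1} define $\bc=\ba_1$, which satisfies every dual constraint trivially --- in particular the bilinear constraint \eqref{Ccuin1d4} becomes $0\le\Gamma_i\pi_i$ --- and your structural observation that all bilinearities carry the already-fixed factor $\bGamma$ is exactly why this works. Your witness is genuinely simpler for this proposition; what the paper's construction buys is reusability: the same solution \eqref{CcuFeasSol} is invoked again in the proof of Proposition \ref{pro:CcuFeas} for RLO-CCU-SD, where the strong duality constraint must also hold and the all-zeros multiplier assignment would fail (with $\bvarphi=\blambda=\bmu=\bzero$ the duality gap equals the nominal surplus of constraint $\hat{i}$, which is generally positive). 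The paper's witness also aligns with the optimal structure derived later in Theorem \ref{thm:CcuDg}, whereas yours certifies feasibility only --- which is all this proposition requires.
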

\noindent
As discussed earlier, Assumption \ref{assumption:nomfeas} ($\sum_{j \in J} a_{ij} \hat{x}_{j} \geq b_{i}, \forall i\in I$) and $\bGamma\in\bTheta$ are both required for primal feasibility of the robust problem \eqref{Ccu} to be satisfied. Proposition \ref{pro:CcuTwoFeas} shows that feasibility of the IO problem requires that the side constraints $\bOmega$ allow $\bGamma$ to take a value in $\bTheta$.

Like the previous duality gap minimization models, a solution method for RLO-CCU-DG can be developed that involves solving $m$ convex optimization problems. As in the previous cases, the derivation of a solution method involves substituting the dual feasibility constraint with $\bc$ into the objective function, and then showing through algebraic analysis that there exists an optimal solution with binary $\bpi$. Due to the different structure of the bilinearities in RLO-CCU-DG, the intermediate algebraic analysis involves reasoning we did not use previously, but reaches the same conclusion. The interpretation of the following theorem is conceptually similar to Theorems \ref{thm:LpMip} and \ref{thm:IuDg}.
\begin{thm}
\label{thm:CcuDg}
For all $i\in I$, let $t_i$ be the optimal value and let $(\bGamma^{(i)}, \bvarphi^{(i)})$ be an optimal solution for the problem
\begin{equation}
\begin{aligned}\label{CcuMipt}
\minimize_{\bGamma, \bvarphi} \;\;\; & \sum_{j\in J} a_{ij} \hat{x}_j - \sum_{j\in J_i}\alpha_{ij}|\hat{x}_j|\varphi_{ij} - b_i \\
\mbox{\emph{subject to}} \;\;\; & \sum_{j\in J_i} \varphi_{ij} \le \Gamma_i, \\
& 0 \le \varphi_{ij} \le 1, \quad \forall j\in J_i, \\
& \bGamma \in \bTheta\cap\bOmega.
\end{aligned}
\end{equation}
Let $i^* \in \argmin_{i\in I} \{t_i\}$, and let $\bGamma^* = \bGamma^{(i^*)}$. Then the optimal value of RLO-CCU-DG is $t_{i^*}$, and an optimal solution $(\bGamma,\bc,\bpi)$ is
\begin{align}
\bGamma &= \bGamma^*, \label{CcuMipopt1} \\
\bc &= \bab_{i^*}(\Gamma^*_{i^*}, \bxh), \label{CcuMipopt2} \\
\bpi &= \beee_{i^*},
\end{align}
where, given Assumption \ref{assumption:ccu_trivial}, $\bc \ne \bzero$ and $\bab_i(\Gamma_i,\bx) \ne \bzero$ for all $i\in I, \bx\in\mathbb{R}^n$.
\end{thm}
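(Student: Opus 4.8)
The plan is to follow the template established in the proofs of Theorems~\ref{thm:LpMip} and~\ref{thm:IuDg}: eliminate $\bc$ by substituting the dual feasibility constraint \eqref{Ccuin1d1} into the objective, shrink the feasible set using the structural lemma already proved, and then show that the resulting problem admits an optimal solution with binary $\bpi$. First I would substitute \eqref{Ccuin1d1} into $\sum_{j\in J} c_j \hat x_j - \sum_{i\in I} b_i \pi_i$; collecting terms by $i$ and writing $s_i := \sum_{j\in J} a_{ij}\hat x_j - b_i$ for the nominal surplus, the objective becomes $\sum_{i\in I} \pi_i s_i + \sum_{i\in I}\sum_{j\in J_i} \alpha_{ij}\hat x_j(\lambda_{ij}-\mu_{ij})$. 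Simultaneously, Lemma~\ref{lem:CcuPfEq} lets me replace the primal feasibility block \eqref{Ccuin1p1}--\eqref{Ccuin1u} and eliminate the auxiliary variables $\bu,\by,\bz$ in favor of the single requirement $\bGamma\in\bTheta$, so that the only surviving constraints are $\bGamma\in\bTheta\cap\bOmega$, the normalization \eqref{Ccuin1norm}, and the dual constraints \eqref{Ccuin1d2}--\eqref{Ccuin1d5}.

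Next I would optimize over $(\blambda,\bmu)$. Because \eqref{Ccuin1d3} forces $\lambda_{ij}+\mu_{ij}=\varphi_{ij}$ with $\lambda_{ij},\mu_{ij}\ge 0$, the quantity $\lambda_{ij}-\mu_{ij}$ ranges over $[-\varphi_{ij},\varphi_{ij}]$, so choosing the sign that minimizes each term (and using $\alpha_{ij}\ge 0$) replaces $\alpha_{ij}\hat x_j(\lambda_{ij}-\mu_{ij})$ by $-\alpha_{ij}|\hat x_j|\varphi_{ij}$. The objective then reads $\sum_{i\in I}\pi_i s_i - \sum_{i\in I}\sum_{j\in J_i}\alpha_{ij}|\hat x_j|\varphi_{ij}$, with the remaining constraints on $\bvarphi$ being $0\le \varphi_{ij}\le \pi_i$ and $\sum_{j\in J_i}\varphi_{ij}\le\Gamma_i\pi_i$ from \eqref{Ccuin1d2}, \eqref{Ccuin1d4} and nonnegativity.

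The crux is the residual bilinearity $\sum_{j\in J_i}\varphi_{ij}\le\Gamma_i\pi_i$, which couples $\bGamma$ and $\bpi$ (unlike the bilinearity encountered in RLO-IU-DG). For fixed $\bpi$ and $\bGamma$, maximizing $\sum_{j\in J_i}\alpha_{ij}|\hat x_j|\varphi_{ij}$ subject to $0\le\varphi_{ij}\le\pi_i$ and $\sum_{j\in J_i}\varphi_{ij}\le\Gamma_i\pi_i$ is a fractional knapsack that decouples across $i$; its optimum places weight $\pi_i$ on each of the $\lfloor\Gamma_i\rfloor$ largest coefficients $\alpha_{ij}|\hat x_j|$ and weight $(\Gamma_i-\lfloor\Gamma_i\rfloor)\pi_i$ on the next, giving optimal value $\pi_i P_i(\Gamma_i)$, where $P_i(\Gamma_i)$ denotes the protection function evaluated at $\bxh$. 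Thus $\pi_i$ factors out and the objective collapses to $\sum_{i\in I}\pi_i\bigl(s_i - P_i(\Gamma_i)\bigr)$, a convex combination of the robust surpluses. Minimizing over the simplex then puts all weight on one index, $\bpi=\beee_{i^*}$ with $i^*\in\argmin_i \bigl(s_i-P_i(\Gamma_i)\bigr)$, and interchanging the minimizations over $i$ and over $\bGamma\in\bTheta\cap\bOmega$ (the feasible set for $\bGamma$ does not depend on $i$) yields $\min_{i\in I}\min_{\bGamma}\bigl(s_i - P_i(\Gamma_i)\bigr)=\min_{i\in I} t_i$; the inner problem is exactly \eqref{CcuMipt}, since its variables $\varphi_{ij}\in[0,1]$ with $\sum_{j\in J_i}\varphi_{ij}\le\Gamma_i$ encode precisely the fractional-knapsack (protection-function) solution at $\pi_{i^*}=1$.

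Finally I would reconstruct a full feasible solution attaining this value and confirm its stated form. Setting $\bGamma=\bGamma^*$ and $\bpi=\beee_{i^*}$, Lemma~\ref{lem:CcuPfEq} supplies primal auxiliaries $(\bu,\by,\bz)$; for $i\ne i^*$ the bound $\varphi_{ij}\le\pi_i=0$ forces $\varphi_{ij}=\lambda_{ij}=\mu_{ij}=0$, while for $i^*$ I set $\lambda_{i^*j}-\mu_{i^*j}=-\sgn(\hat x_j)\varphi^{(i^*)}_{i^*j}$. Substituting back into \eqref{Ccuin1d1} and comparing with \eqref{eq:abar_ccu} shows $\bc = \bab_{i^*}(\Gamma^*_{i^*},\bxh)$, the vector perpendicular to constraint $i^*$ in the orthant containing $\bxh$. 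The conclusion $\bc\ne\bzero$ and $\bab_i(\Gamma_i,\bx)\ne\bzero$ for all $i\in I,\bx\in\mathbb{R}^n$ then follows from Assumption~\ref{assumption:ccu_trivial}, exactly as in Theorem~\ref{thm:IuDg}. I expect the fractional-knapsack factorization in the third step to be the main obstacle: it is what converts the $\bGamma$--$\bpi$ bilinearity into a clean convex combination of robust surpluses and thereby recovers the binary-$\bpi$ structure, and it is the piece of reasoning that has no analogue in the earlier two proofs.
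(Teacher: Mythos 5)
Your proposal is correct and follows essentially the same route as the paper's proof: substituting \eqref{Ccuin1d1} into the objective, replacing the primal block by $\bGamma\in\bTheta$ via Lemma \ref{lem:CcuPfEq}, reducing $(\blambda,\bmu)$ to a signed variable bounded by $\bvarphi$ (the content of Lemma \ref{lem:sij}), and resolving the $\bGamma$--$\bpi$ bilinearity through the continuous (fractional) knapsack structure of the $\bvarphi$ subproblem, which collapses the objective into a convex combination of robust surpluses, yields binary $\bpi$, and gives the decomposition into the $m$ problems \eqref{CcuMipt}. The only cosmetic differences are that you reprove the interval characterization of $\lambda_{ij}-\mu_{ij}$ inline rather than citing Lemma \ref{lem:sij}, and you make the scaling $\varphi_{ij}=\pi_i w_{ij}$ explicit where the paper leaves it implicit in its knapsack step.
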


\begin{rmk}\label{rmk:CcuDg}
Theorem \ref{thm:CcuDg} shows that an optimal solution to the nonconvex inverse problem RLO-CCU-DG can be found by solving at most $m$ linear optimization problems of the form \eqref{gammabaraux}, to determine the parameters $\mathbf{\underline{\Gamma}}$ defining $\bTheta$, and $m$ convex optimization problems \eqref{CcuMipt} which become linear whenever the constraints $\bGamma \in \bOmega$ can be written linearly.
\end{rmk}

\subsubsection{Enforcing strong duality}\label{sec:CcuSd}

Next, we formulate an IO model that minimizes the deviation of $\bGamma$ from given values $\bGammah$, while enforcing strong duality, and primal and dual feasibility.
\begin{subequations}\label{Ccuin1}
\begin{align}
\textrm{RLO-CCU-SD:}\quad \minimize_{\substack{\bGamma,\bc,\bu,\by,\bz,\\\bpi,\bvarphi,\blambda,\bmu}} \;\;\; & \lVert\bGamma - \bGammah \rVert \label{Ccuin1obj} \\
\mbox{subject to} \;\;\; & \sum_{j \in J} c_{j} \hat{x}_{j} - \sum_{i\in I} b_i \pi_i = 0, \label{Ccuin1sd} \\
& \eqref{Ccuin1p1}-\eqref{Ccuin1d5}.
\end{align}
\end{subequations}
In the objective function \eqref{Ccuin1obj}, $\lVert \cdot \rVert$ is an arbitrary norm. Constraint \eqref{Ccuin1sd} represents strong duality. As in RLO-CCU-DG, all constraints are linear except for the bilinear primal feasibility constraint \eqref{Ccuin1p4} and dual feasibility constraint \eqref{Ccuin1d4}.

First, we characterize the feasibility of RLO-CCU-SD.
\begin{pro}
\label{pro:CcuFeas}
RLO-CCU-SD is feasible if and only if Assumption \ref{assumption:nomfeas} holds and $\hat{I}\ne\varnothing$.
\end{pro}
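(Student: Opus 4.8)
The plan is to prove the two directions separately, exploiting the fact that constraints \eqref{Ccuin1p1}--\eqref{Ccuin1d5} together with the strong-duality equation \eqref{Ccuin1sd} are precisely the primal-feasibility, dual-feasibility, and zero-duality-gap conditions characterizing optimality of $\bxh$ (together with auxiliary variables $\bu,\by,\bz$) for the linearized forward problem \eqref{Ccu} at the imputed $\bc$ and $\bGamma$. Since \eqref{Ccu} is an exact reformulation of the robust problem \eqref{Ccuzero}, feasibility of RLO-CCU-SD is equivalent to $\bxh$ being optimal for \eqref{Ccuzero} for some admissible $\bGamma$ and some $\bc$. I would establish sufficiency by explicit construction and necessity through complementary slackness.

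For sufficiency, assume Assumption \ref{assumption:nomfeas} holds and fix any $\hat{i}\in\hat{I}$. Because $\hat{i}\in\hat{I}$, the nominal surplus $\sum_{j\in J}a_{\hat{i}j}\hat{x}_j - b_{\hat{i}}$ lies in $[0,\sum_{j\in J_{\hat{i}}}\alpha_{\hat{i}j}|\hat{x}_j|]$, so \eqref{gammabar} admits a solution $\underline{\Gamma}_{\hat{i}}\in[0,|J_{\hat{i}}|]$, which renders constraint $\hat{i}$ active at $\bxh$. Set $\Gamma_{\hat{i}}=\underline{\Gamma}_{\hat{i}}$ and $\Gamma_i=0$ for $i\ne\hat{i}$; then $\bGamma\in\bTheta$, so Lemma \ref{lem:CcuPfEq} supplies $(\bu,\by,\bz)$ satisfying the primal-feasibility constraints \eqref{Ccuin1p1}--\eqref{Ccuin1u}. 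I would then take $\bpi=\beee_{\hat{i}}$, $\bc=\bab_{\hat{i}}(\Gamma_{\hat{i}},\bxh)$, and choose the remaining dual variables from the Bertsimas--Sim weights $w_{\hat{i}j}\in[0,1]$ of the protection function: set $\varphi_{\hat{i}j}=w_{\hat{i}j}$ (so that $\sum_{j\in J_{\hat{i}}}\varphi_{\hat{i}j}=\Gamma_{\hat{i}}$), split $\varphi_{\hat{i}j}=\lambda_{\hat{i}j}+\mu_{\hat{i}j}$ according to $\sgn(\hat{x}_j)$, and zero all dual variables indexed by $i\ne\hat{i}$. A direct check confirms \eqref{Ccuin1d1}--\eqref{Ccuin1d5}, and strong duality holds because $\bc^\intercal\bxh=\bab_{\hat{i}}(\Gamma_{\hat{i}},\bxh)^\intercal\bxh=\sum_{j\in J}a_{\hat{i}j}\hat{x}_j-\bigl(\sum_{j\in J}a_{\hat{i}j}\hat{x}_j-b_{\hat{i}}\bigr)=b_{\hat{i}}=\sum_{i\in I}b_i\pi_i$, where the middle step uses that $\Gamma_{\hat{i}}=\underline{\Gamma}_{\hat{i}}$ makes the protection function equal the nominal surplus.

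For necessity, suppose RLO-CCU-SD is feasible. Constraint \eqref{Ccuin1p4} with $\by,\bz\geq\bzero$ and $\bGamma\geq\bzero$ immediately forces $\sum_{j\in J}a_{ij}\hat{x}_j\geq b_i$ for all $i$, giving Assumption \ref{assumption:nomfeas}. To obtain $\hat{I}\ne\varnothing$, I would argue that some robust constraint must be active. Feasibility means the primal part, dual part, and the zero-gap equation hold simultaneously for the LP \eqref{Ccu} at the fixed $\bGamma$, so by LP duality complementary slackness holds. The normalization \eqref{Ccuin1norm} forces $\pi_{i_0}>0$ for some $i_0$, whence \eqref{Ccuin1p4} is tight at $i_0$. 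I then show that this tightness corresponds to robust constraint $i_0$ being active, i.e., $\sum_{j\in J}a_{i_0j}\hat{x}_j-b_{i_0}$ equals the protection-function value at $\bxh$ for budget $\Gamma_{i_0}$. Activeness places the nominal surplus of constraint $i_0$ in $[0,\sum_{j\in J_{i_0}}\alpha_{i_0j}|\hat{x}_j|]$, which is exactly the defining condition for $i_0\in\hat{I}$, so $\hat{I}\ne\varnothing$.

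The main obstacle is this last step: translating tightness of the linearized constraint \eqref{Ccuin1p4} into activeness of the robust constraint. Tightness alone yields only $\sum_{j\in J_{i_0}}\hat{y}_{i_0j}+\Gamma_{i_0}\hat{z}_{i_0}$ equal to the nominal surplus, and a priori the auxiliary variables could overshoot the protection value, leaving the robust constraint strictly slack. The resolution is to note that the auxiliary variables $\hat{y}_{i_0j},\hat{z}_{i_0}$ are feasible for the inner minimization dual to the protection function (from \eqref{Ccuin1p1}--\eqref{Ccuin1p3} one obtains $\hat{y}_{i_0j}+\hat{z}_{i_0}\geq\alpha_{i_0j}|\hat{x}_j|$), while $(\varphi_{i_0j}/\pi_{i_0})_{j\in J_{i_0}}$ is feasible for the inner maximization (from \eqref{Ccuin1d2} and \eqref{Ccuin1d4}); the remaining complementary-slackness relations among $\bu,\by,\bz$ and $\bvarphi,\blambda,\bmu$ are exactly the optimality conditions for this inner primal--dual pair, forcing $\sum_{j\in J_{i_0}}\hat{y}_{i_0j}+\Gamma_{i_0}\hat{z}_{i_0}$ down to the protection value. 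Verifying that the full complementary-slackness system collapses to inner optimality is the crux, after which activeness and hence $i_0\in\hat{I}$ follow.
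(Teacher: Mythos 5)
Your proof is correct, and two of its three pieces coincide with the paper's: the sufficiency construction (set $\Gamma_{\hat{i}}=\underline{\Gamma}_{\hat{i}}$, all other $\Gamma_i=0$, take $\bpi=\beee_{\hat{i}}$, knapsack-style weights for $\bvarphi_{\hat{i}}$, a sign split of $\varphi$ into $(\blambda,\bmu)$, and $\bc=\bab_{\hat{i}}(\Gamma_{\hat{i}},\bxh)$) is exactly the paper's feasible solution \eqref{CcuFeasSol}, and your derivation of Assumption \ref{assumption:nomfeas} from \eqref{Ccuin1p4} with nonnegativity is the paper's first case verbatim. Where you genuinely diverge is the necessity of $\hat{I}\ne\varnothing$. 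The paper argues by aggregation: assuming $\hat{I}=\varnothing$, every nominal surplus strictly exceeds $\sum_{j\in J_i}\alpha_{ij}\lvert\hat{x}_j\rvert$; multiplying by $\pi_i$, summing, and using the bounds $\lvert\lambda_{ij}-\mu_{ij}\rvert\le\varphi_{ij}\le\pi_i$ produces a strictly positive lower bound on the quantity that substituting \eqref{Ccuin1d1} into \eqref{Ccuin1sd} forces to zero --- a global contradiction that never identifies any particular constraint. You instead localize via complementary slackness: for fixed $\bGamma$ the constraints \eqref{Ccuin1d1}--\eqref{Ccuin1d5} are precisely the LP dual of \eqref{Ccu} (the bilinear constraint \eqref{Ccuin1d4} is linear once $\bGamma$ is fixed), so \eqref{Ccuin1sd} makes the pair optimal and CS applies; the normalization gives $\pi_{i_0}>0$, hence \eqref{Ccuin1p4} is tight at $i_0$, and the CS relations on $\by$, $\bz$, and \eqref{Ccuin1p1}--\eqref{Ccuin1p3} yield $\pi_{i_0}\bigl(\sum_{j\in J_{i_0}}y_{i_0j}+\Gamma_{i_0}z_{i_0}\bigr)=\sum_{j\in J_{i_0}}\varphi_{i_0j}\alpha_{i_0j}\lvert\hat{x}_j\rvert\le\pi_{i_0}\sum_{j\in J_{i_0}}\alpha_{i_0j}\lvert\hat{x}_j\rvert$, placing the surplus of $i_0$ in the interval defining $\hat{I}$. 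Note that this upper bound, using only $\varphi_{i_0j}/\pi_{i_0}\in[0,1]$, already suffices; the full inner primal--dual optimality you flag as the crux is more than you need, though it is true and is what delivers the stronger conclusion. The trade-off: the paper's chain is shorter and purely algebraic, while your argument is heavier machinery but more informative --- it shows the robust constraint $i_0$ is active (so $\Gamma_{i_0}=\underline{\Gamma}_{i_0}$ under Assumption \ref{assumption:gammabar}), essentially recovering the first half of Lemma \ref{lem:CcuFeas2} as a byproduct rather than proving it separately as the paper does.
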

\noindent
Whereas Assumption \ref{assumption:nomfeas} was both necessary and sufficient for feasibility of RLO-IU-SD, it is insufficient in the case of RLO-CCU-SD: an additional condition ($\hat{I}\ne\varnothing$) is required for strong duality to be satisfied. Therefore, the absence of side constraints on the uncertainty set parameters does not automatically mean that RLO-CCU-SD can be used rather than RLO-CCU-DG.

Whereas Lemma~\ref{lem:CcuPfEq} in the previous section showed that the primal feasibility constraints can be replaced by the bounds $\bGamma\in\bTheta$, the following lemma uses Lemma~\ref{lem:CcuPfEq} to make a stronger and more general statement that additionally addresses the strong duality and dual feasibility conditions. In particular, to satisfy strong duality, $\bGamma$ must be chosen such that $\bxh$ lies on the boundary of the robust feasible region, which corresponds to a choice of $\bGamma$ such that for at least one constraint, the protection function equals the nominal surplus.

\begin{lem}\label{lem:CcuFeas2}
Every feasible solution for RLO-CCU-SD satisfies $\bGamma \in \bTheta$ with $\Gamma_{\hat{i}} = \underline{\Gamma}_{\hat{i}}$ for a specific $\hat{i} \in \hat{I}$. Conversely, given Assumption \ref{assumption:nomfeas}, for every $\bGamma \in \bTheta$ satisfying $\Gamma_{\hat{i}} = \underline{\Gamma}_{\hat{i}}$ for a specific $\hat{i} \in \hat{I}$, there exists $(\bc,\bu,\by,\bz,\bpi,\bvarphi,\blambda,\bmu)$ such that $(\bGamma,\bc,\bu,\by,\bz,\bpi,\bvarphi,\blambda,\bmu)$ is feasible for RLO-CCU-SD.
\end{lem}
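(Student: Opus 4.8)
The plan is to prove the two directions of this characterization separately, in the spirit of Lemma~\ref{lem:LpFeas2} for NLO-SD but now accounting for the protection-function machinery. The forward direction rests on rewriting the duality gap as a quantity that is nonnegative on $\bTheta$ and vanishes only when some constraint is rendered active; the converse is a direct certificate construction leaning on Lemma~\ref{lem:CcuPfEq}. For the forward direction, I would first invoke Lemma~\ref{lem:CcuPfEq}: any feasible solution satisfies the primal constraints \eqref{Ccuin1p1}-\eqref{Ccuin1u}, so immediately $\bGamma\in\bTheta$. To locate the active index I would substitute the dual feasibility relation \eqref{Ccuin1d1} into the strong duality equation \eqref{Ccuin1sd}, writing the gap as $\sum_{i\in I}\pi_i\left(\sum_{j\in J}a_{ij}\hat{x}_j-b_i\right)+\sum_{i\in I}\sum_{j\in J_i}\alpha_{ij}\hat{x}_j(\lambda_{ij}-\mu_{ij})$. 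Using $\lambda_{ij}+\mu_{ij}=\varphi_{ij}$ and $\lambda_{ij},\mu_{ij}\ge 0$ yields the pointwise bound $\alpha_{ij}\hat{x}_j(\lambda_{ij}-\mu_{ij})\ge -\alpha_{ij}|\hat{x}_j|\varphi_{ij}$. The crux is then a fractional-knapsack estimate: for fixed $\pi_i,\Gamma_i$, over the polytope $0\le\varphi_{ij}\le\pi_i$, $\sum_{j\in J_i}\varphi_{ij}\le\Gamma_i\pi_i$ (constraints \eqref{Ccuin1d2}, \eqref{Ccuin1d4}, \eqref{Ccuin1d5}), the quantity $\sum_{j\in J_i}\alpha_{ij}|\hat{x}_j|\varphi_{ij}$ is at most $\pi_i$ times the protection function $P_i(\Gamma_i,\bxh)$ evaluated at $\Gamma_i$, by the standard Bertsimas--Sim equivalence between the subset maximum in \eqref{Ccu1Constr} and its LP relaxation.

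Combining these bounds, the gap is at least $\sum_{i\in I}\pi_i\bigl(s_i-P_i(\Gamma_i,\bxh)\bigr)$, where $s_i:=\sum_{j\in J}a_{ij}\hat{x}_j-b_i$ is the nominal surplus. Since $\bGamma\in\bTheta$ guarantees $P_i(\Gamma_i,\bxh)\le s_i$ for every $i$ (the defining property of $\bTheta$ via Lemma~\ref{lem:CcuPfEq}), each summand is nonnegative, so the gap is nonnegative. The strong duality constraint \eqref{Ccuin1sd} forces it to zero, hence $\pi_i\bigl(s_i-P_i(\Gamma_i,\bxh)\bigr)=0$ for all $i$, and the normalization \eqref{Ccuin1norm} guarantees some $\hat{i}$ with $\pi_{\hat{i}}>0$, for which $s_{\hat{i}}=P_{\hat{i}}(\Gamma_{\hat{i}},\bxh)$, i.e.\ constraint $\hat{i}$ is active. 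Because $P_{\hat{i}}(\cdot,\bxh)$ never exceeds $\sum_{j\in J_{\hat{i}}}\alpha_{\hat{i}j}|\hat{x}_j|$ (its value at $\Gamma_{\hat{i}}=|J_{\hat{i}}|$), activeness forces $s_{\hat{i}}\le\sum_{j\in J_{\hat{i}}}\alpha_{\hat{i}j}|\hat{x}_j|$, which is exactly the defining inequality of $\hat{I}$, so $\hat{i}\in\hat{I}$. Finally, $s_{\hat{i}}=P_{\hat{i}}(\Gamma_{\hat{i}},\bxh)$ means $\Gamma_{\hat{i}}$ solves \eqref{gammabar}, and Assumption~\ref{assumption:gammabar} pins this down to $\Gamma_{\hat{i}}=\underline{\Gamma}_{\hat{i}}$, completing the direction.

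For the converse, given $\bGamma\in\bTheta$ with $\Gamma_{\hat{i}}=\underline{\Gamma}_{\hat{i}}$ for a specific $\hat{i}\in\hat{I}$, I would build a certificate as follows. Under Assumption~\ref{assumption:nomfeas}, Lemma~\ref{lem:CcuPfEq} supplies $(\bu,\by,\bz)$ satisfying the primal constraints. Set $\bpi=\beee_{\hat{i}}$, which satisfies \eqref{Ccuin1norm}; then \eqref{Ccuin1d2} and \eqref{Ccuin1d5} force $\varphi_{ij}=0$, hence $\lambda_{ij}=\mu_{ij}=0$, for every $i\ne\hat{i}$. For $i=\hat{i}$ I would set $\varphi_{\hat{i}j}$ equal to the knapsack weights realizing the protection function at $\Gamma_{\hat{i}}$ (weight one on the $\lfloor\Gamma_{\hat{i}}\rfloor$ largest terms, the fractional remainder on the next, zero elsewhere), so that $\sum_{j\in J_{\hat{i}}}\varphi_{\hat{i}j}=\Gamma_{\hat{i}}=\Gamma_{\hat{i}}\pi_{\hat{i}}$ and $\varphi_{\hat{i}j}\le 1=\pi_{\hat{i}}$, satisfying \eqref{Ccuin1d2} and \eqref{Ccuin1d4}. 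Splitting each $\varphi_{\hat{i}j}$ into $(\lambda_{\hat{i}j},\mu_{\hat{i}j})$ according to $\sgn(\hat{x}_j)$ via \eqref{Ccuin1d3} makes the cost vector induced by \eqref{Ccuin1d1} equal to $\bab_{\hat{i}}(\Gamma_{\hat{i}},\bxh)$ from \eqref{eq:abar_ccu}. Then $\sum_{j\in J}c_j\hat{x}_j=\sum_{j\in J}a_{\hat{i}j}\hat{x}_j-P_{\hat{i}}(\Gamma_{\hat{i}},\bxh)=b_{\hat{i}}$ because $\Gamma_{\hat{i}}=\underline{\Gamma}_{\hat{i}}$, so strong duality \eqref{Ccuin1sd} holds with $\bpi=\beee_{\hat{i}}$ and the tuple is feasible.

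The main obstacle will be the fractional-knapsack identification used in both directions: establishing that $P_i(\Gamma_i,\bxh)$ is simultaneously the tight upper bound on $\sum_{j\in J_i}\alpha_{ij}|\hat{x}_j|\varphi_{ij}$ over the feasible $\bvarphi$ (needed for the lower bound on the gap) and attainable by an explicit $\bvarphi$ consistent with the sign split that produces $\bab_{\hat{i}}$ (needed for the certificate). Some care is also required at coordinates with $\hat{x}_j=0$ or indices with $\pi_i=0$, where these bounds degenerate; such terms are handled by observing that they contribute nothing to either the gap or the cost vector, so any admissible split suffices.
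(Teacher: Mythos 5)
Your proposal is correct and follows essentially the same route as the paper's proof: Lemma \ref{lem:CcuPfEq} gives $\bGamma \in \bTheta$, substituting \eqref{Ccuin1d1} into \eqref{Ccuin1sd} and bounding $\sum_{j\in J_i}\alpha_{ij}|\hat{x}_j|\varphi_{ij}$ by the continuous-knapsack value $\pi_i$ times the protection function locates the active index, and the converse certificate you build is exactly the one the paper takes from the proof of Proposition \ref{pro:CcuFeas}. The only differences are presentational: you close the forward direction with a direct complementarity argument ($\pi_i(s_i - P_i(\Gamma_i,\bxh)) = 0$ termwise) where the paper argues by contradiction assuming $\Gamma_i < \underline{\Gamma}_i$ for all $i\in\hat{I}$, and you verify the certificate explicitly rather than citing it.
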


Lemma \ref{lem:CcuFeas2} will allow us to easily circumvent the bilinearity in constraint \eqref{Ccuin1d4} and characterize an optimal solution to RLO-CCU-SD. We first make the following assumption on $\bGammah$.
\begin{assumption}\label{assumption:gammabounds}
$\hat{\Gamma}_i \in [0, \lvert J_i\rvert]$ for all $i\in I$.
\end{assumption}
\noindent
This assumption is without loss of generality because any $\hat\Gamma_i$ outside the interval can be moved to the closest end point of the interval without changing the solution to RLO-CCU-SD. We now describe an efficient solution method for RLO-CCU-SD, similar to Theorem \ref{thm:LpAlg}:

\begin{thm}\label{thm:CcuAlg}
Let
\begin{align}
f_i & = \label{Ccu2act}
	\begin{cases}
		\underline{\Gamma}_i - \hat{\Gamma}_i & \text{if } i\in \hat{I}, \\
		0 & \text{if } i\in I\setminus\hat{I},
	\end{cases} \\
g_i & = \min \left\{ f_i, 0 \right\}, \quad \forall i\in I, \label{Ccu2feas} \\
c^i_j & = \bar{a}_{ij}(\Gamma_i, \bxh), \quad \forall j\in J, i\in \hat{I}, \label{Ccu2cmin} \\
i^* & \in \argmin_{i\in \hat{I}} \{ \lVert\bg + (f_i - g_i)\beee_i\rVert \}. \label{Ccu2isol}
\end{align}
Given Assumption \ref{assumption:gammabounds}, the optimal value of RLO-CCU-SD is $\lVert\bg + (f_{i^*} - g_{i^*})\beee_{i^*}\rVert$, and there exists an optimal solution with
\begin{align}
& \Gamma_i =
  \begin{cases}
   \underline{\Gamma}_i & \text{if } i=i^*, \\
   \min\{\hat{\Gamma}_i, \underline{\Gamma}_i\} & \text{if } i\in \hat{I}\setminus\{i^*\}, \\
	 \hat{\Gamma}_i & \text{if } i\in I\setminus\hat{I},
  \end{cases} \label{CcuTwogsol} \\
& \bc = \bc^{i^*}, \label{CcuTwocsol}
\end{align}
where, given Assumption \ref{assumption:ccu_trivial}, $\bc \ne \bzero$ and $\bab_i(\Gamma_i, \bx) \ne \bzero$ for all $i\in I, \bx\in\mathbb{R}^n$.
\end{thm}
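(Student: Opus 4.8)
The plan is to invoke Lemma~\ref{lem:CcuFeas2} to strip away the primal, dual, and strong-duality constraints and reduce RLO-CCU-SD to a pure optimization over $\bGamma$. The lemma tells us that the feasible $\bGamma$ are exactly those in $\bTheta$ for which $\Gamma_{\hat{i}}=\underline{\Gamma}_{\hat{i}}$ for at least one $\hat{i}\in\hat{I}$ (feasibility of the whole model being guaranteed by Proposition~\ref{pro:CcuFeas}). I would therefore decompose the problem by conditioning on which constraint $\hat{i}\in\hat{I}$ is made active, solving for each fixed $\hat{i}$ the subproblem
\begin{equation*}
\min_{\bGamma}\left\{\lVert\bGamma-\bGammah\rVert \colon \bGamma\in\bTheta,\ \Gamma_{\hat{i}}=\underline{\Gamma}_{\hat{i}}\right\},
\end{equation*}
and then taking the minimum over the $\lvert\hat{I}\rvert$ resulting values. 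This mirrors the index-by-index strategy of Theorem~\ref{thm:LpAlg}, the essential difference being that the objective here is a single, \emph{non-separable} norm of the whole vector $\bGamma-\bGammah$ rather than a weighted sum of per-constraint norms.

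For a fixed $\hat{i}$, I would analyze the subproblem through the shifted variable $\bd=\bGamma-\bGammah$. Its feasible region is a box: the coordinate $\hat{i}$ is pinned at $d_{\hat{i}}=\underline{\Gamma}_{\hat{i}}-\hat{\Gamma}_{\hat{i}}=f_{\hat{i}}$, while each remaining coordinate $d_i$ ranges over an interval determined by $\bTheta$ and Assumption~\ref{assumption:gammabounds}. The key observation is that each such interval contains its closest-to-zero point, and that this point is exactly the $g_i$ of \eqref{Ccu2feas}: for $i\in\hat{I}\setminus\{\hat{i}\}$ the interval is $[-\hat{\Gamma}_i,f_i]$, whose nearest point to $0$ is $\min\{f_i,0\}=g_i$, and for $i\in I\setminus\hat{I}$ the interval contains $0=g_i$. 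I would then argue that $d_i=g_i$ for all $i\ne\hat{i}$ is optimal, so the optimal shifted vector is $\bg+(f_{\hat{i}}-g_{\hat{i}})\beee_{\hat{i}}$ with value $\lVert\bg+(f_{\hat{i}}-g_{\hat{i}})\beee_{\hat{i}}\rVert$. Minimizing over $\hat{i}\in\hat{I}$ yields the index $i^*$ of \eqref{Ccu2isol}, the stated optimal value, and the $\bGamma$ of \eqref{CcuTwogsol}, read off via $\Gamma_i=\hat{\Gamma}_i+g_i=\min\{\hat{\Gamma}_i,\underline{\Gamma}_i\}$ for $i\in\hat{I}\setminus\{i^*\}$, $\Gamma_i=\hat{\Gamma}_i$ for $i\in I\setminus\hat{I}$, and $\Gamma_{i^*}=\underline{\Gamma}_{i^*}$.

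To finish, I would use the sufficiency direction of Lemma~\ref{lem:CcuFeas2} to recover compatible $(\bc,\bu,\by,\bz,\bpi,\bvarphi,\blambda,\bmu)$: since $\Gamma_{i^*}=\underline{\Gamma}_{i^*}$ renders constraint $i^*$ active at $\bxh$, taking $\bpi=\beee_{i^*}$ together with the cost vector perpendicular to the active constraint in the orthant of $\bxh$, namely $\bc=\bc^{i^*}=\bab_{i^*}(\Gamma_{i^*},\bxh)$ as in \eqref{Ccu2cmin} and \eqref{CcuTwocsol}, satisfies strong duality and dual feasibility. Non-triviality then follows from Assumption~\ref{assumption:ccu_trivial}: in the realization \eqref{eq:abar_ccu}, either a certain coefficient $a_{ij}\ne0$ with $j\in J\setminus J_i$ survives untouched, or some $\lvert a_{ij}\rvert>\alpha_{ij}$ forces $\lvert a_{ij}-\sgn(x_j)\alpha_{ij}\rvert>0$, so $\bab_i(\Gamma_i,\bx)\ne\bzero$ in every orthant and $\bc\ne\bzero$.

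The main obstacle is the optimality claim in the second paragraph: because $\lVert\cdot\rVert$ acts on the entire vector, minimizing it over the box does not a priori separate across coordinates. The clean resolution is a domination argument: the proposed point satisfies $\lvert d_i\rvert\le\lvert d_i'\rvert$ coordinate-wise against any feasible $\bd'$ (the active coordinate is fixed and common, and every other coordinate is set to the in-interval point of least absolute value), whence monotonicity of the norm gives $\lVert\bd\rVert\le\lVert\bd'\rVert$. Establishing and correctly deploying this monotonicity/domination step — rather than the routine interval projections it rests on — is where the real care lies, and it is the feature that distinguishes this argument from the separable one in Theorem~\ref{thm:LpAlg}.
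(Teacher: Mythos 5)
Your proposal is correct and follows essentially the same route as the paper's proof: invoke Lemma~\ref{lem:CcuFeas2} to reduce RLO-CCU-SD to $\lvert\hat{I}\rvert$ box-constrained norm-minimization subproblems indexed by the active constraint, solve each by setting every free coordinate of $\bGamma-\bGammah$ to its nearest-to-zero feasible value $g_i$ while the pinned coordinate equals $f_{\hat{i}}$, take the minimum over $\hat{i}\in\hat{I}$, and finish with the sufficiency direction of the lemma plus Assumption~\ref{assumption:ccu_trivial} for non-triviality. The only difference is one of explicitness: the paper optimizes each coordinate independently without comment, whereas you flag that this step rests on coordinate-wise domination together with monotonicity of the norm --- a point the paper leaves implicit (and which, strictly speaking, both arguments need, since an arbitrary norm is not automatically monotone).
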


\begin{rmk}
Theorem \ref{thm:CcuAlg} shows that an optimal solution to the nonconvex inverse problem RLO-CCU-SD can be found by solving at most $m$ linear optimization problems of the form \eqref{gammabaraux} to determine $\underline{\Gamma}_i$ for all $i\in I$.
\end{rmk}

While the interpretation of Theorem \ref{thm:CcuAlg} is conceptually similar to the interpretation of Theorem \ref{thm:LpAlg}, there are some differences. First, $f_i$ and $g_i$ here correspond to the values of the $i$-th component of the vector inside the norm in the objective function of the IO problem, whereas in the former result they correspond to the values of the $i$-th term in the objective function. Second, $f_i$ and $g_i$ here are functions of $\underline{\Gamma}_i$, which is the optimal value of a linear optimization problem, whereas $f_i$ and $g_i$ in Theorem \ref{thm:LpAlg} are the values of closed form expressions. Although the solution methods of Theorems \ref{thm:CcuAlg} and \ref{thm:IuAlg} appear somewhat different, they are also conceptually similar insofar as both set a single constraint of the forward problem to be active, and set the cost vector perpendicular to the part of that constraint which is in the same orthant as the observed solution.

\subsection{Numerical examples}\label{sec:CcuEx}

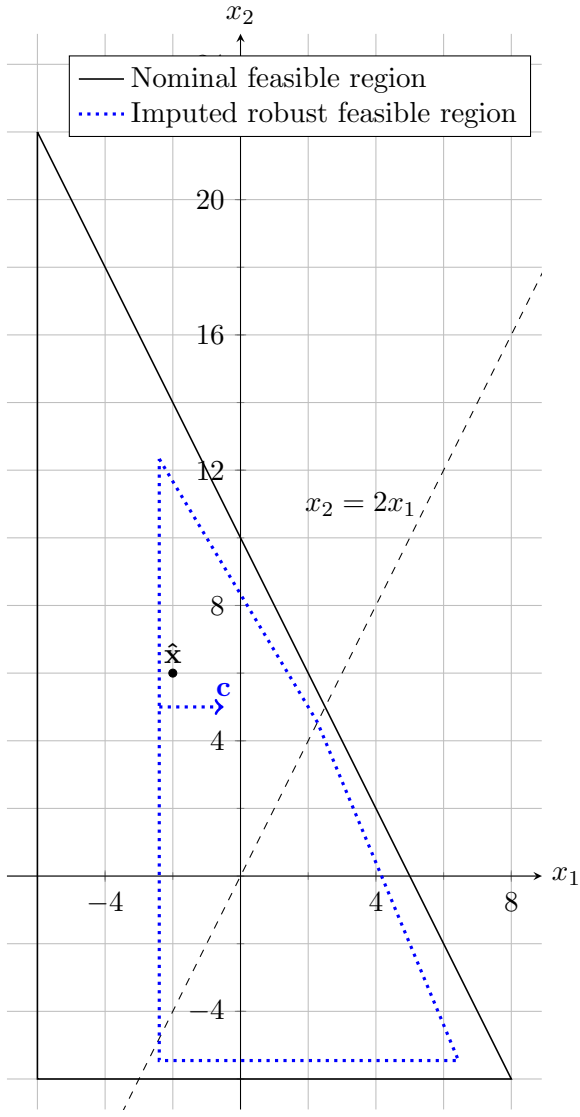
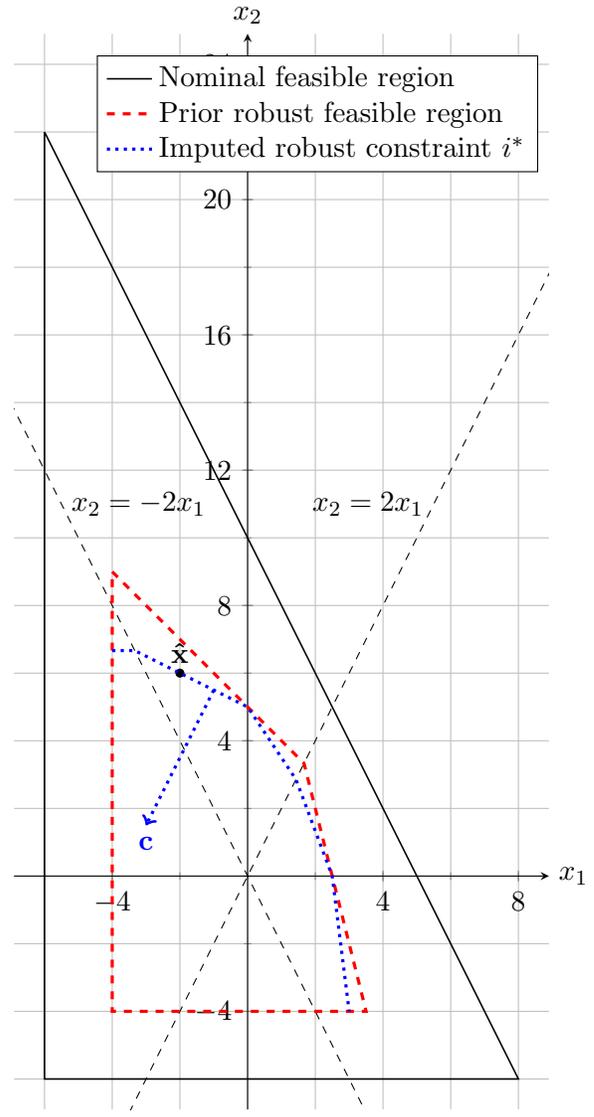
\begin{figure*}
\centering
\begin{subfigure}{0.5\textwidth}
\centering
\begin{tikzpicture}
		\begin{axis}[
			axis x line=center,
			axis y line=center,
			x=0.45cm, y=0.45cm, 
			xlabel={$x_1$},
			ylabel={$x_2$},
			xlabel style={right},
			ylabel style={above},
			xtick={-4,4,8},
			ytick={-4,4,8,12,16,20,24},
			minor xtick={-6,-4,...,8}, 
			minor ytick={-6,-4,...,24}, 
			grid=minor,
			legend cell align={left}, 
			xmin=-6.9,
			xmax=8.9,
			ymin=-6.9,
			ymax=24.9]
				
		\draw[dashed] (-3.5,-7) -- (5.5,11) node[left] {$x_2 = 2x_1$} -- (9, 18); 
		\filldraw (-2,6) circle (1.5pt) node[above] {$\bxh$}; 
		\draw[semithick] (-6,22) -- (8,-6) -- (-6,-6) -- (-6,22); 
		\draw[very thick,blue,dotted] (-2.4,37/3) -- (0,50/6) -- (25/11,50/11) -- (25/6,0) -- (425/66,-60/11) -- (-2.4,-60/11) -- (-2.4,37/3); 
		\draw[->, very thick, blue,dotted] (-2.4,5) -- (-0.5,5) node[above] {$\bc$}; 
		
		\addlegendimage{semithick}
		\addlegendentry{Nominal feasible region}
		\addlegendimage{very thick,blue,dotted}
		\addlegendentry{Imputed robust feasible region}
		
		\end{axis}
\end{tikzpicture}
\caption{Example \ref{ex:CcuDg}: RLO-CCU-DG.}
\label{fig:CcuDg}
\end{subfigure}~~~~ 
\begin{subfigure}{0.5\textwidth}
\centering
\begin{tikzpicture}
		\begin{axis}[
			axis x line=center,
			axis y line=center,
			x=0.45cm, y=0.45cm, 
			xlabel={$x_1$},
			ylabel={$x_2$},
			xlabel style={right},
			ylabel style={above},
			xtick={-4,4,8},
			ytick={-4,4,8,12,16,20,24},
			minor xtick={-6,-4,...,8}, 
			minor ytick={-6,-4,...,24}, 
			grid=minor,
			legend cell align={left}, 
			xmin=-6.9,
			xmax=8.9,
			ymin=-6.9,
			ymax=24.9]
				
		\draw[dashed] (-3.5,-7) -- (5.5,11) node[left] {$x_2 = 2x_1$} -- (9, 18); 
		\draw[dashed] (-7,14) -- (-5.5,11) node[right] {$x_2 = -2x_1$} -- (3.5,-7); 
		\filldraw (-2,6) circle (1.5pt) node[above] {$\bxh$}; 
		\draw[semithick] (-6,22) -- (8,-6) -- (-6,-6) -- (-6,22); 
		\draw[very thick,red,dashed] (-4,9) -- (5/3,10/3) -- (7/2,-4) -- (-4,-4) -- (-4,9); 
		\draw[very thick,blue,dotted] (-4,20/3) -- (-10/3,20/3) -- (0,5) -- (10/7,20/7) -- (5/2,0) -- (3,-4); 
		\draw[->, very thick, blue,dotted] (-1,5.5) -- (-3,1.5) node[below] {$\bc$}; 
		
		\addlegendimage{semithick}
		\addlegendentry{Nominal feasible region}
		\addlegendimage{very thick,red,dashed}
		\addlegendentry{Prior robust feasible region}
		\addlegendimage{very thick,blue,dotted}
		\addlegendentry{Imputed robust constraint $i^*$}
		
		\end{axis}
\end{tikzpicture}
\caption{Example \ref{ex:CcuSd}: RLO-CCU-SD.}
\label{fig:CcuSd}
\end{subfigure}
\caption{Numerical examples for the cardinality constrained uncertainty IO models.}
\label{fig:CcuEx}
\end{figure*}

In this section, we provide examples illustrating the solutions of RLO-CCU-DG and RLO-CCU-SD.

\begin{exmp}[\textbf{RLO-CCU-DG}]\label{ex:CcuDg}
Let the observed solution, nominal problem, and index sets be the same as in Example \ref{ex:IuDg} (RLO-IU-DG). We assume fixed parameters $\alpha_{11} = 2.5, \alpha_{22} = 0.5, \balpha_{3} = (2,1)$. We will require that the imputed $\bGamma$ be in the set
\begin{align*}
\bOmega = \left\{\bGamma \colon \Gamma_i \geq 0.2, \forall i\in I; \sum_{i\in I}\Gamma_i \le 1 \right\}.
\end{align*}
Similar to the RLO-IU-DG case, we can verify the feasibility of the IO problem by finding uncertainty set parameters that meet the condition in Proposition \ref{pro:CcuTwoFeas} (for brevity, we omit this step).

To determine the optimal solution of RLO-CCU-DG, we apply Theorem \ref{thm:CcuDg} and find $\bt = (1, 10.2, 4.4)$, so $i^* = 1$ with corresponding $\bGamma^* = (0.6, 0.2, 0.2)$ and $\bc = (2.5,0)$. In other words, the minimum possible surplus for any of the three constraints is obtained by maximizing the degree of uncertainty associated with the first constraint. The duality gap equals the surplus of the first constraint, and the cost vector is perpendicular to the first constraint. The nominal and imputed robust feasible regions are shown in Figure~\ref{fig:CcuDg}. In particular, the robust counterpart of the third constraint is equivalent to
\begin{align*}
& -2 x_1 - x_2 - \max\left\{ 0.4|x_{1}|, 0.2|x_{2}| \right\} \geq -10.
\end{align*}
Thus, the realization of this constraint depends not only on the sign of $\bx$ (as the RLO-IU-SD example), but also the position of $\bx$ relative to the lines $x_2 = 2x_1$ and $x_2 = -2x_1$. Figure~\ref{fig:CcuDg} depicts the first of these two lines; at the point that it intersects the boundary of the constraint, the constraint changes slope.
\end{exmp}

\begin{exmp}[\textbf{RLO-CCU-SD}]\label{ex:CcuSd}
Let the observed solution, nominal problem, and index sets be the same as in Example \ref{ex:IuDg} (RLO-IU-DG), and let $\alpha_{11} = 2.5, \alpha_{22} = 0.5$, and $\balpha_{3} = (2,1)$ as in Example \ref{ex:CcuDg}. We assume a prior $\bGammah = (0.2,1,1)$. We use the $L_1$ norm for the objective function. The nominal and robust feasible regions are shown in Figure~\ref{fig:CcuSd}. In particular, the robust counterpart of the third constraint is equivalent to
\begin{align*}
& -2 x_1 - x_2 - \max\left\{ 2|x_{1}|, |x_{2}| \right\} \geq -10.
\end{align*}

First, we verify that constraints 1 and 3 (but not 2) have nominal surplus less than the maximum value of the corresponding protection function, and therefore $\hat{I} = \{1,3\}$. Solving formulation \eqref{gammabaraux} results in $\underline{\Gamma}_1 = 0.8, \underline{\Gamma}_3 = 1.5$. Next, we determine that $g_i = 0$ for all $i\in \hat{I}$, meaning that $\bxh$ is feasible with respect to the prior $\hat\bGamma$ and the problem reduces to finding $i^*\in \argmin_{i\in \hat{I}} |f_i|$. Finally, we find the unique solution $i^* = 3$ and $f_3 = 0.5$. Letting $\Gamma_{3} = \underline{\Gamma}_3 = 1.5$, the robust counterpart of the third constraint becomes
\begin{align*}
& -2 x_1 - x_2 - \max\left\{ 2|x_1| + 0.5|x_2|, |x_1| + |x_2| \right\} \geq -10,
\end{align*}
which is piecewise linear (breakpoints defined by the two coordinate axes and the two equations $x_2 = 2x_1$ and $x_2 = -2x_1$). The observed $\bxh$ is on the part of this constraint in the region defined by $x_1 < 0, x_2 \geq 0, x_2 \geq -2x_1$, and the imputed cost vector $\bc = \bc^3 = (-1,-2)$ is perpendicular to the constraint at $\bxh$. Incidentally, the constraint which is set active differs from the constraint with the minimum surplus in Example \ref{ex:CcuDg}.
\end{exmp}

\section{Conclusion}\label{sec:con}

This paper demonstrates that inverse optimization to recover parameters defining the feasible set of linear and robust linear optimization problems is tractable. In particular, finding a solution to one of our models requires solving at most a linear number of convex problems, which under mild conditions can be reduced to linear problems or even closed form solutions. Our general approach to imputing left-hand-side or uncertainty set parameters leverages the intuitive geometry associated with inverse linear optimization. Despite differences in the types of parameters being imputed, the key steps in model construction and solution method are common to all forward problems. Future work could consider generalizing our IO models to include side constraints on the cost vector, since a limitation of our models is that they may imply a cost vector which is unreasonable in the context of the application domain. Another extension could allow multiple (noisy) data points, which would require reformulating our models to allow the observed solutions to be infeasible for the forward problem. Finally, we could also consider nonlinear forward problems, which would require reformulating our models using the corresponding optimality conditions. For each of these extensions, the tractability of the resulting IO models is unknown.

\section*{Acknowledgments}

This research was partially supported by the Natural Sciences and Engineering Research Council of Canada, and the Ontario Ministry of Training, Colleges and Universities.

\bibliography{references}

\newpage
\appendix
\renewcommand{\thefigure}{\arabic{figure}}
\Large \noindent \textbf{Supplementary material} \\
\normalsize

This supplementary material contains two appendices to the main body of the article. Appendix A contains proofs and lemmas, and Appendix B contains numerical examples of how to circumvent trivial solutions to NLO-SD that may be induced when Assumption 2 is violated.

\section{Proofs and supplementary lemmas}\label{sec:proofs}

\begin{proof}[Proof of Lemma \ref{lem:LpFeas2}]
To prove the first statement, we assume $\sum_{j\in J}a_{ij} \hat{x}_j > b_i$ for all $i\in I$ and derive a contradiction. Substituting \eqref{Lpin1d} into \eqref{Lpin1sd}, we get
\begin{equation}\label{nom:temp1}
\sum_{i\in I} \pi_i \sum_{j \in J}a_{ij} \hat{x}_{j} = \sum_{i\in I} \pi_i b_i.
\end{equation}
Constraint \eqref{Lpin1norm} ensures that $\overline{I} := \{i\in I \colon \pi_i > 0\} \ne \varnothing$. Since we have assumed $\sum_{j \in J} a_{ij} \hat{x}_{j} > b_{i}$ for all $i\in I$, we have
\begin{align*}
\pi_{i}\sum_{j \in J} a_{ij} \hat{x}_j > \pi_{i}b_{i}, \quad \forall i \in \overline{I},
\end{align*}
and since $\pi_i \geq 0$ for all $i\in I$,
\begin{align*}
\sum_{i\in I}\pi_{i}\sum_{j \in J} a_{ij} \hat{x}_j > \sum_{i\in I}\pi_{i}b_{i},
\end{align*}
which contradicts equation \eqref{nom:temp1}.

To prove the second statement, let $\bA$ satisfy \eqref{Lpin1abceq}, $\hat{i}$ be defined by \eqref{thm:Lp2algc1}, $\bc = \ba_{\hat{i}}$ and $\bpi = \beee_{\hat{i}}$. This solution is feasible for NLO-SD.
\end{proof}

\begin{proof}[Proof of Proposition \ref{pro:IuTwoFeas}] $(\Rightarrow)$ Assume that every nonnegative $\balpha\in \bOmega$ has some $\hat{i}\in I$ such that $\sum_{j \in J} a_{\hat{i}j} \hat{x}_{j} - \sum_{j \in J_{\hat{i}}} \alpha_{\hat{i}j} \lvert\hat{x}_{j}\rvert < b_{\hat{i}}$. Constraints \eqref{Iuin1p1}-\eqref{Iuin1p2} and \eqref{Iuin1u} imply $u_{\hat{i}j} \geq \alpha_{\hat{i}j}\hat{x}_j$ for all $j\in J_{\hat{i}}$. It follows that $\sum_{j \in J} a_{\hat{i}j} \hat{x}_{j} - \sum_{j\in J_{\hat{i}}}u_{\hat{i}j} < b_{\hat{i}}$, meaning that the constraint \eqref{Iuin1p3} is violated for $\hat{i}$.

$(\Leftarrow)$ Let $\hat{i}\in I$ be an arbitrary index, and let $\balpha$ take any nonnegative value satisfying the condition stated in the proposition. Then, it is easy to check that the following is a feasible solution for RLO-IU-DG:
\begin{equation}\label{IuFeasSol}
\begin{aligned}
\bpi &= \beee_{\hat{i}}, \\
(\lambda_{ij},\mu_{ij}) &= \left\{
\begin{array}{ll}
(1,0) & \text{if } \hat{x}_j \le 0, i=\hat{i}, \\
(0,1) & \text{if } \hat{x}_j > 0, i=\hat{i}, \\
(0,0) & \text{otherwise},
\end{array}
\right. \quad \forall j\in J_i, i\in I, \\
u_{ij} &= \alpha_{ij}\lvert\hat{x}_j\rvert, \quad \forall j\in J_i, i\in I, \\
c_j &= \bar{a}_{\hat{i}j}(\balpha_{\hat{i}}, \bxh), \quad \forall j\in J.
\end{aligned}
\end{equation}
\end{proof}

\begin{proof}[Proof of Theorem \ref{thm:IuDg}] We note first that constraints \eqref{Iuin1p1}-\eqref{Iuin1p3} can be replaced by \eqref{Iuin1eq3}, since \eqref{Iuin1p1}-\eqref{Iuin1u} imply \eqref{Iuin1eq3}, and for any $\balpha$ satisfying \eqref{Iuin1eq3} and \eqref{Iuin1u}, \eqref{Iuin1p1}-\eqref{Iuin1p3} will be satisfied by $(\balpha,\bu)$ where $u_{ij} = \alpha_{ij}|\hat{x}_j|$ for all $j\in J_i, i\in I$ (this reasoning is similar to the proof of Lemma \ref{lem:IuFeas2} below).

We eliminate $\bc$ by substituting the dual feasibility constraint \eqref{Iuin1d1} into the objective function \eqref{Iuin2dualitygap}. The resulting model has an objective function that is bilinear in variables whose corresponding feasible sets $P = \{\balpha \colon \eqref{Iuin1eq3}, \balpha \in \bOmega, \balpha \geq \bzero \}$ and $D = \{(\bpi,\blambda,\bmu) \colon \eqref{Iuin1norm},\eqref{Iuin1d2}-\eqref{Iuin1d3}\}$ are disjoint:
\begin{align}\label{Iuin2a}
\minimize_{\substack{\balpha\in P, \\ (\bpi,\blambda,\bmu)\in D}} \;\;\; & \sum_{i\in I}\sum_{j \in J}a_{ij} \pi_i \hat{x}_{j} +  \sum_{i\in I} \sum_{j\in J_i}\alpha_{ij} (\lambda_{ij} - {\mu}_{ij}) \hat{x}_{j} - \sum_{i\in I} b_i\pi_i.
\end{align}
Since $D$ is a bounded polyhedron and disjoint from $P$, an optimal solution to~\eqref{Iuin2a} exists among the vertices of $D$ \citep[Proposition 3.1]{horst2000}. The constraints $\sum_{i\in I}\pi_i = 1$ and ${\pi}_i \geq 0$ for all $i\in I$ imply that a vertex of $D$ will satisfy $\pi_{\hat{i}} = 1$ for some $\hat{i}\in I$, and $\pi_i = 0$ for all $i\in I\setminus\{\hat{i}\}$. So it suffices to consider binary $\pi_i$. Let $s_{ij} = \lambda_{ij} - \mu_{ij}$ for all $j\in J_i, i\in I$. By Lemma \ref{lem:sij} (see below), constraint \eqref{Iuin1d2} and nonnegativity of $(\blambda,\bmu)$ are equivalent to $s_{ij} \in [-\pi_i,\pi_i]$ for all $j \in J_i, i \in I$. Thus, formulation~\eqref{Iuin2a} is equivalent to
\begin{align*}
\minimize_{\balpha\in P,\bpi,\bs} \;\;\; & \sum_{i\in I}\left(\sum_{j\in J}a_{ij}\hat{x}_j - b_i \right) \pi_i + \sum_{i\in I}\sum_{j\in J_i}\alpha_{ij}\hat{x}_j s_{ij} \\
\mbox{subject to} \;\;\; & -\pi_i \le s_{ij} \le \pi_i, \quad\forall j\in J_i, i\in I, \\
& \sum_{i\in I}\pi_i = 1, \\
& \pi_i \in \{0,1\}, \quad \forall i\in I.
\end{align*}

By inspection, we see that for a given $(\balpha, \bpi)$, an optimal $\bs$ satisfies $s_{ij} = -\sgn{(\hat{x}_j)}\pi_i$. This fact allows us to eliminate $\bs$:
\begin{equation}\label{Iuin2b}
\begin{aligned}
\minimize_{\balpha\in P,\bpi} \;\;\; & \sum_{i\in I}\left(\sum_{j\in J}a_{ij}\hat{x}_j - \sum_{j\in J_i}\alpha_{ij}|\hat{x}_j| - b_i \right) \pi_i \\
\mbox{subject to} \;\;\; & \sum_{i\in I}\pi_i = 1, \\
& \pi_i \in \{0,1\}, \quad \forall i\in I.
\end{aligned}
\end{equation}
For a given $\balpha\in P$, it is clear that an optimal $\bpi$ is $\beee_{i^*}$, where $i^* \in \argmin_{i\in I} \{\sum_{j\in J}a_{ij}\hat{x}_j - \sum_{j\in J_i}\alpha_{ij}|\hat{x}_j| - b_i \}$. The optimal value and solution of problem \eqref{Iuin2b}, and the associated $\bc$, can then be determined by following a similar argument as in Theorem \ref{thm:LpMip}.

By Assumption \ref{assumption:iu_trivial}, $\bab_i(\balpha_i,\bx) = \bzero$ is infeasible for problem \eqref{IuMipt} for all $i\in I, \bx\in\mathbb{R}^n$, and therefore the optimal solution \eqref{IuSdopt1}-\eqref{IuSdopt2} satisfies $\bc \ne \bzero$ and $\bab_i(\balpha_i,\bx) \ne \bzero$ for all $i\in I, \bx\in\mathbb{R}^n$.
\end{proof}

\begin{proof}[Proof of Proposition \ref{pro:IuFeas}] $(\Rightarrow)$ Assume that $\sum_{j \in J} a_{\hat{i}j} \hat{x}_{j} < b_{\hat{i}}$ for some $\hat{i}\in I$. By following a similar argument as in the proof of Proposition \ref{pro:IuTwoFeas}, it can be shown
that the constraint \eqref{Iuin1p3} is violated for $\hat{i}$.

$(\Leftarrow)$ By Assumption \ref{assumption:Iu2}, there exists $\hat{i} \in I$ and $\hat{j}\in J_{\hat{i}}$ such that $\hat x_{\hat{j}} \ne 0$. Then, it is easy to check that the following is a feasible solution for RLO-IU-SD: $(\bc,\bu,\bpi,\blambda,\bmu)$ as in \eqref{IuFeasSol}, and
\begin{align*}
\alpha_{ij} &= \left\{
\begin{array}{ll}
\frac{\sum_{k \in J} a_{ik} \hat{x}_k - b_{i}}{\lvert\hat{x}_j\rvert} & \text{if } j=\hat{j}, i=\hat{i}, \\
0 & \text{otherwise}.
\end{array}
\right.
\end{align*}
\end{proof}

\begin{proof}[Proof of Theorem \ref{thm:IuAlg}] By Lemma \ref{lem:IuFeas2} (see below), solving RLO-IU-SD is equivalent to solving the following optimization problem for all $\hat{i}\in I$, and taking the minimum over all $\lvert I\rvert$ optimal values:
\begin{align*}
\minimize_{\balpha} \;\;\; & \sum_{i\in I}\xi_i \lVert \balpha_i - \balphah_i \rVert \\
\mbox{subject to} \;\;\; & \sum_{j\in J}a_{\hat{i}j} \hat{x}_j -\sum_{j\in J_{\hat{i}}}{{\alpha}_{\hat{i}j}|\hat{x}_{j}|} = b_{\hat{i}} \\ 
& \sum_{j\in J}a_{ij} \hat{x}_j -\sum_{j\in J_i}{{\alpha}_{ij}|\hat{x}_{j}|} \geq b_i, \quad \forall i\in I, \\
& \alpha_{ij} \geq 0, \quad \forall j\in J_i, i\in I.
\end{align*}
Then the optimal value and an optimal $(\bc, \balpha)$ for RLO-IU-SD can be determined by following the same argument as in Theorem \ref{thm:LpAlg}.

Next we will show that either of the two conditions in Assumption \ref{assumption:iu_trivial} implies that $\bab_i(\balpha_i, \bx) \ne \bzero$ for all $i\in I, \bx\in\mathbb{R}^n$, from which $\bc \ne \bzero$ follows since $\bc = \mathbf{\bar{a}}_{i^*}(\balpha_{i^*}, \bxh)$. First we will prove by contrapositive that $b_i > 0$ implies the required conclusion. Supposing that $\bab_i(\balpha_i, \bx) = \bzero$ for some $i\in I, \bx\in \mathbb{R}^n$, the definition of $\bab_i(\balpha_i, \bx)$ implies both that $|a_{ij}| = \alpha_{ij}$ for all $j\in J_i$, and $a_{ij} = 0$ for all $j\in J\setminus J_i$. The former statement implies $a_{ij}\hat{x}_j \le \alpha_{ij} |\hat{x}_j|$ for all $j\in J_i$, whereas the latter implies $\sum_{j\in J\setminus J_i} a_{ij}\hat{x}_j = 0$, and since $\balpha_i$ must satisfy $\sum_{j\in J} a_{ij}\hat{x}_j - \sum_{j\in J_i} \alpha_{ij}|\hat{x}_j| \geq b_i$, we can deduce $b_i \le 0$. Second, the assumption that $a_{ij} \ne 0$ for some $j\in J\setminus J_i$ implies that $\bar{a}_{ij}(\balpha_i, \bx) \ne 0$ for any $i\in I, \balpha_i \geq \bzero, \bx\in\mathbb{R}^n$.
\end{proof}

\begin{proof}[Proof of Lemma \ref{lem:CcuPfEq}]
To prove the first statement, suppose to the contrary that $\Gamma_i \in (\underline{\Gamma}_i,|J_i|]$ for some $i\in \hat{I}$, i.e., $\bGamma \notin \bTheta$ but $\Gamma_i$ does satisfy constraint \eqref{Ccuin1u}. We will show that there is no $(\bu,\by,\bz)$ that satisfies constraints \eqref{Ccuin1p1}-\eqref{Ccuin1p5}. In particular, we will show that any $(\bu,\by,\bz)$ that satisfies \eqref{Ccuin1p1}-\eqref{Ccuin1p3} and \eqref{Ccuin1p5} will never satisfy \eqref{Ccuin1p4}.

Consider the following linear optimization problem:
\begin{equation}
\begin{aligned}\label{aux}
\minimize_{\bu_i,\by_i,z_i} \;\;\; & \sum_{j\in J_i}y_{ij} + \Gamma_i z_i \\
\mbox{subject to} \;\;\; & y_{ij} + z_i \geq u_{ij}, \quad \forall j\in J_i, \\
& -u_{ij} \le \alpha_{ij} \hat{x}_{j} \le u_{ij}, \quad j\in J_i, \\
& y_{ij}, z_i \geq 0, \quad \forall j\in J_i.
\end{aligned}
\end{equation}
By Lemma \ref{lem:aux} (see below), an optimal solution to this problem is
\begin{equation}
\begin{aligned}\label{Ccu_uyz}
& u^*_{ij}=\alpha_{ij} \lvert\hat{x}_{j}\rvert, \quad \forall j\in J_i, \\
& y^*_{ij}=\max\{u^*_{ij}-z^*_i,0\}, \quad\forall j\in J_i, \\
& z_i^*=\alpha_{ij^i_{\lceil\Gamma_i\rceil}(\bxh)} \lvert\hat{x}_{j^i_{\lceil\Gamma_i\rceil}(\bxh)}\rvert,
\end{aligned}
\end{equation}
which satisfies \eqref{Ccuin1p1}-\eqref{Ccuin1p3} and \eqref{Ccuin1p5} since the constraints of \eqref{aux} are identical to \eqref{Ccuin1p1}-\eqref{Ccuin1p3} and \eqref{Ccuin1p5}. However, $(\bu^*_i,\by^*_i,z^*_i)$ does not satisfy \eqref{Ccuin1p4}, as shown below. Because $(\bu^*_i,\by^*_i,z^*_i)$ yields the smallest possible value of $\sum_{j\in J_i}y_{ij} + \Gamma_i z_i$, there cannot be any other feasible solution for \eqref{Ccuin1p1}-\eqref{Ccuin1p3} and \eqref{Ccuin1p5} which will satisfy \eqref{Ccuin1p4}.

For completeness, we substitute $(\bu^*_i,\by^*_i,z^*_i)$ into the left-hand-side of constraint \eqref{Ccuin1p4} to show that the constraint will not be satisfied:
\begin{align}
& \sum_{j \in J} a_{ij} \hat{x}_{j} - \left(\sum_{j\in J_i}\max\left\{\alpha_{ij} \lvert\hat{x}_{j}\rvert-\alpha_{ij^i_{\lceil\Gamma_i\rceil}(\bxh)} \lvert\hat{x}_{j^i_{\lceil\Gamma_i\rceil}(\bxh)}\rvert,0\right\} + \Gamma_i \alpha_{ij^i_{\lceil\Gamma_i\rceil}(\bxh)} \lvert\hat{x}_{j^i_{\lceil\Gamma_i\rceil}(\bxh)}\rvert\right) \nonumber \\
\Leftrightarrow\;\;\; & \sum_{j \in J} a_{ij} \hat{x}_{j} - \sum_{k=1}^{\lfloor\Gamma_i\rfloor}\left(\alpha_{ij^i_k(\bxh)} \lvert\hat{x}_{j^i_k(\bxh)}\rvert-\alpha_{ij^i_{\lceil\Gamma_i\rceil}(\bxh)} \lvert\hat{x}_{j^i_{\lceil\Gamma_i\rceil}(\bxh)}\rvert\right) - \Gamma_i \alpha_{ij^i_{\lceil\Gamma_i\rceil}(\bxh)} \lvert\hat{x}_{j^i_{\lceil\Gamma_i\rceil}(\bxh)}\rvert \nonumber \\
\Leftrightarrow\;\;\; & \sum_{j \in J} a_{ij} \hat{x}_{j} - \sum_{k=1}^{\lfloor\Gamma_i\rfloor}\alpha_{ij^i_k(\bxh)} \lvert\hat{x}_{j^i_k(\bxh)}\rvert - (\Gamma_i - \lfloor\Gamma_i\rfloor) \alpha_{ij^i_{\lceil\Gamma_i\rceil}(\bxh)} \lvert\hat{x}_{j^i_{\lceil\Gamma_i\rceil}(\bxh)}\rvert. \label{Ccup4eq}
\end{align}
By Assumption \ref{assumption:gammabar}, $\underline{\Gamma}_i$ is the unique value of $\Gamma_i$ such that \eqref{Ccup4eq} equals $b_i$. Since $\Gamma_i > \underline{\Gamma}_i$ by assumption, \eqref{Ccup4eq} is strictly less than $b_i$, that is, $(\bu^*_i,\by^*_i,z^*_i)$ does not satisfy \eqref{Ccuin1p4}.

To prove the second statement, suppose that $\sum_{j \in J} a_{ij} \hat{x}_j \geq b_i$ for all $i\in I$ and we are given $\bGamma \in \bTheta$. Then it is straightforward to check that $(\bu^*_i,\by^*_i,z^*_i)$ from~\eqref{Ccu_uyz}, for all $i\in I$, satisfies \eqref{Ccuin1p1}-\eqref{Ccuin1u}.
\end{proof}

\begin{proof}[Proof of Proposition \ref{pro:CcuTwoFeas}]
$(\Rightarrow)$ The proof of this implication is divided into two cases. First, we show that feasibility of RLO-CCU-DG implies that $\sum_{j \in J} a_{ij} \hat{x}_j \geq b_i$ for all $i\in I$. We have
\begin{align*}
\sum_{j \in J} a_{ij} \hat{x}_j \geq \sum_{j \in J} a_{ij} \hat{x}_j - (\sum_{j\in J_i}y_{ij} + \Gamma_i z_i) \geq b_i, \quad \forall i\in I,
\end{align*}
where the first inequality is implied by non-negativity of $\bGamma,\by,\bz$, and the second inequality is constraint \eqref{Ccuin1p4}.

Second, assume that $\bTheta\cap\bOmega = \varnothing$. Constraint \eqref{Ccuin2side} states that $\bGamma\in\bOmega$, and by Lemma \ref{lem:CcuPfEq}, constraints \eqref{Ccuin1p1}-\eqref{Ccuin1u} imply $\bGamma\in\bTheta$, therefore RLO-CCU-DG is infeasible.

$(\Leftarrow)$ Assume that $\sum_{j \in J} a_{ij} \hat{x}_j \geq b_i$ for all $i\in I$. Let $\hat{i}\in I$ be an arbitrary index, and let $\bGamma$ be an arbitrary element of $\bTheta\cap\bOmega$. Then, it can be checked that the following is a feasible solution for RLO-CCU-DG:
\begin{subequations}\label{CcuFeasSol}
\begin{align}
\bpi &= \beee_{\hat{i}}, \\
\varphi_{ij^i_k(\bxh)}
& = \left\{
\begin{array}{ll}
\pi_i & \text{if } k=1,\dots,\lfloor\Gamma_i\rfloor, \\
(\Gamma_i - \lfloor\Gamma_i\rfloor)\pi_i & \text{if } k=\lfloor\Gamma_i\rfloor + 1, \\
0 & \text{otherwise},
\end{array}
\right. \forall \; j^i_k(\bxh)\in J_i, i\in I, \label{Ccu2in2optphi} \\
%
(\lambda_{ij},\mu_{ij}) &= \left\{
\begin{array}{ll}
(\varphi_{ij},0) & \text{if } \hat{x}_j \le 0, i=\hat{i}, \\
(0,\varphi_{ij}) & \text{if } \hat{x}_j > 0, i=\hat{i}, \\
(0,0) & \text{otherwise},
\end{array}
\right. \quad \forall j\in J_i, i\in I, \\
(\bu, \by, \bz) &\text{ as in } \eqref{Ccu_uyz}, \\
c_j & = \bar{a}_{\hat{i}j}(\Gamma_{\hat{i}}, \bxh), \quad \forall j\in J.
\end{align}
\end{subequations}
\end{proof}

\begin{proof}[Proof of Theorem \ref{thm:CcuDg}]
By Lemma \ref{lem:CcuPfEq}, constraints \eqref{Ccuin1p1}-\eqref{Ccuin1u} (and thus variables $\bu, \by, \bz$) in RLO-CCU-DG can be replaced by $\bGamma \in \bTheta$. We now omit the details of several steps that are conceptually similar to steps in the proof of Theorem \ref{thm:IuDg}: we eliminate $\bc$ by substituting constraint \eqref{Ccuin1d1} into the objective function of RLO-CCU-DG, we let $s_{ij} = \lambda_{ij} - \mu_{ij}$ for all $i\in I, j\in J_i$, we use Lemma \ref{lem:sij} (see below) to replace constraints on $(\blambda, \bmu)$ with constraints on $\bs$, and then we eliminate $\bs$ by identifying its optimal solution by inspection, giving us the following optimization problem equivalent to RLO-CCU-DG:
\begin{subequations}\label{Ccu2in2eq2}
\begin{align}
\minimize_{\bGamma,\bpi,\bvarphi} \;\;\; & \sum_{i\in I}\left(\sum_{j\in J}a_{ij}\hat{x}_j - b_i\right) \pi_i - \sum_{i\in I}\sum_{j\in J_i}\alpha_{ij}|\hat{x}_j| \varphi_{ij} \\
\mbox{subject to} \;\;\; & \bGamma \in \bTheta \cap \bOmega, \label{Ccu2in2eqside} \\
& \sum_{i\in I}\pi_i = 1, \label{Ccu2in2eqnorm} \\
& 0 \le \varphi_{ij} \le \pi_i, \quad\forall j\in J_i, i\in I, \label{Ccu2in2eqd2} \\
& \sum_{j\in J_i}\varphi_{ij} \le \Gamma_i \pi_i, \quad \forall i\in I, \label{Ccu2in2eqd3} \\
& {\pi}_i \geq 0, \quad \forall i\in I. \label{Ccu2in2eqd5}
\end{align}
\end{subequations}

By inspection we can describe the optimal $\bvarphi$ for a given value of $(\bGamma,\bpi)$. In the objective function, $\bvarphi$ only appears in the term $- \sum_{i\in I}\sum_{j\in J_i}\alpha_{ij}|\hat{x}_j| \varphi_{ij}$, and the only constraints applicable to $\bvarphi$ are \eqref{Ccu2in2eqd2}-\eqref{Ccu2in2eqd3}. This is an instance of the continuous knapsack problem, so an optimal $\bvarphi$ has the form of \eqref{Ccu2in2optphi}. Substituting \eqref{Ccu2in2optphi} into formulation \eqref{Ccu2in2eq2} we obtain an equivalent optimization problem:
\begin{align}\label{Ccu2in2eq3}
\min_{\bGamma,\bpi} \left\{ \sum_{i\in I}\pi_i l_i(\bGamma, \bxh) \colon \eqref{Ccu2in2eqside}-\eqref{Ccu2in2eqnorm}, \eqref{Ccu2in2eqd5} \right\},
\end{align}
where
\begin{align*}
l_i(\bGamma, \bxh) = \sum_{j\in J}a_{ij}\hat{x}_j - b_i  - \sum_{k=1}^{\lfloor\Gamma_i\rfloor}\alpha_{ij^i_k(\bxh)}\lvert\hat{x}_{j^i_k(\bxh)}\rvert - (\Gamma_i - \lfloor\Gamma_i\rfloor)\alpha_{ij^i_{\lceil\Gamma_i\rceil}(\bxh)}\lvert\hat{x}_{j^i_{\lceil\Gamma_i\rceil}(\bxh)}\rvert,
\end{align*}
which is the surplus of constraint $i$ of the forward problem with respect to $\bxh$.

By inspection, we can see that for a given value of $\bGamma$, an optimal $\bpi$ for formulation \eqref{Ccu2in2eq3} equals $\beee_{i^*}$ where $i^*\in\argmin_{i\in I} l_i(\bGamma, \bxh)$. Note that we obtained \eqref{Ccu2in2eq3} from \eqref{Ccu2in2eq2} by setting $\bvarphi$ optimally given any feasible $(\bGamma,\bpi)$. Hence any $\bpi$ that is optimal for \eqref{Ccu2in2eq3} must also be optimal for \eqref{Ccu2in2eq2}. This means that, without changing the optimal value, we can restrict $\bpi$ to be binary in problem \eqref{Ccu2in2eq2}. The resulting problem is equivalent to $\min_{i\in I} \left\{ \eqref{CcuMipt} \right\}$. By definition, $(\bGamma^{(i)}, \bvarphi^{(i)})$ is an optimal solution for \eqref{CcuMipt}, and the optimal value of the outer problem is $\min_{i\in I} \{t_i \}$. Finally, $\bpi = \beee_{i^*}$, \eqref{Ccuin1d1}, and the definition and optimal value of $\bs$ imply that $\bc = \bab_{i^*}(\Gamma^*_{i^*}, \bxh)$.

By Assumption \ref{assumption:ccu_trivial} and the definition of $\bar{a}_{ij}$, any feasible $\bGamma$ satisfies $\bab_i(\Gamma_i, \bx) \ne \bzero$ for all $i\in I, \bx\in\mathbb{R}^n$, and therefore $\bc \ne \bzero$.
\end{proof}

\begin{proof}[Proof of Proposition \ref{pro:CcuFeas}]
$(\Rightarrow)$ The proof of this implication is divided into two cases. First, it can be shown that feasibility of RLO-CCU-SD implies that $\sum_{j \in J} a_{ij} \hat{x}_j \geq b_i$ for all $i\in I$, using the same argument as in the proof of Proposition \ref{pro:CcuTwoFeas}.

Second, assume that $\hat{I} = \varnothing$. We will show that RLO-CCU-SD is infeasible. Since $\hat{I} = \varnothing$, we have
\begin{align*}
0 & < \sum_{j \in J}a_{ij} \hat{x}_{j} - b_i - \sum_{j\in J_i}\alpha_{ij}|\hat{x}_{j}|, \quad\forall i\in I, \\
0 & < \sum_{j \in J}a_{ij} \hat{x}_{j} - b_i - \sum_{j\in J_i}\alpha_{ij}\hat{x}_{j}, \quad\forall i\in I, \\
0 & < \sum_{i\in I}\pi_i(\sum_{j \in J}a_{ij} \hat{x}_{j} - b_i - \sum_{j\in J_i}\alpha_{ij}\hat{x}_{j}) \\
& \le \sum_{i\in I} \pi_i (\sum_{j \in J}a_{ij} \hat{x}_{j} - b_i) - \sum_{i\in I}\sum_{j\in J_i}\varphi_{ij}\alpha_{ij}\hat{x}_{j} \\
& \le \sum_{i\in I} \pi_i (\sum_{j \in J}a_{ij} \hat{x}_{j} - b_i) + \sum_{i\in I}\sum_{j\in J_i}(\lambda_{ij} - {\mu}_{ij})\alpha_{ij}\hat{x}_{j},
\end{align*}
where the third inequality is implied by $\beee^\intercal\bpi = 1, \bpi \geq \bzero$; the fourth inequality is implied by constraint \eqref{Ccuin1d2}; and the fifth inequality is implied by $-\varphi_{ij} \le (\lambda_{ij} - {\mu}_{ij})$, itself implied by constraints \eqref{Ccuin1d3} and \eqref{Ccuin1d5}. Now substituting \eqref{Ccuin1d1} into \eqref{Ccuin1sd},
\begin{align*}
\sum_{i\in I} \pi_i (\sum_{j \in J}a_{ij} \hat{x}_{j} - b_i) + \sum_{i\in I}\sum_{j\in J_i}(\lambda_{ij} - {\mu}_{ij})\alpha_{ij}\hat{x}_{j} = 0,
\end{align*}
which is a contradiction.

$(\Leftarrow)$ Assume that $\sum_{j \in J} a_{ij} \hat{x}_j \geq b_i$ for all $i\in I$ and $\hat{I} \ne \varnothing$. Let $\hat{i}\in\hat{I}$ be an arbitrary index. Then, it can be checked that the following is a feasible solution for RLO-CCU-SD: $(\bc,\bu,\by,\bz,\bpi,\bvarphi,\blambda,\bmu)$ as in \eqref{CcuFeasSol}, and
\begin{align*}
\Gamma_i &= \left\{
\begin{array}{ll}
\underline{\Gamma}_i & \text{if } i=\hat{i}, \\
0 & \text{if } i\in I\setminus\{\hat{i}\},
\end{array}
\right. 
\end{align*}
\end{proof}

\begin{proof}[Proof of Lemma \ref{lem:CcuFeas2}]
To prove the first statement, we first note that by Lemma \ref{lem:CcuPfEq}, constraints \eqref{Ccuin1p1}-\eqref{Ccuin1u} imply $\bGamma \in \bTheta$. So we only need to show that the constraints of RLO-CCU-SD imply $\Gamma_{\hat{i}} = \underline{\Gamma}_{\hat{i}}$, for some $\hat{i}\in\hat{I}$. If we substitute \eqref{Ccuin1d1} into \eqref{Ccuin1sd}, let $s_{ij} = \lambda_{ij} - \mu_{ij}$, and use reasoning similar to the proof of Lemma \ref{lem:sij} (see below), we get
\begin{align*}
\sum_{i\in I} \pi_i \left(\sum_{j \in J}a_{ij} \hat{x}_{j} - b_i\right) \le \sum_{i\in I}\sum_{j\in J_i}\alpha_{ij}|\hat{x}_{j}|\varphi_{ij}.
\end{align*}
For a given $(\bGamma,\bpi)$, the constraints applicable to $\bvarphi$ are \eqref{Ccuin1d2}, \eqref{Ccuin1d4}, and nonnegativity. As such, the maximum value of $\sum_{j\in J_i}\alpha_{ij}|\hat{x}_{j}|\varphi_{ij}$ over feasible $\bvarphi_i$, for all $i\in I$, is the optimal value of the following optimization problem:
\begin{equation}
\begin{aligned}\label{knapsack}
\maximize_{\bvarphi_i} \;\;\; & \sum_{j\in J_i}\alpha_{ij}|\hat{x}_{j}|\varphi_{ij} \\
\mbox{subject to} \;\;\; & 0 \le \varphi_{ij} \le \pi_i, \;\forall j\in J_i, \\
& \sum_{j\in J_i}\varphi_{ij} \le \Gamma_i\pi_i.
\end{aligned}
\end{equation}
Formulation~\eqref{knapsack} is an instance of the continuous knapsack problem, equivalent to the one which appears in the proof of Theorem \ref{thm:CcuDg}, so its optimal value is
\begin{align*}
\sum_{k=1}^{\lfloor\Gamma_i\rfloor}\alpha_{ij^i_k(\bxh)}|\hat{x}_{j^i_k(\bxh)}|\pi_i + (\Gamma_i - \lfloor\Gamma_i\rfloor)\alpha_{ij^i_{\lceil\Gamma_i\rceil}(\bxh)}|\hat{x}_{j^i_{\lceil\Gamma_i\rceil}(\bxh)}|\pi_i,
\end{align*}
and we can conclude that
\begin{align}\label{temp5}
\sum_{i\in I} \pi_i \left(\sum_{j \in J}a_{ij} \hat{x}_{j} - b_i\right) \le \sum_{i\in I}\pi_i \left( \sum_{k=1}^{\lfloor\Gamma_i\rfloor}\alpha_{ij^i_k(\bxh)}|\hat{x}_{j^i_k(\bxh)}| + (\Gamma_i - \lfloor\Gamma_i\rfloor)\alpha_{ij^i_{\lceil\Gamma_i\rceil}(\bxh)}|\hat{x}_{j^i_{\lceil\Gamma_i\rceil}(\bxh)}| \right).
\end{align}
Now, assume to the contrary that $\Gamma_i < \underline{\Gamma}_i$ for all $i\in\hat{I}$. Using this assumption and the fact that $\underline{\Gamma}_i$ is the smallest value of $\Gamma_i$ to satisfy \eqref{gammabar}, we can deduce a contradiction with \eqref{temp5}.

To prove the second statement, assume $\sum_{j \in J} a_{ij} \hat{x}_j \geq b_i$ for all $i\in I$ and let $(\bc,\bu,\by,\bz,\bpi,\bvarphi,\blambda,\bmu)$ be defined as in the proof of Proposition \ref{pro:CcuFeas}. With the assumed $\bGamma$, this solution is feasible for RLO-CCU-SD.
\end{proof}

\begin{proof}[Proof of Theorem \ref{thm:CcuAlg}]
By Lemma \ref{lem:CcuFeas2}, solving RLO-CCU-SD is equivalent to solving the following optimization problem for all $\hat{i}\in\hat{I}$, and taking the minimum over all $\lvert\hat{I}\rvert$ optimal values:
\begin{equation}
\begin{aligned}\label{Ccuin1eq}
\minimize_{\bGamma} \;\;\; & \lVert\bGamma - \bGammah \rVert \\
\mbox{subject to} \;\;\; & \Gamma_{\hat{i}} = \underline{\Gamma}_{\hat{i}}, \\
& 0 \le \Gamma_i \le \underline\Gamma_i, \quad\forall i \in \hat{I}, \\
& 0 \le \Gamma_i \le \lvert J_i\rvert, \quad\forall i \in I\setminus\hat{I},
\end{aligned}
\end{equation}
where the cost vector corresponding to the $\hat{i}$-th formulation is $\bc = \bab_i(\Gamma_i, \bxh)$, as described in the proof of Lemma \ref{lem:CcuFeas2}.

Suppose we consider formulation \eqref{Ccuin1eq} for some fixed $\hat{i}\in\hat{I}$. For all $i\in I$, the variable $\Gamma_i$ is included only in a term $(\Gamma_i - \hat{\Gamma}_i)$ in the objective function. For $i\in I\setminus\hat{I}$, because $0 \le \hat{\Gamma}_i \le |J_i|$ by Assumption \ref{assumption:gammabounds}, the optimal solution is $\Gamma_i = \hat{\Gamma}_i$, so $\Gamma_i - \hat{\Gamma}_i = 0 = g_i$. For $i\in\hat{I}\setminus\{\hat{i}\}$, we will have either $0 \le \hat{\Gamma}_i \le |\underline{\Gamma}_i|$ or $|\underline{\Gamma}_i| < \hat{\Gamma}_i \le |J_i|$ by Assumption \ref{assumption:gammabounds}, hence the optimal solution is $\Gamma_i = \min\{\hat{\Gamma}_i, \underline{\Gamma}_i\}$, and then $\Gamma_i - \hat{\Gamma}_i = g_i$. For $i=\hat{i}$, we require $\Gamma_{\hat{i}} = \underline{\Gamma}_{\hat{i}}$, so $\Gamma_{\hat{i}} - \hat{\Gamma}_{\hat{i}} = f_{\hat{i}}$.

In other words, all objective function terms $(\Gamma_i - \hat{\Gamma}_i)$ equal $g_i$, except $(\Gamma_{\hat{i}} - \hat{\Gamma}_{\hat{i}}) = f_{\hat{i}}$. It follows that the optimal value of the $\hat{i}$-th formulation~\eqref{Ccuin1eq} is $\lVert\bg + (f_{\hat{i}} - g_{\hat{i}})\beee_{\hat{i}}\rVert$. Thus the optimal value of RLO-CCU-SD is $\min_{\hat{i}\in \hat{I}} \left\{ \lVert\bg + (f_{\hat{i}} - g_{\hat{i}})\beee_{\hat{i}}\rVert \right\}$. Finally, by the same reasoning as in Theorem \ref{thm:CcuDg}, the optimal solution \eqref{CcuTwogsol}-\eqref{CcuTwocsol} satisfies $\bc \ne \bzero$ and $\bab_i(\Gamma_i,\bx) \ne \bzero$ for all $i\in I, \bx\in\mathbb{R}^n$.
\end{proof}

\begin{lem}
\label{lem:IuFeas2}
Every feasible solution for RLO-IU-SD satisfies \eqref{Iuin1eq2}-\eqref{Iuin1eq4} for some $\hat{i}\in I$. Conversely, for every $\balpha$ satisfying \eqref{Iuin1eq2}-\eqref{Iuin1eq4} for some $\hat{i}\in I$, there exists $(\bc,\bu,\bpi,\blambda,\bmu)$ such that $(\balpha,\bc,\bu,\bpi,\blambda,\bmu)$ is feasible for RLO-IU-SD.
\end{lem}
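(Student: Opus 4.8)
The plan is to mirror the proof of Lemma~\ref{lem:LpFeas2}, adapting it to the auxiliary variables $\bu$ and the bilinear dual feasibility constraint~\eqref{Iuin1d1} that are specific to the interval-uncertainty robust counterpart. For the forward direction, I start from an arbitrary feasible $(\balpha,\bc,\bu,\bpi,\blambda,\bmu)$. Condition~\eqref{Iuin1eq4} is literally the nonnegativity constraint~\eqref{Iuin1u}, so nothing is needed there. For the surplus condition~\eqref{Iuin1eq3}, I would observe that \eqref{Iuin1p1}--\eqref{Iuin1p2} together with $\balpha \geq \bzero$ force $u_{ij} \geq \alpha_{ij}\lvert\hat{x}_j\rvert$ for every $j\in J_i$; substituting this lower bound into the primal constraint~\eqref{Iuin1p3} yields $\sum_{j\in J} a_{ij}\hat{x}_j - \sum_{j\in J_i}\alpha_{ij}\lvert\hat{x}_j\rvert \geq b_i$ for all $i\in I$, which is exactly~\eqref{Iuin1eq3}.

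The remaining piece of the first statement, the active-constraint condition~\eqref{Iuin1eq2}, is where the real work lies, and I expect it to be the main obstacle. I would argue by contradiction, assuming $\sum_{j\in J} a_{ij}\hat{x}_j - \sum_{j\in J_i}\alpha_{ij}\lvert\hat{x}_j\rvert > b_i$ for \emph{all} $i\in I$. Substituting the dual feasibility constraint~\eqref{Iuin1d1} into strong duality~\eqref{Iuin1sd} and introducing $s_{ij} = \lambda_{ij} - \mu_{ij}$, I would invoke Lemma~\ref{lem:sij} to replace \eqref{Iuin1d2}--\eqref{Iuin1d3} by the bounds $s_{ij} \in [-\pi_i,\pi_i]$. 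Since $\alpha_{ij}\geq 0$ and $\lvert s_{ij}\rvert \leq \pi_i$, each cross term obeys $\alpha_{ij} s_{ij}\hat{x}_j \geq -\alpha_{ij}\pi_i\lvert\hat{x}_j\rvert$, which collapses the substituted strong-duality equation into $0 \geq \sum_{i\in I}\pi_i\left(\sum_{j\in J}a_{ij}\hat{x}_j - \sum_{j\in J_i}\alpha_{ij}\lvert\hat{x}_j\rvert - b_i\right)$. The normalization~\eqref{Iuin1norm} guarantees $\pi_i > 0$ for at least one $i$, so under the contradiction hypothesis the right-hand side is strictly positive, giving the needed contradiction. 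The delicate part here is correctly signing the bilinear contribution $\alpha_{ij}s_{ij}\hat{x}_j$; Lemma~\ref{lem:sij} is the key enabler that reduces the two-sided dual variables to a single bounded variable.

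For the converse, given $\balpha$ satisfying \eqref{Iuin1eq2}--\eqref{Iuin1eq4} with active index $\hat{i}$, I would construct the remaining variables exactly as in the feasibility certificate~\eqref{IuFeasSol}: set $\bpi = \beee_{\hat{i}}$ and $u_{ij} = \alpha_{ij}\lvert\hat{x}_j\rvert$, assign $(\lambda_{ij},\mu_{ij})$ according to the sign of $\hat{x}_j$ so that $\alpha_{\hat{i}j}(\lambda_{\hat{i}j}-\mu_{\hat{i}j}) = -\sgn(\hat{x}_j)\alpha_{\hat{i}j}$ for $j\in J_{\hat{i}}$ and $(\lambda_{ij},\mu_{ij})=(0,0)$ for $i\ne\hat{i}$, and let $\bc = \bab_{\hat{i}}(\balpha_{\hat{i}},\bxh)$ as defined by~\eqref{Iuin1d1}. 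Primal feasibility \eqref{Iuin1p1}--\eqref{Iuin1p3} follows from $u_{ij}=\alpha_{ij}\lvert\hat{x}_j\rvert$ and~\eqref{Iuin1eq3}, while \eqref{Iuin1norm} and the dual constraints \eqref{Iuin1d2}--\eqref{Iuin1d3} hold by construction. The only thing to verify is strong duality: substituting the construction into the dual expression for $\bc$ gives $\sum_{j\in J}c_j\hat{x}_j = \sum_{j\in J}a_{\hat{i}j}\hat{x}_j - \sum_{j\in J_{\hat{i}}}\alpha_{\hat{i}j}\lvert\hat{x}_j\rvert$, where the sign assignment of $(\lambda,\mu)$ is precisely what converts $\alpha_{\hat{i}j}(\lambda_{\hat{i}j}-\mu_{\hat{i}j})\hat{x}_j$ into $-\alpha_{\hat{i}j}\lvert\hat{x}_j\rvert$ (the case $\hat{x}_j=0$ is immaterial since it contributes zero on both sides). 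By~\eqref{Iuin1eq2} this equals $b_{\hat{i}} = \sum_{i\in I}b_i\pi_i$, so~\eqref{Iuin1sd} holds and the constructed point is feasible for RLO-IU-SD, completing the proof.
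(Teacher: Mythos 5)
Your proposal is correct and follows essentially the same route as the paper's proof: the forward direction derives \eqref{Iuin1eq3} from \eqref{Iuin1p1}--\eqref{Iuin1u}, then obtains \eqref{Iuin1eq2} by substituting \eqref{Iuin1d1} into \eqref{Iuin1sd}, invoking Lemma~\ref{lem:sij} to bound $s_{ij}=\lambda_{ij}-\mu_{ij}$ in $[-\pi_i,\pi_i]$, and reaching a contradiction via the normalization constraint, while the converse uses exactly the certificate \eqref{IuFeasSol}. The only cosmetic difference is that you bound the aggregate bilinear term in one step where the paper argues index-by-index over $\{i:\pi_i>0\}$; the substance is identical.
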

\begin{proof}To prove the first statement, we first note that constraints \eqref{Iuin1p1}-\eqref{Iuin1u} imply \eqref{Iuin1eq3}. To complete the proof, we assume $\sum_{j\in J}a_{ij} \hat{x}_j -\sum_{j\in J_i}{{\alpha}_{ij}|\hat{x}_{j}|} > b_i$ for all $i\in I$ and derive a contradiction, following similar reasoning as in the proof of Lemma \ref{lem:LpFeas2}. Substituting \eqref{Iuin1d1} into \eqref{Iuin1sd}, we get
\begin{equation*}
\sum_{i\in I} \pi_i \sum_{j \in J}a_{ij} \hat{x}_{j} + \sum_{i\in I}\sum_{j\in J_i}(\lambda_{ij} - {\mu}_{ij})\alpha_{ij}\hat{x}_{j} = \sum_{i\in I} \pi_i b_i.
\end{equation*}
Let $s_{ij} = \lambda_{ij} - \mu_{ij}$ for all $j\in J_i, i\in I$. By Lemma \ref{lem:sij} (see below), $(\blambda,\bmu)$ satisfies \eqref{Iuin1d2} and \eqref{Iuin1d3} if and only if $s_{ij} \in [-\pi_i, \pi_i]$ for all $j\in J_i, i\in I$. Thus, the feasible region of RLO-IU-SD is equivalent to
\begin{subequations}\label{Iuin1eq2a}
\begin{align}
& \sum_{i\in I} \pi_i \sum_{j \in J}a_{ij} \hat{x}_{j} + \sum_{i\in I}\sum_{j\in J_i}s_{ij}\alpha_{ij}\hat{x}_{j} = \sum_{i\in I} \pi_i b_i, \label{temp1} \\
& -\pi_i \le s_{ij} \le \pi_i, \quad \forall j\in J_i, i\in I, \label{temp2} \\
& \beee^\intercal\bpi = 1, \bpi \geq \bzero, \label{temp} \\
& \eqref{Iuin1p1}-\eqref{Iuin1u}.
\end{align}
\end{subequations}

Constraint \eqref{temp} ensures that $\overline{I} := \{i\in I \colon \pi_i > 0\} \ne \varnothing$. Since we have assumed $\sum_{j \in J} a_{ij} \hat{x}_{j} - \sum_{j\in J_{i}}\alpha_{ij}|\hat{x}_{j}| > b_{i}$ for all $i\in I$, we have
\begin{align*}
\pi_{i}\sum_{j \in J} a_{ij} \hat{x}_{j} - \pi_{i}\sum_{j\in J_{i}}\alpha_{ij}|\hat{x}_{j}| > \pi_{i}b_{i}, \quad \forall i \in \overline{I}.
\end{align*}
For all $j\in J_i, i\in I$, $s_{ij} \in [-\pi_i,\pi_i]$ implies that $\sum_{j\in J_{i}}s_{ij}\alpha_{ij}\hat{x}_{j} \geq - \pi_{i}\sum_{j\in J_{i}}\alpha_{ij}|\hat{x}_{j}|$, and therefore
\begin{align*}
\pi_{i}\sum_{j \in J} a_{ij} \hat{x}_{j} + \sum_{j\in J_{i}}s_{ij}\alpha_{ij}\hat{x}_{j} > \pi_{i}b_{i}, \quad \forall i \in \overline{I}.
\end{align*}
Since $s_{ij} = 0$ if $\pi_i = 0$,
\begin{align*}
\sum_{i\in I}\pi_{i}\sum_{j \in J} a_{ij} \hat{x}_{j} + \sum_{i\in I}\sum_{j\in J_{i}}s_{ij}\alpha_{ij}\hat{x}_{j} > \sum_{i\in I}\pi_{i} b_{i},
\end{align*}
which contradicts constraint \eqref{temp1}.

To prove the second statement, let $\balpha$ satisfy \eqref{Iuin1eq2}-\eqref{Iuin1eq4}, $\hat{i}$ be defined by \eqref{Iuin1eq2}, and $(\bc,\bu,\bpi,\blambda,\bmu)$ be defined as in the proof of Proposition \ref{pro:IuFeas}. This solution is feasible for RLO-IU-SD.
\end{proof}

\begin{lem}\label{lem:sij}
Let $s_{ij} = \lambda_{ij} - \mu_{ij}$ for all $j\in J_i, i\in I$. If $(\blambda,\bmu)$ satisfies \eqref{Iuin1d2} and non-negativity, then $s_{ij} \in [-\pi_i, \pi_i]$ for all $j\in J_i, i\in I$. Conversely, if $s_{ij} \in [-\pi_i, \pi_i]$ for all $j\in J_i, i\in I$, then there exists $(\blambda,\bmu)$ satisfying \eqref{Iuin1d2}, non-negativity, and $s_{ij} = \lambda_{ij} - \mu_{ij}$ for all $j\in J_i, i\in I$.
\end{lem}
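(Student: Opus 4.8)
The plan is to prove both inclusions through the explicit substitution that constraint \eqref{Iuin1d2} makes available. Since \eqref{Iuin1d2} fixes $\lambda_{ij} + \mu_{ij} = \pi_i$, combining this with the definition $s_{ij} = \lambda_{ij} - \mu_{ij}$ determines the pair $(\lambda_{ij}, \mu_{ij})$ uniquely in terms of $\pi_i$ and $s_{ij}$, namely $\lambda_{ij} = (\pi_i + s_{ij})/2$ and $\mu_{ij} = (\pi_i - s_{ij})/2$. Every claim then reduces to translating the sign restrictions on $(\lambda_{ij}, \mu_{ij})$ into the interval bound on $s_{ij}$, and conversely. I would handle each index pair $(i,j)$ independently, since neither the hypotheses nor the conclusion couple distinct components.

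For the forward direction, I would assume that $(\blambda, \bmu)$ satisfies \eqref{Iuin1d2} together with $\lambda_{ij}, \mu_{ij} \geq 0$. Writing $s_{ij} = \pi_i - 2\mu_{ij}$ via \eqref{Iuin1d2} and invoking $\mu_{ij} \geq 0$ yields $s_{ij} \leq \pi_i$; symmetrically, $s_{ij} = 2\lambda_{ij} - \pi_i$ together with $\lambda_{ij} \geq 0$ yields $s_{ij} \geq -\pi_i$. Hence $s_{ij} \in [-\pi_i, \pi_i]$.

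For the converse, I would take any $s_{ij} \in [-\pi_i, \pi_i]$ and define $\lambda_{ij} = (\pi_i + s_{ij})/2$ and $\mu_{ij} = (\pi_i - s_{ij})/2$. A direct check shows $\lambda_{ij} + \mu_{ij} = \pi_i$, so \eqref{Iuin1d2} holds, and $\lambda_{ij} - \mu_{ij} = s_{ij}$ as required. Non-negativity follows because the bounds $s_{ij} \geq -\pi_i$ and $s_{ij} \leq \pi_i$ give $\pi_i + s_{ij} \geq 0$ and $\pi_i - s_{ij} \geq 0$, respectively.

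There is essentially no hard step; the argument is a one-line change of variables. The only point requiring care is that the converse silently uses $\pi_i \geq 0$, which is guaranteed because a nonempty interval $[-\pi_i, \pi_i]$ forces $-\pi_i \leq \pi_i$; alternatively, $\pi_i \geq 0$ is already imposed by the dual feasibility constraints of the models in which the lemma is applied. I would state this observation explicitly so that the non-negativity of $\lambda_{ij}$ and $\mu_{ij}$ in the converse is fully justified.
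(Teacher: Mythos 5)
Your proof is correct and follows essentially the same route as the paper's: the forward direction is the same elementary algebra using $\lambda_{ij},\mu_{ij}\geq 0$ and $\lambda_{ij}+\mu_{ij}=\pi_i$, and your converse construction $\lambda_{ij}=(\pi_i+s_{ij})/2$, $\mu_{ij}=(\pi_i-s_{ij})/2$ is exactly what the paper's case-split definition collapses to after simplification. Your added remark that $\pi_i\geq 0$ is implicit in the nonemptiness of $[-\pi_i,\pi_i]$ is a fine (if minor) point of extra care.
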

\begin{proof} To prove the first statement, note that since $\lambda_{ij} + \mu_{ij} = \pi_i$ and $\lambda_{ij}, \mu_{ij}, \pi_i \geq 0$, it follows that $\lambda_{ij} \le \pi_i$, for all $j\in J_i, i\in I$. Since $\mu_{ij} \geq 0$, it further follows that $\lambda_{ij} - \mu_{ij} \le \pi_i$, i.e., $s_{ij} \le \pi_i$. The proof of $-\pi_i \le s_{ij}$ is similar.

To prove the second statement, we will construct $(\blambda,\bmu)$ satisfying the required conditions. For all $j\in J_i, i\in I$, let
\begin{align*}
(\lambda_{ij}, \mu_{ij}) = \left\{
\begin{array}{ll}
\left(s_{ij} + \frac{\pi_i-s_{ij}}{2}, \frac{\pi_i-s_{ij}}{2}\right) & \text{if } s_{ij} \geq 0, \\
\left(\frac{\pi_i+s_{ij}}{2}, -s_{ij} + \frac{\pi_i+s_{ij}}{2}\right) & \text{otherwise.}
\end{array}
\right.
\end{align*}
\end{proof}

\begin{lem}
\label{lem:aux}
The solution $(\bu^*_i,\by^*_i,z^*_i)$ defined in \eqref{Ccu_uyz} is an optimal solution to \eqref{aux}.
\end{lem}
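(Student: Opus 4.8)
The plan is to solve the linear program \eqref{aux} by eliminating the variables $\bu_i$ and $\by_i$ in turn, reducing the problem to a univariate minimization over $z_i$. First I would note that, since $\alpha_{ij} \geq 0$, the constraint $-u_{ij} \le \alpha_{ij}\hat{x}_j \le u_{ij}$ is equivalent to $u_{ij} \geq \alpha_{ij}\lvert\hat{x}_j\rvert$. Because $u_{ij}$ enters the problem only through the constraint $y_{ij} + z_i \geq u_{ij}$, and raising $u_{ij}$ can only tighten that constraint (while $u_{ij}$ is absent from the objective), there is an optimal solution in which each $u_{ij}$ sits at its lower bound, i.e. $u_{ij} = u^*_{ij} = \alpha_{ij}\lvert\hat{x}_j\rvert$.

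With $\bu_i$ fixed at $\bu^*_i$, for any fixed $z_i \geq 0$ the objective term $\sum_{j\in J_i} y_{ij}$ is minimized by taking each $y_{ij}$ as small as its constraints $y_{ij} \geq u^*_{ij} - z_i$ and $y_{ij}\geq 0$ permit, namely $y_{ij} = \max\{u^*_{ij} - z_i,\, 0\} = y^*_{ij}$. Substituting, the residual problem is to minimize over $z_i \geq 0$ the convex, piecewise-linear univariate function
\begin{align*}
\phi(z_i) = \sum_{j\in J_i} \max\{u^*_{ij} - z_i,\, 0\} + \Gamma_i z_i.
\end{align*}

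The final step is to locate the minimizer of $\phi$. Ordering the values $u^*_{ij}$ decreasingly as $u^*_{i,j^i_1(\bxh)} \geq \cdots \geq u^*_{i,j^i_{\lvert J_i\rvert}(\bxh)}$, on the range of $z_i$ where exactly $k$ of them strictly exceed $z_i$ the function $\phi$ has slope $\Gamma_i - k$. This slope is nondecreasing as $z_i$ grows (because $k$ decreases), reconfirming convexity; it is negative precisely while $k > \Gamma_i$ and nonnegative once $k \leq \Gamma_i$. Hence a minimizer is the breakpoint at which the count of values exceeding $z_i$ first drops to $\lceil\Gamma_i\rceil$, namely $z_i = u^*_{i,j^i_{\lceil\Gamma_i\rceil}(\bxh)} = z^*_i$. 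I would check the two cases: when $\Gamma_i$ is not an integer the minimizer is unique at this breakpoint, while when $\Gamma_i$ is an integer $\phi$ is flat just to the left of it, but $z^*_i$ still attains the minimum. In either case $(\bu^*_i, \by^*_i, z^*_i)$ is optimal for \eqref{aux}.

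The main obstacle is the bookkeeping in this last step: correctly matching the minimizing breakpoint to the $\lceil\Gamma_i\rceil$-th largest of the quantities $\alpha_{ij}\lvert\hat{x}_j\rvert$, and handling the ceiling arithmetic together with possible ties among these quantities. Everything else is the routine elimination of variables from a linear program, so once the reduction to the univariate $\phi$ is in place the conclusion follows directly. An alternative would be to certify optimality via LP duality and complementary slackness, but the sequential elimination above is cleaner and mirrors the intuition that the optimal $z^*_i$ recovers the familiar cap on the $\lfloor\Gamma_i\rfloor$ largest deviations plus a fractional term, which is exactly the protection function appearing in \eqref{Ccup4eq}.
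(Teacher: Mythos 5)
Your proposal is correct, and its first half coincides exactly with the paper's: the paper also begins by noting (``by inspection'') that an optimal solution has $u^*_{ij} = \alpha_{ij}\lvert\hat{x}_j\rvert$ and $y^*_{ij} = \max\{u^*_{ij} - z_i, 0\}$ for a given $z_i$, so that only the optimality of $z^*_i$ remains to be shown. Where you genuinely diverge is in how that last step is certified. The paper argues by direct comparison: it takes an arbitrary competitor $\tilde{z}_i \ne z^*_i$, defines $\hat{k} = \lvert\{j\in J_i \colon \alpha_{ij}\lvert\hat{x}_j\rvert > \tilde{z}_i\}\rvert$, splits into the cases $\tilde{z}_i > z^*_i$ and $\tilde{z}_i < z^*_i$, and verifies algebraically that $f(\bu^*_i,\by^*_i,\tilde{z}_i) - f(\bu^*_i,\by^*_i,z^*_i) \geq 0$. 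You instead exploit structure: the residual function $\phi(z_i) = \sum_{j\in J_i}\max\{u^*_{ij}-z_i,0\} + \Gamma_i z_i$ is convex and piecewise linear with slope $\Gamma_i - k$ wherever exactly $k$ of the $u^*_{ij}$ exceed $z_i$, so the slope passes from nonpositive to positive at the $\lceil\Gamma_i\rceil$-th largest value, which is therefore a global minimizer. Your route is more conceptual and arguably cleaner; it makes transparent why ties among the $\alpha_{ij}\lvert\hat{x}_j\rvert$ and the integer-versus-fractional dichotomy for $\Gamma_i$ are harmless, facts that are buried in the paper's case algebra. The paper's route is more elementary, needing nothing beyond substitution and inequalities. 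One caution on your wording: ``the breakpoint at which the count of values exceeding $z_i$ first drops to $\lceil\Gamma_i\rceil$'' is accurate only if ``exceeding'' is read weakly (values $\geq z_i$); under strict counting, the count reaches $\lceil\Gamma_i\rceil$ one breakpoint earlier, at the $(\lceil\Gamma_i\rceil+1)$-th largest value. Since your slope analysis and the explicit formula $z^*_i = u^*_{i,j^i_{\lceil\Gamma_i\rceil}(\bxh)}$ are both correct, this is a slip of phrasing rather than a gap in the argument.
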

\begin{proof} By inspection, it is clear that an optimal solution has $u^*_{ij}=\alpha_{ij} \lvert\hat{x}_{j}\rvert$ and $y^*_{ij}=\max\{u^*_{ij}-z_i,0\}$ for a given $z_i$, so we only need to prove that $z_i^*=\alpha_{ij^i_{\lceil\Gamma_i\rceil}(\bxh)} \lvert\hat{x}_{j^i_{\lceil\Gamma_i\rceil}(\bxh)}\rvert$ is optimal. Let $f(\bu_i, \by_i, z_i)$ denote the objective function of \eqref{aux}. We have
\begin{align*}
f(\bu^*_i,\by^*_i,z^*_i) &= \sum_{j\in J_i}\max\{\alpha_{ij} |\hat{x}_j| - z_i^*, 0\} + \Gamma_i z_i^* \\
&= \sum_{k=1}^{\lceil\Gamma_i\rceil - 1}(\alpha_{ij^i_k(\bxh)}|\hat{x}_{ij^i_k(\bxh)}| - z_i^*) + \Gamma_i z_i^*,
\end{align*}
and we will show that $f(\bu^*_i,\by^*_i,\tilde{z}_i) \geq f(\bu^*_i,\by^*_i,z^*_i)$ for any $\tilde{z}_i \ne z_i^*$. We define $\hat{k} = |\{j\in J_i\colon \alpha_{ij} \lvert\hat{x}_{j}\rvert > \tilde{z}_i\}|$.

We distinguish two cases, and first consider the case $\tilde{z}_i > z_i^*$. In this case,
\begin{align*}
f(\bu^*_i,\by^*_i,z^*_i) &= \sum_{k=1}^{\hat{k}}(\alpha_{ij^i_k(\bxh)}|\hat{x}_{ij^i_k(\bxh)}| - z_i^*) + \sum_{k=\hat{k}+1}^{\lceil\Gamma_i\rceil - 1}(\alpha_{ij^i_k(\bxh)}|\hat{x}_{ij^i_k(\bxh)}| - z_i^*) + \Gamma_i z_i^*, \\
\intertext{and}
f(\bu^*_i,\by^*_i,\tilde{z}_i) &= \sum_{j\in J_i}\max\{\alpha_{ij} |\hat{x}_j| - \tilde{z}_i, 0\} + \Gamma_i \tilde{z}_i \\
&= \sum_{k=1}^{\hat{k}}(\alpha_{ij^i_k(\bxh)}|\hat{x}_{ij^i_k(\bxh)}| - \tilde{z}_i) + \Gamma_i \tilde{z}_i.
\end{align*}
Then
\begin{align*}
& f(\bu^*_i,\by^*_i,\tilde{z}_i) - f(\bu^*_i,\by^*_i,z^*_i) \\
=\; & \hat{k}(z^*_i - \tilde{z}_i) - \sum_{k=\hat{k}+1}^{\lceil\Gamma_i\rceil - 1}\alpha_{ij^i_k(\bxh)}|\hat{x}_{ij^i_k(\bxh)}| + (\lceil\Gamma_i\rceil - 1 - \hat{k})z^*_i + \Gamma_i(\tilde{z}_i - z^*_i) \\
=\; & (\Gamma_i - \hat{k})\tilde{z}_i - \sum_{k=\hat{k}+1}^{\lceil\Gamma_i\rceil - 1}\alpha_{ij^i_k(\bxh)}|\hat{x}_{ij^i_k(\bxh)}| - \left(\Gamma_i - (\lceil\Gamma_i\rceil - 1)\right)\alpha_{ij^i_{\lceil\Gamma_i\rceil}(\bxh)} |\hat{x}_{j^i_{\lceil\Gamma_i\rceil}(\bxh)}|
\end{align*}
which is greater than or equal to 0 since $\tilde{z}_i \geq \alpha_{ij^i_k(\bxh)} |\hat{x}_{j^i_k(\bxh)}|$ for all $k = \hat{k} + 1, \dots, \lceil\Gamma_i\rceil$. The proof for the case $z^*_i > \tilde{z}_i$ is similar.
\end{proof}

\section{Numerical examples of NLO-SD with Assumption \ref{assumption:nominal_trivial} violated}\label{sec:a2}

In this section, we provide numerical examples to illustrate that it is possible to circumvent a trivial solution for NLO-SD that may be induced when Assumption \ref{assumption:nominal_trivial} is not satisfied. Of the two conditions required by the assumption for all $i\in I$, $b_i\ne 0$ can be considered strong, whereas $\bah_i \ne \bzero$ can be considered mild, so we focus on the case where the former requirement is not satisfied. In this case, we propose three possible ways to avert a trivial solution: perturbing either $b_i$, $\bah_i$, or $\xi_i$.

\begin{exmp}[\textbf{NLO-SD with trivial cost vector and constraint}]\label{ex:LpTrivialCost}

Let the norm in the objective function be the Euclidean norm and let $\bxi = \beee$. Let the observed solution be $\bxh = (2, 2)$, and let the remaining problem data be
\begin{align*}
\bAh =
\begin{pmatrix}
1 & 0 \\
0 & 1 \\
1 & 1 \\
-1 & -1
\end{pmatrix}, \quad
\bb =
\begin{pmatrix}
-3 \\
-3 \\
0 \\
-10
\end{pmatrix}.
\end{align*}
The prior feasible region is illustrated in Figure \ref{fig:LpTrivialCost1}. The observed solution is an interior point of the prior feasible region, so the IO model is expected to adjust a single constraint such that it becomes active. Applying Theorem \ref{thm:LpAlg}, we find $\bg = \bzero$ and $\bff = (1.77,1.77,1.41,2.12)$, thus $i^* = 3$. However, $b_3 = 0$ violates Assumption \ref{assumption:nominal_trivial}, and in combination with the given values of $\bah_3$ and $\bxh$, the result is that $\ba_3 = \ba^f_3 = (0,0)$ and $\bc = \bzero$. This solution effectively means that although the IO model was supposed to adjust the third constraint to become active, it does so artificially by setting all coefficients equal to zero, effectively eliminating the constraint and implying a zero cost vector, as shown in Figure \ref{fig:LpTrivialCost1}. The observed $\bxh$ remains an interior point, thus is not optimal with respect to any nonzero cost vector.

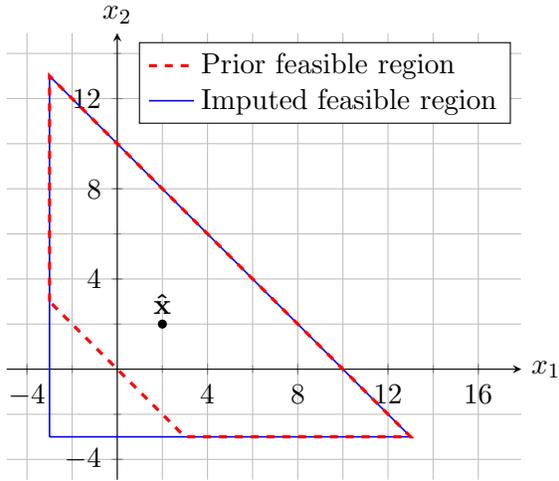
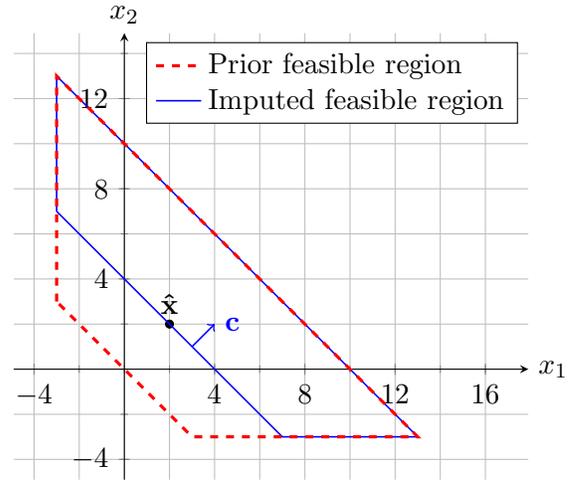
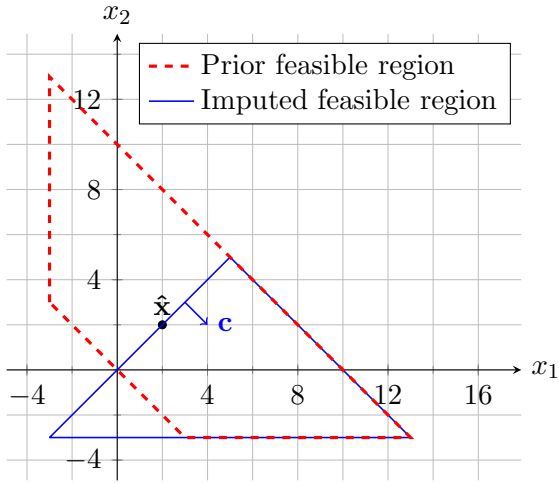
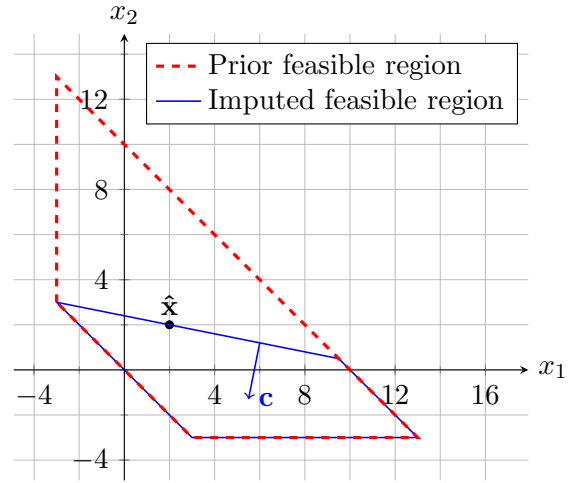
\begin{figure*}
\centering
\begin{subfigure}{0.5\textwidth}
\centering
\begin{tikzpicture}
		\begin{axis}[
			axis x line=center,
			axis y line=center,
			x=0.3cm, y=0.3cm, 
			xlabel={$x_1$},
			ylabel={$x_2$},
			xlabel style={right},
			ylabel style={above},
			xtick={-4,4,8,12,16},
			ytick={-4,4,8,12},
			minor xtick={-4,-2,0,2,...,16}, 
			minor ytick={-4,-2,...,14}, 
			grid=minor,
			legend cell align={left},
			xmin=-4.9,
			xmax=17.9,
			ymin=-4.9,
			ymax=14.9]
				
	  \filldraw (2,2) circle (1.5pt) node[above] {$\bxh$}; 
		\draw[semithick,blue] (-3,-3) -- (-3,13) -- (13,-3) -- (-3,-3); 
		\draw[very thick,red,dashed] (-3,3) -- (-3,13) -- (13,-3) -- (3,-3) -- (-3,3); 
		
		\addlegendimage{very thick,red,dashed}
		\addlegendentry{Prior feasible region}
		\addlegendimage{semithick,blue}
		\addlegendentry{Imputed feasible region}
		
		\end{axis}
\end{tikzpicture}
\caption{Trivial cost vector and constraint induced by pathological problem data.}
\label{fig:LpTrivialCost1}
\end{subfigure}~~~~ 
\begin{subfigure}{0.5\textwidth}
\centering
\begin{tikzpicture}
		\begin{axis}[
			axis x line=center,
			axis y line=center,
			x=0.3cm, y=0.3cm, 
			xlabel={$x_1$},
			ylabel={$x_2$},
			xlabel style={right},
			ylabel style={above},
			xtick={-4,4,8,12,16},
			ytick={-4,4,8,12},
			minor xtick={-4,-2,0,2,...,16}, 
			minor ytick={-4,-2,...,14}, 
			grid=minor,
			legend cell align={left}, 
			xmin=-4.9,
			xmax=17.9,
			ymin=-4.9,
			ymax=14.9]
				
	  \filldraw (2,2) circle (1.5pt) node[above] {$\bxh$}; 
		\draw[semithick,blue] (-3,7) -- (-3,13) -- (13,-3) -- (7,-3) -- (-3,7); 
		\draw[very thick,red,dashed] (-3,3) -- (-3,13) -- (13,-3) -- (3,-3) -- (-3,3); 
		\draw[->, semithick, blue] (3,1) -- (4,2) node[right] {$\bc$}; 
		
		\addlegendimage{very thick,red,dashed}
		\addlegendentry{Prior feasible region}
		\addlegendimage{semithick,blue}
		\addlegendentry{Imputed feasible region}
		
		\end{axis}
\end{tikzpicture}
\caption{Trivial solution averted with perturbed right-hand-side constraint vector.}
\label{fig:LpTrivialCost2}
\end{subfigure}
\begin{subfigure}{0.5\textwidth}
\centering
\begin{tikzpicture}
		\begin{axis}[
			axis x line=center,
			axis y line=center,
			x=0.3cm, y=0.3cm, 
			xlabel={$x_1$},
			ylabel={$x_2$},
			xlabel style={right},
			ylabel style={above},
			xtick={-4,4,8,12,16},
			ytick={-4,4,8,12},
			minor xtick={-4,-2,0,2,...,16}, 
			minor ytick={-4,-2,...,14}, 
			grid=minor,
			legend cell align={left}, 
			xmin=-4.9,
			xmax=17.9,
			ymin=-4.9,
			ymax=14.9]
				
	  \filldraw (2,2) circle (1.5pt) node[above] {$\bxh$}; 
		\draw[semithick,blue] (-3,-3) -- (5,5) -- (13,-3) -- (-3,-3); 
		\draw[very thick,red,dashed] (-3,3) -- (-3,13) -- (13,-3) -- (3,-3) -- (-3,3); 
		\draw[->, semithick, blue] (3,3) -- (4,2) node[right] {$\bc$}; 
		
		\addlegendimage{very thick,red,dashed}
		\addlegendentry{Prior feasible region}
		\addlegendimage{semithick,blue}
		\addlegendentry{Imputed feasible region}
		
		\end{axis}
\end{tikzpicture}
\caption{Trivial solution averted with perturbed prior left-hand-side constraint vector.}
\label{fig:LpTrivialCost3}
\end{subfigure}~~~~ 
\begin{subfigure}{0.5\textwidth}
\centering
\begin{tikzpicture}
		\begin{axis}[
			axis x line=center,
			axis y line=center,
			x=0.3cm, y=0.3cm, 
			xlabel={$x_1$},
			ylabel={$x_2$},
			xlabel style={right},
			ylabel style={above},
			xtick={-4,4,8,12,16},
			ytick={-4,4,8,12},
			minor xtick={-4,-2,0,2,...,16}, 
			minor ytick={-4,-2,...,14}, 
			grid=minor,
			legend cell align={left}, 
			xmin=-4.9,
			xmax=17.9,
			ymin=-4.9,
			ymax=14.9]
				
	  \filldraw (2,2) circle (1.5pt) node[above] {$\bxh$}; 
		\draw[semithick,blue] (-3,3) -- (9.5,0.5) -- (13,-3) -- (3,-3) -- (-3,3); 
		\draw[very thick,red,dashed] (-3,3) -- (-3,13) -- (13,-3) -- (3,-3) -- (-3,3); 
		\draw[->, semithick, blue] (6,1.2) -- (5.5,-1.3) node[right] {$\bc$}; 
		
		\addlegendimage{very thick,red,dashed}
		\addlegendentry{Prior feasible region}
		\addlegendimage{semithick,blue}
		\addlegendentry{Imputed feasible region}
		
		\end{axis}
\end{tikzpicture}
\caption{Trivial solution averted with perturbed objective function weights in NLO-SD.}
\label{fig:LpTrivialCost4}
\end{subfigure}
\caption{A numerical example in which NLO-SD produces a trivial cost vector and eliminates a constraint of the forward problem.}
\label{fig:LpTrivialCost}
\end{figure*}

We can circumvent this issue by perturbing the problem data in three different ways. First, we add a small value of 0.1 to $b_3$, so that Assumption \ref{assumption:nominal_trivial} is satisfied. We find that $\bg$, $\bff$, and $i^*$ have the same values as before. However, we find $\ba_3 = (0.025, 0.025)$, i.e., the third constraint is adjusted to $0.025x_1 + 0.025x_2 \geq 0.1$, which is rendered active by $\bxh$, and correspondingly $\bc = (0.025, 0.025)$ makes $\bxh$ optimal, as shown in Figure \ref{fig:LpTrivialCost2}. Second, we can instead add 0.1 to the prior estimate $\hat{a}_{31}$, in which case the third constraint is adjusted to $0.005x_1 - 0.005x_2 \geq 0$ (see Figure \ref{fig:LpTrivialCost3}). Third, we can increase the value of $\xi_3$ to 10 to force the IO model to adjust a constraint other than the third one: we find that the first constraint is adjusted to $-0.25x_1 - 1.25x_2 \geq -3$ (see Figure \ref{fig:LpTrivialCost4}).
\end{exmp}

\begin{exmp}[\textbf{NLO-SD with trivial constraint and non-trivial cost vector}]\label{ex:LpTrivialCon}

Let $\bxh$, $||\cdot||$, and $\bxi$ be the same as in Example \ref{ex:LpTrivialCost}, and let the constraint data be
\begin{align*}
\bAh =
\begin{pmatrix}
1 & 0 \\
0 & 1 \\
-1 & -1
\end{pmatrix}, \quad
\bb =
\begin{pmatrix}
2 \\
-4 \\
0
\end{pmatrix}.
\end{align*}
The problem data differs from the previous example in two respects. First, $\bxh$ is on the boundary of the first constraint of the prior feasible region, which causes NLO-SD to impute the non-trivial $\bc = \bah_1 = (1,0)$. Second, $\bxh$ is infeasible with respect to $\bah_3$, and thus the third constraint must be adjusted such that $\bxh$ becomes feasible. However, because $b_3 = 0$ and because of the given values of $\bah_3$ and $\bxh$, the result is that $\ba^g_3 = (0,0)$. In other words, the third constraint is eliminated and the feasible region is rendered unbounded, as shown in Figure \ref{fig:LpTrivialCon1}. This solution is meaningful insofar as $\bxh$ does lie on the boundary of the imputed feasible region and is therefore optimal, but it is trivial in the sense that it achieves feasibility for the third constraint simply by eliminating it entirely.

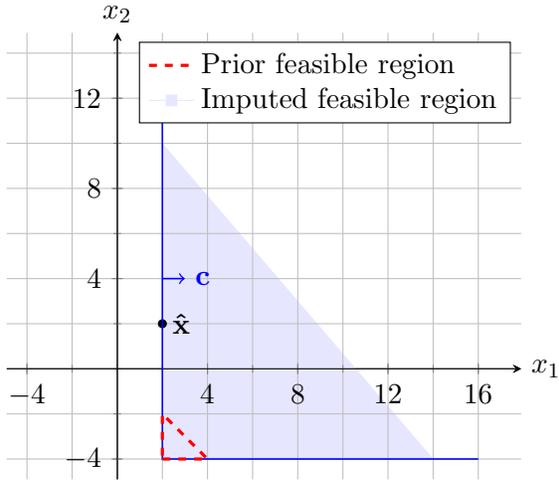
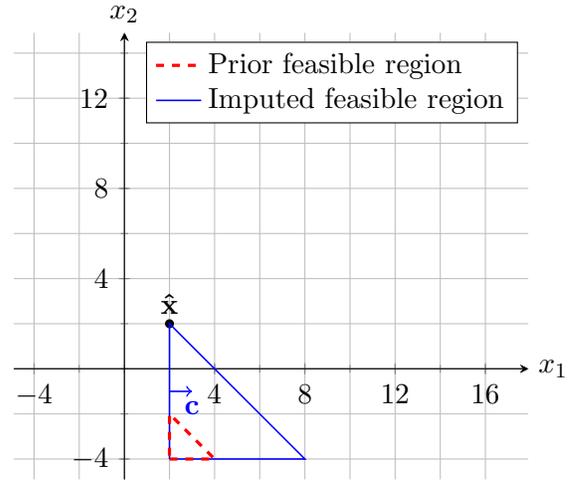
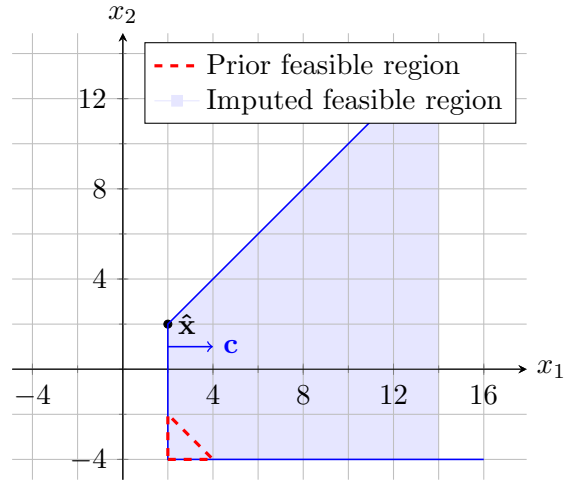
\begin{figure*}
\centering
\begin{subfigure}{0.5\textwidth}
\centering
\begin{tikzpicture}
\begin{axis}[
			axis x line=center,
			axis y line=center,
			x=0.3cm, y=0.3cm, 
			xtick={100},
			ytick={100},
			xmin=-4.9,
			xmax=17.9,
			ymin=-4.9,
			ymax=14.9]
				
		\fill[fill=blue!10!white] (2,10) -- (2,-4) -- (14,-4); 
		\end{axis}
		
		\begin{axis}[
			axis x line=center,
			axis y line=center,
			x=0.3cm, y=0.3cm, 
			xlabel={$x_1$},
			ylabel={$x_2$},
			xlabel style={right},
			ylabel style={above},
			xtick={-4,4,8,12,16},
			ytick={-4,4,8,12},
			minor xtick={-4,-2,0,2,...,16}, 
			minor ytick={-4,-2,...,14}, 
			grid=minor,
			legend cell align={left}, 
			xmin=-4.9,
			xmax=17.9,
			ymin=-4.9,
			ymax=14.9]
				
	  \filldraw (2,2) circle (1.5pt) node[right] {$\bxh$}; 
		\draw[semithick,blue] (2,14) -- (2,-4) -- (16,-4); 
		\draw[very thick,red,dashed] (2,-4) -- (4,-4) -- (2,-2) -- (2,-4); 
		\draw[->, semithick, blue] (2,4) -- (3,4) node[right] {$\bc$}; 
		
		\addlegendimage{very thick,red,dashed}
		\addlegendentry{Prior feasible region}
		\addlegendimage{mark=square*,color=blue!10!white}
		\addlegendentry{Imputed feasible region}
		
		\end{axis}
\end{tikzpicture}
\caption{Elimination of prior infeasible constraint, induced by pathological problem data.}
\label{fig:LpTrivialCon1}
\end{subfigure}~~~~ 
\begin{subfigure}{0.5\textwidth}
\centering
\begin{tikzpicture}
\begin{axis}[
			axis x line=center,
			axis y line=center,
			x=0.3cm, y=0.3cm, 
			xtick={100},
			ytick={100},
			xmin=-4.9,
			xmax=17.9,
			ymin=-4.9,
			ymax=14.9]
				
		\end{axis}
		
		\begin{axis}[
			axis x line=center,
			axis y line=center,
			x=0.3cm, y=0.3cm, 
			xlabel={$x_1$},
			ylabel={$x_2$},
			xlabel style={right},
			ylabel style={above},
			xtick={-4,4,8,12,16},
			ytick={-4,4,8,12},
			minor xtick={-4,-2,0,2,...,16}, 
			minor ytick={-4,-2,...,14}, 
			grid=minor,
			legend cell align={left}, 
			xmin=-4.9,
			xmax=17.9,
			ymin=-4.9,
			ymax=14.9]
				
	  \filldraw (2,2) circle (1.5pt) node[above] {$\bxh$}; 
		\draw[semithick,blue] (2,-4) -- (2,2) -- (8,-4) -- (2,-4); 
		\draw[very thick,red,dashed] (2,-4) -- (4,-4) -- (2,-2) -- (2,-4); 
		\draw[->, semithick, blue] (2,-1) -- (3,-1) node[below] {$\bc$}; 
		
		\addlegendimage{very thick,red,dashed}
		\addlegendentry{Prior feasible region}
		\addlegendimage{semithick,blue}
		\addlegendentry{Imputed feasible region}
		
		\end{axis}
\end{tikzpicture}
\caption{Trivial solution averted by perturbing right-hand-side constraint vector.}
\label{fig:LpTrivialCon2}
\end{subfigure}
\begin{subfigure}{0.5\textwidth}
\centering
\begin{tikzpicture}
\begin{axis}[
			axis x line=center,
			axis y line=center,
			x=0.3cm, y=0.3cm, 
			xtick={100},
			ytick={100},
			xmin=-4.9,
			xmax=17.9,
			ymin=-4.9,
			ymax=14.9]
				
		\fill[fill=blue!10!white] (14,14) -- (2,2) -- (2,-4) -- (14,-4); 
		\end{axis}
		
		\begin{axis}[
			axis x line=center,
			axis y line=center,
			x=0.3cm, y=0.3cm, 
			xlabel={$x_1$},
			ylabel={$x_2$},
			xlabel style={right},
			ylabel style={above},
			xtick={-4,4,8,12,16},
			ytick={-4,4,8,12},
			minor xtick={-4,-2,0,2,...,16}, 
			minor ytick={-4,-2,...,14}, 
			grid=minor,
			legend cell align={left}, 
			xmin=-4.9,
			xmax=17.9,
			ymin=-4.9,
			ymax=14.9]
				
	  \filldraw (2,2) circle (1.5pt) node[right] {$\bxh$}; 
		\draw[semithick,blue] (14,14) -- (2,2) -- (2,-4) -- (16,-4); 
		\draw[very thick,red,dashed] (2,-4) -- (4,-4) -- (2,-2) -- (2,-4); 
		\draw[->, semithick, blue] (2,1) -- (4,1) node[right] {$\bc$}; 
		
		\addlegendimage{very thick,red,dashed}
		\addlegendentry{Prior feasible region}
		\addlegendimage{mark=square*,color=blue!10!white}
		\addlegendentry{Imputed feasible region}
		
		\end{axis}
\end{tikzpicture}
\caption{Trivial solution averted by perturbing prior left-hand-side constraint vector.}
\label{fig:LpTrivialCon3}
\end{subfigure}
\caption{A numerical example in which NLO-SD trivializes a constraint of the forward problem.}
\label{fig:LpTrivialCon}
\end{figure*}

We consider two methods to circumvent the trivial solution. First, if we add a small value of -0.1 to $b_3$, the third constraint is instead imputed as $-0.025x_1 -0.025x_2 \geq -0.1$, which is shown in Figure \ref{fig:LpTrivialCon2}. In this example, we perturb $b_3$ with a negative rather than positive value so that the imputed $\ba_3$ points in the same rather than opposite direction as the prior $\bah_3$. Second, if we add 0.1 to $\hat{a}_{31}$, the third constraint is instead adjusted to $0.05x_1 - 0.05x_2 \geq 0$, which again renders the feasible region unbounded (see Figure \ref{fig:LpTrivialCon3}). In this example, adjusting the weights $\bxi$ would not be useful, because those parameters only help determine which constraint of the forward problem is set active, whereas the third constraint in this case is violated by $\bxh$ and therefore needs to be adjusted regardless.
\end{exmp}

\end{document}